\DeclareMathOperator*{\argmin}{arg\,min}
\newcommand{\snorm}[1]{\left\lvert#1\right\rvert}
      \numberwithin{equation}{section}
      \theoremstyle{plain}
      \newtheorem{theorem}{Theorem}[section]
      \newtheorem{lemma}[theorem]{Lemma}
      \newtheorem{proposition}[theorem]{Proposition}
         \newtheorem{question}[theorem]{Question}
      \newtheorem{observation}[theorem]{Observation}
      \theoremstyle{definition}
      \theoremstyle{remark}
      \newtheorem{remark}[theorem]{Remark}
      \newcommand{\R}{{\mathbb R}}
      \newcommand{\E}{\mathbb E}
      \newcommand{\e}{\mathrm{e}}
      \newcommand{\ep}{\epsilon}
      \renewcommand{\P}{\mathbb P}
      \newcommand{\var}{\mathrm{Var}}
      \newcommand{\cov}{\mathrm{Cov}}
      \newcommand{\cross}{\textrm{cross}}
      \newcommand{\up}{\textrm{up}}
      \newcommand{\down}{\textrm{down}}
      \newcommand{\Z}{\mathbb{Z}}
      \newcommand{\N}{\mathbb{N}}
      \newcommand{\A}{\mathcal{A}}
      \newcommand{\B}{\mathcal{B}}
      \newcommand{\ball}{\mathscr{B}}
      \newcommand{\block}{\mathrm{Block}}
      \newcommand{\main}{\mathrm{main}}
      \newcommand{\gadg}{\mathrm{gadget}}
      \newcommand{\db}{\partial_{\mathrm{down}}}
      \newcommand{\ub}{\partial_{\mathrm{up}}}      
      \newcommand{\lb}{\partial_{\mathrm{left}}}      
      \newcommand{\rb}{\partial_{\mathrm{right}}}
      \newcommand{\loss}{\mathrm{Loss}}      
      \newcommand{\switch}{\mathrm{switch}} 
      \newcommand{\lt}{\mathrm{left}}
      \newcommand{\rt}{\mathrm{right}}
      \newcommand{\Approx}{\mathrm{approx}}
      \newcommand{\APPROX}{\mathrm{Approx}}
      \newcommand{\D}{\mathbb{D}}
      \renewcommand{\S}{\mathcal{S}}
      \newcommand{\Max}{\mathrm{Max}}
      \newcommand{\diam}{\mathrm{diam}}
      \def\@setcopyright{}
      \def\serieslogo@{}
\begin{document}
\title{Upper bounds on Liouville first passage percolation \\ and Watabiki's
prediction}

\author{Jian Ding\thanks{Partially supported by NSF grant DMS-1455049, DMS-1757479 and an Alfred Sloan fellowship.} \\ University of Pennsylvania \and Subhajit Goswami\footnotemark[1]  \\
Institut des Hautes \'{E}tudes Scientifiques
}
\date{}

\maketitle
\begin{abstract}
Given a planar continuum Gaussian free field $h^{\mathcal U}$ in a domain $\mathcal U$ with Dirichlet boundary condition and any $\delta>0$, we let $\{h_\delta^{\mathcal U}(v): v\in \mathcal U\}$ be a real-valued smooth Gaussian process where $h_\delta^{\mathcal U}(v)$ is the average of $h^{\mathcal U}$ along a circle of radius $\delta$ with center $v$. For $\gamma > 0$, we study the Liouville first passage percolation (in scale $\delta$), i.e., the shortest path distance in $\mathcal U$ where the weight of each path $P$ is given by $\int_P \mathrm{e}^{\gamma h_\delta^{\mathcal U}(z)} |dz|$. We show that the distance between two typical points is $O(\delta^{c^* \gamma^{4/3}/\log \gamma^{-1}})$ for all sufficiently small but fixed $\gamma>0$ and some constant $c^* > 0$. In addition, we obtain similar upper bounds on the Liouville first passage percolation for discrete Gaussian free fields, as well as the Liouville graph distance which roughly speaking is the minimal number of Euclidean balls with comparable Liouville quantum gravity measure whose union contains a continuous path between two endpoints.  
Our results contradict with some reasonable interpretations of Watabiki's prediction (1993) on the random distance of Liouville quantum gravity at high temperatures.   

\smallskip
\noindent{\bf Key words and phrases.} Liouville quantum gravity (LQG), Gaussian free field (GFF), First passage percolation (FPP).
\end{abstract}

\section{Introduction}

In the seminal paper \cite{Kahane85}, the Gaussian multiplicative chaos was initiated and constructed as a random measure obtained from exponentiating log-correlated Gaussian fields. In the last decade, there has been extensive study on Gaussian multiplicative chaos as well as Liouville quantum gravity\footnote{We note that our convention on the terminology Liouville quantum gravity follows that in \cite{DS11}. This convention is a bit different from that adopted in Liouville field theory, and one shall be cautious about the underlying mathematical meaning of LQG when discussing in the context of Liouville field theory.}  which is an important special case of Gaussian multiplicative chaos where the underlying log-correlated field is a two-dimensional Gaussian free field. See, e.g.,  \cite{Kahane85, DS11, RV11, RV14, Shamov16, Berestycki17, RV17, APS18}. Despite much understanding on these random measures, the understanding on the random distances associated with Gaussian multiplicative chaos and Liouville quantum gravity remains elusive. In a well-known paper \cite{Watabiki93}, Watabiki made a physics prediction on the Hausdorff dimension for random distances associated with Liouville quantum gravity. The main goal of the current article is to present some bounds on the distance exponents, which as we will explain further in Section~\ref{sec:Watabiki}, seems to contradict all reasonable interpretations of Watabiki's prediction.

To present our results formally, we first introduce a number of definitions. Let $\mathcal U \subseteq \R^2$ be a bounded domain with smooth 
boundary. Let $d_{\ell_2}(S, S') = \inf_{v \in S, v' \in S'}|v - v'|$ be  the Euclidean distance 
between any two subsets $S$ and $S'$ of $\R^2$, and define $\mathcal U_\ep = \{v \in \mathcal U: d_{\ell_2}(v, \partial \mathcal U) > \ep\}$ for $\ep > 0$. In this paper we only consider domains $\mathcal U$ such that $V \coloneqq [0, 1]^2 \subseteq \mathcal U_\epsilon$ for some fixed 
$\epsilon$. Let $h^{\mathcal U}$ be a (continuum) Gaussian free field (GFF) on $\mathcal U$ with Dirichlet boundary condition. For an introduction to GFF including various formal constructions, see, e.g., \cite{S07, berestycki16}. 
Although it is not possible to make sense of $h^{\mathcal U}$ as a function on $\mathcal U$, it is regular enough so that we can interpret its Lebesgue integrals over 
sufficiently nice Borel sets in a rigorous way. In particular, for any $\delta>0$ and $v\in \mathcal U$ with $d_{\ell_2}(v, \partial \mathcal U) > \delta$,  we can take its average along the circle of radius $\delta$ around $v$, and thus obtain the \emph{circle average process} $\{h_\delta^{\mathcal U} (v): \delta>0, v \in \mathcal U, d_{\ell_2}(v, \partial \mathcal U) > \delta\}$ which is a centered Gaussian process with covariance  (below $v, v'\in \mathcal U$ and $d_{\ell_2}(v, \partial \mathcal U) > \delta>0, d_{\ell_2}(v', \partial \mathcal U) > \delta'>0$)
\begin{equation}
\label{eq:GFF_cov}
\cov(h_\delta^{\mathcal U}(v), h_{\delta'}^{\mathcal U}(v')) = \pi \int_{\partial B_\delta(v) \times \partial B_{\delta'}(v')}G_{\mathcal U}(z, z')\mu_\delta^v(dz)\mu_{\delta'}^{v'}(dz')\,,
\end{equation}
where the normalization factor $\pi$ is chosen so that the field is log-correlated, consistent with the convention in the majority of the literature.
Here $B_\delta(v)$ is the closed ball with radius $\delta$ centered at $v$, $\mu_\delta^v$ is the uniform probability measure on $\partial B_\delta(v)$ (the boundary of $B_\delta(v)$) --- an analogous interpretation for $B_{\delta'}(v')$ and $\mu_{\delta'}^{v'}$ applies. In addition, $G_{\mathcal U}(z, z')$ is the Green's function for domain 
$\mathcal U$, which is defined as
\begin{equation}
\label{eq:Green_fxn}
G_{\mathcal U}(z, z') = \int_{(0, \infty)}p_{\mathcal U}(s; z, z')ds \mbox{ for } z, z'\in \mathcal U,
\end{equation}
where $p_{\mathcal U}(s; z, z')$ is the transition density of two-dimensional Brownian motion 
killed upon exiting $\mathcal U$. More precisely, $p_{\mathcal U}(s; z, \cdot)$ is the unique (up to sets of Lebesgue measure 0) nonnegative measurable function satisfying
\begin{equation}
\label{eq:heat_kernel}
\int_{B} p_{\mathcal U}(s; z, z')dz' = P^z(W_s \in B, \tau_{\mathcal U} > s)\,
\end{equation}
for all Borel measurable subsets $B$ of $\R^2$. Here $P^z(\cdot)$ is the law of the two-dimensional standard Brownian motion $\{W_t\}_{t \geq 0}$ (see \eqref{eq:field_definition} for the formula of its transition density)  starting from $z$ and $\tau_{\mathcal U}$ is the exit time of $\{W_t\}_{t \geq 
0}$ from $\mathcal U$. It was shown in \cite{DS11} that there exists a version of the circle average process which is jointly H\"{o}lder continuous in $v$ and $\delta$ of order $\vartheta < 1/2$ on all compact subsets of $\{(v, \delta): v \in \mathcal U, 0 < \delta < d_{\ell_2}(v, 
\partial \mathcal U)\}$. Given such an instance of $h_\delta^{\mathcal U}$ and a fixed inverse-temperature parameter $\gamma > 0$, we define the \emph{Liouville first-passage percolation (Liouville FPP or LFPP) distance} $D_{\gamma, \delta}^{\mathcal U}(\cdot, \cdot)$ on $V$ by
\begin{equation}
\label{eq:Liouville_metric}
D_{\gamma, \delta}^{\mathcal U}(v, w) = \inf_{P} \int_{P}\e^{\gamma h_\delta^{\mathcal U}(z)}|dz| \coloneqq \inf_{P} \int_{[0, 1]}\e^{\gamma h_\delta^{\mathcal U}(P(t))}|P'(t)|dt\,,
\end{equation}
where $P: [0, 1] \to V$ ranges over all piecewise $C^1$ 
paths in $V$ connecting $v$ and $w$. The infimum is well-defined and measurable since we are dealing with a continuous process 
on a compact space. In fact $D_{\gamma, \delta}^{\mathcal U}(\cdot, \cdot)$ does not change if we restrict only to $C^1$ paths. 
\begin{theorem}
\label{thm:main}
Let $V \subseteq \mathcal U_\ep$ for some $\ep > 0$. Then there exist $\delta_{\gamma, \mathcal U, \epsilon} > 0$ (depending on $(\gamma, \mathcal U, \epsilon)$) and positive (small) absolute constants $c^*, \gamma_0$ such that for all $0<\gamma \leq \gamma_0$ and $0<\delta \leq \delta_{\gamma, \mathcal U, \epsilon}$, we have
$$\max_{v, w \in V}\E D_{\gamma, \delta}^{\mathcal U}(v, w) \leq  \delta^{c^*\frac{\gamma^{4/3}}{\log \gamma^{-1}}}\,.$$
\end{theorem}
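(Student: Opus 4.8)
The plan is to establish a left-right crossing estimate by a multi-scale recursion on boxes and then upgrade it to the stated point-to-point bound. Fix a large integer $K$, to be optimized as a function of $\gamma$ at the very end. By the domain Markov property of the GFF, on any axis-parallel box $Q\subseteq\mathcal U$ of side $s$ one decomposes $h^{\mathcal U}_\delta|_Q=\phi_Q+h^Q_\delta$, where $\phi_Q$ is a smooth (harmonic-average) Gaussian field carrying the coarse scales $\gtrsim s$ and $h^Q_\delta$ is, after rescaling $Q$ to $V=[0,1]^2$, distributed like the scale-$\delta/s$ circle-average field on $V$ up to errors controlled by Gaussian comparison. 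Thanks to the approximate spatial scale-invariance of this structure, the only relevant parameter is the number of scales separating the box side from the mollification scale; write $F(j)$ for an upper bound, to be built recursively, on the expected LFPP distance between the left and right sides of $V$ when the metric is built from the scale-$K^{-j}$ circle-average field. The theorem then reduces to showing $F(m)\le\delta^{c^*\gamma^{4/3}/\log\gamma^{-1}}$ with $\delta=K^{-m}$: for any $v,w\in V$ one bounds $D_{\gamma,\delta}^{\mathcal U}(v,w)$ by a bounded number of left-right and top-bottom crossings of $V$ and of dyadic sub-boxes of $V$, whence $\max_{v,w}\E D_{\gamma,\delta}^{\mathcal U}(v,w)\lesssim F(m)$, and the implicit constant is absorbed into $c^*$.

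For the recursion, subdivide $V$ into $K^2$ congruent sub-boxes $Q'$ of side $1/K$. Any left-right crossing of $V$ is a concatenation of crossings of a connected chain of sub-boxes, and on $Q'$ the decomposition above has $\phi(Q')$ essentially Gaussian of variance $\asymp\log K$ and a fine part which, rescaled, is an independent copy of the scale-$K^{-(j-1)}$ field. This yields, schematically,
\[
F(j)\ \lesssim\ \E\Big[\inf_{\text{chain}}\ \sum_{Q'\in\text{chain}}\e^{\gamma\,\phi(Q')}\,D^{Q'}\Big],
\]
where the $D^{Q'}$ are independent copies of the rescaled sub-box crossing distance (so $\E D^{Q'}\le\tfrac1K F(j-1)$), independent also of the coarse fields, and the infimum is over chains of sub-boxes joining the two sides. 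Using a straight chain of $K$ sub-boxes together with $\E\e^{\gamma\phi}=K^{\gamma^2/2}$ gives only the trivial bound $F(j)\lesssim K^{\gamma^2/2}F(j-1)$, i.e.\ $F(m)\lesssim\delta^{-\gamma^2/2}$, which blows up; the content of the theorem is that one may route the chain through sub-boxes on which $\phi(Q')$ is atypically small. A first attempt -- treating the $\phi(Q')$ as independent, using a percolation argument to cross through $\{\phi\le\ell\}$ while it is super-critical, and paying a multiplicative constant per scale for the length of a near-critical crossing -- saves only a bounded power of $K$ per scale and, after optimizing $K$, yields at best an exponent of order $\gamma^2$, which is weaker than what is claimed. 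So a genuinely multi-scale routing is required.

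This refined routing is the heart of the argument and, I expect, the main obstacle. The idea is to group $r$ consecutive levels of subdivision into a single super-step, so that across a super-step one subdivides into $K^{2r}$ tiny boxes and the accumulated coarse field felt on a tiny box has standard deviation $\asymp\sqrt{r\log K}$; one then performs the percolation/routing at the level of the super-step, crossing through the set of tiny boxes whose accumulated coarse field lies a bounded number of standard deviations below typical. One must simultaneously control three competing effects: (i) the multiplicative saving of order $\e^{-\Theta(\gamma\sqrt{r\log K})}$ per super-step from the reduced coarse field along the crossing; (ii) the length of the crossing chain, which enters only through its logarithm (the chemical distance of a near-critical crossing cluster grows at most polynomially in the distance to the critical level); and (iii) the tiny boxes along the chain that are forced to lie outside the favourable level set, which are not charged their full weight but are recursed into at the finer scales. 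Optimizing the super-step length $r$, the branching $K$, and the level all together is what produces the exponent $\gamma^{4/3}/\log\gamma^{-1}$; the $\log\gamma^{-1}$ in the denominator is an artefact of the detour and union-bound losses in this optimization and is not expected to be sharp. Carrying this out cleanly also requires controlling the oscillation of the coarse field within a tiny box (via its modulus of continuity), with the resulting errors absorbed by taking $K$ large.

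Finally one iterates the recursion $m=\log_K\delta^{-1}$ times. The per-step multiplicative errors -- from the harmonic-extension corrections in the Markov decomposition, from the discrepancy between the circle-average field and the exactly scale-invariant comparison field, and from the routing detours -- must be shown to remain under control when raised to the $m$-th power; this is precisely what forces $K$ to be large (in a range compatible with the optimization above), so that each such error is only $K^{o(1)}$ and contributes a harmless $\delta^{o(1)}$ overall. The remaining ingredients are standard: Kahane's convexity inequality (or Slepian's lemma) for the Gaussian comparison replacing $h^{\mathcal U}_\delta$ by a field with cleaner independence across scales, the H\"older-continuity and variance estimates for the circle-average process from \cite{DS11}, and the crossing-to-point bound via chaining. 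I expect all of this to be routine multi-scale bookkeeping; the substantive difficulty is the refined routing lemma and its interface with the recursion.
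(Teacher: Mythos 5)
Your high-level architecture (multi-scale recursion on box crossings, Gaussian comparison with a cleaner scale-invariant field, crossing-to-point chaining) does match the paper's, but the engine you propose for beating the trivial bound --- routing each super-step through the tiny boxes where the accumulated coarse field is atypically low, percolation-style --- is a genuinely different mechanism from the paper's, and it is exactly there that the argument has a gap. The issue is quantitative. If a super-step spans $r$ levels, the coarse field on a tiny box has standard deviation $\sigma=\sqrt{r\log K}$, and you route through $\{\phi\le -\lambda\sigma\}$, then the multiplicative gain per super-step is $\e^{-\Theta(\gamma\lambda\sigma)}$ while the crossing of a level set of non-full density is longer than the straight chain by a multiplicative factor $1+a(\lambda)$ with $a(\lambda)$ bounded below by an absolute constant whenever the level set is genuinely selective (and $a(\lambda)\to 0$ only as the level set fills up, at which point the gain vanishes; the same cap appears if one instead accounts the gain as ``avoiding the high boxes''). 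Iterating over $m/r$ super-steps, the exponent you extract is $\sup_{\lambda,r,K}\,\big[\gamma\lambda\sqrt{r\log K}-\log(1+a(\lambda))\big]/(r\log K)\le\sup_{\lambda}\gamma^{2}\lambda^{2}/\big(4\log(1+a(\lambda))\big)=O(\gamma^{2})$. The super-step grouping rescales the gain and the number of switching opportunities in lockstep and does not improve the gain-to-loss \emph{ratio}, so you land back at the $O(\gamma^{2})$ you already conceded for the single-scale version; your remark that the chemical distance ``enters only through its logarithm'' is where this is hidden, since the length overhead is multiplicative per super-step and hence contributes linearly, not logarithmically, to the final exponent. (This is precisely the obstruction the authors describe for their earlier argument in \cite{DG16}, where gain and loss are both of order $\gamma^{2}$.)

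The missing idea is a switching gadget whose length overhead is $o(1)$ \emph{relative}, not a constant factor. The paper runs two horizontal strips at heights $1/4$ and $3/4$, cut into segments of length $\beta$, and switches between them via a \emph{sloped} connecting rectangle: by Pythagoras the sloped connector exceeds the straight segment in length by only $O(\beta^{-1})$, i.e.\ a relative deterministic loss $O(\beta^{-2})$, plus a Gaussian fluctuation cost $O(\gamma\sqrt{\beta})$ per switch, while the gain from choosing the better of the two strips over $G$ consecutive segments is $\Omega(\gamma\sqrt{G\beta})$ (the expected absolute difference of two integrated coarse fields). Balancing $\gamma\sqrt{G\beta}\gtrsim\beta^{-1}+\gamma\sqrt{\beta}$ and optimizing gives $\beta=\gamma^{-2/3}$ and a relative saving $\Omega(\gamma^{4/3})$ per $\Theta(\log\gamma^{-1})$ dyadic scales, whence the exponent $c^{*}\gamma^{4/3}/\log\gamma^{-1}$; the rest of the paper (white-noise decomposition approximating the circle average, tying gadgets, the recursion for $\lambda_{\gamma,n}$) is the bookkeeping you anticipate. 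Without an ingredient of this kind your optimization cannot produce an exponent better than order $\gamma^{2}$, so the proposal as written does not prove the theorem.
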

Another related notion of random distance comes from the \emph{Liouville quantum gravity} (LQG) measure
$M_\gamma^{\mathcal U}$ on $\mathcal U$ (which, as described at the beginning of the introduction, is an important special case of Gaussian multiplicative chaos). For any $0<\gamma < 2$, $M_\gamma^{\mathcal U}$ is defined {in \cite{DS11}} as the almost sure weak limit of the sequence of measures $M_{\gamma, n}^{\mathcal U}$ given by
\begin{equation}\label{eq-limit-LQG}
M_{\gamma, n}^{\mathcal U} = \e^{\gamma h_{2^{-n}}^{\mathcal U}(z)}2^{-n\gamma^2/2}\sigma(dz),
\end{equation}
where $\sigma$ is the Lebesgue measure. Much on the LQG measure (and in general Gaussian multiplicative chaos) has been understood (see e.g., \cite{Kahane85, DS11, RV11, RV14, Shamov16, Berestycki17}) including the existence of the limit in \eqref{eq-limit-LQG}, the uniqueness in law for the limiting measure via different approximation schemes, as well as a KPZ correspondence
through a uniformization of the random lattice seen as a Riemann surface. Our focus in the present article is on the random distance
associated with the LQG. Given $\delta \in (0, 1)$, we say a closed Euclidean ball $B \subseteq \mathcal U$ is a $(M_\gamma^{\mathcal U}, \delta)$-ball if $M_\gamma^{\mathcal U}(B) \leq \delta^2$ and the center of $B$ is rational (to avoid unnecessary measurability 
consideration). We then define the \emph{Liouville graph distance} $\tilde D_{\gamma, \delta}^{\mathcal U}(v, w)$ between any two \emph{distinct} points $v, w \in V$ as the minimum number of $(M_\gamma^{\mathcal U}, \delta)$ 
balls whose union contains a path between $v$ and $w$. In addition, we set  $\tilde D_{\gamma, \delta}^{\mathcal 
U}(v, v) = 0$ for all $v \in V$. We name this distance as Liouville graph distance since it corresponds to the shortest path distance on a graph indexed by $\mathbb Q^2$ where adjacency relation corresponds to the intersection 
of $(M_\gamma^{\mathcal U}, \delta)$ balls. A very related graph distance was mentioned in \cite{MS15} which proposed to keep dividing each square until the LQG measure is 
below $\delta$. We chose our notion of Liouville graph distance for the reason that it seems to have more desirable invariant properties, although we expect our bound (as well as our proof) to extend to the other notion as well. 
\begin{theorem}
\label{thm:LQG}
Under the same conditions as in Theorem~\ref{thm:main}, there exist $\delta_{\gamma, \mathcal U, \epsilon} > 0$ (depending on $(\gamma, \mathcal U, \epsilon)$) and positive (small) absolute constants $c^*, \gamma_0$ such that for all $0<\gamma \leq \gamma_0$ and $0<\delta \leq \delta_{\gamma, \mathcal U, \epsilon}$, 
$$\max_{v, w \in V}\E \tilde D_{\gamma, \delta}^{\mathcal U}(v, w) \leq \delta^{-1 + c^*\frac{\gamma^{4/3}}{\log \gamma^{-1}}}\,.$$
\end{theorem}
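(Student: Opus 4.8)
The plan is to deduce Theorem~\ref{thm:LQG} from Theorem~\ref{thm:main} by matching the two notions of distance through the multifractal structure of the LQG measure. Write $Q=\frac{2}{\gamma}+\frac{\gamma}{2}$, $\xi=1/Q$, $a_0=\frac{2}{\gamma Q}=\frac{4}{4+\gamma^2}$ (so $\gamma Q=2+\frac{\gamma^2}{2}$, $\xi=\frac{\gamma}{2}(1+O(\gamma^2))$, $a_0=1+O(\gamma^2)$), let $o(1)$ denote a quantity $\to0$ as $\delta\to0$ for fixed $\gamma$, and let $c^*_0$ be the constant $c^*$ of Theorem~\ref{thm:main}. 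The guiding heuristic is the standard fact that $M_\gamma^{\mathcal U}(B_r(z))$ is, for small $r$, comparable to $r^{\gamma Q}\e^{\gamma h_r^{\mathcal U}(z)}$ up to a random factor with finite moments of every order below $4/\gamma^2$; hence the largest ball around $z$ of LQG mass $\delta^2$ has radius $\rho(z)\approx\delta^{a_0}\e^{-\xi h_{\rho(z)}^{\mathcal U}(z)}$, so that $\rho(z)^{-1}\approx\delta^{-a_0}\e^{\xi h_{\delta^{a_0}}^{\mathcal U}(z)}$, which up to the global factor $\delta^{-a_0}$ is exactly the density of the Liouville metric at circle-average scale $\delta^{a_0}$. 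Covering a curve greedily by $(M_\gamma^{\mathcal U},\delta)$-balls should therefore cost $\approx\delta^{-a_0}\,D_{\xi,\delta^{a_0}}^{\mathcal U}(v,w)$, and Theorem~\ref{thm:main} (applied at inverse temperature $\xi\le\gamma_0$, scale $\delta^{a_0}$) gives the exponent $-a_0(1-c^*_0\xi^{4/3}/\log\xi^{-1})$; since $a_0=1+O(\gamma^2)$, $\xi^{4/3}/\log\xi^{-1}\sim2^{-4/3}\gamma^{4/3}/\log\gamma^{-1}$ as $\gamma\to0$, and $\gamma^2\log\gamma^{-1}=o(\gamma^{4/3})$, this equals $-1+\Theta(\gamma^{4/3}/\log\gamma^{-1})$. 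The ``$\delta^{-1}$'' of the statement is thus a volume factor --- each ball has Euclidean radius $\delta^{a_0+o(1)}=\delta^{1+o(1)}$, so a unit-length path meets $\approx\delta^{-1}$ of them --- and the correction is the LFPP correction at inverse temperature $\approx\gamma/2$.

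Only the easy (upper-bound) direction of this comparison is needed. Fix $\eta=\delta^{a'}$ with $a'=a_0+\theta$ for a small margin $\theta>0$, and set $r(z)=\eta\,\e^{-\xi h_\eta^{\mathcal U}(z)}$. First, a \emph{uniform concentration estimate}: with probability $\ge1-\delta^{10}$, the ball $B_{r(z)}(z)$ (after a harmless perturbation of its centre to $\mathbb Q^2$) is a legitimate $(M_\gamma^{\mathcal U},\delta)$-ball for all $z$ in a net of $V$ of mesh a fixed power of $\delta$; this follows by writing $M_\gamma^{\mathcal U}(B_r(z))=\e^{\gamma h_\eta^{\mathcal U}(z)}M_\gamma^{B_\eta(z)}(B_r(z))\,\delta^{\pm o(1)}$ with independent factors (Markov property of the circle-average process), the scaling $M_\gamma^{B_\eta(z)}(B_r(z))\overset{d}{=}\eta^{\gamma Q}M_\gamma^{B_1}(B_{r/\eta})$, and Markov's inequality with a GMC moment of high order $q$ together with a Gaussian tail for $h_\eta^{\mathcal U}(z)$, giving a per-point failure probability $\lesssim\delta^{-\gamma^4q^4a'/(8Q^2)+2\theta q/a_0\pm o(1)}$ which, optimized over $q$ of order $\gamma^{-3/2}$ (still subcritical, i.e.\ $<4/\gamma^2$), beats the number $\delta^{-O(1)}$ of net points once $\theta$ is of order $\gamma^{3/2}$. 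Second, a \emph{greedy cover}: let $P^*$ be almost optimal for $D_{\xi,\eta}^{\mathcal U}(v,w)$ and take the maximal $(M_\gamma^{\mathcal U},\delta)$-balls along it --- their radii are $\ge r(\cdot)$ on the good event and depend $1$-Lipschitz-ly on the centre, and each absorbs a sub-arc of $P^*$ of Euclidean length $\ge$ its own radius --- so that, on the good event,
\[
\tilde D_{\gamma,\delta}^{\mathcal U}(v,w)\ \le\ \#\{\text{balls}\}\ \le\ C\!\int_{P^*}\!\frac{|dz|}{r(z)}\ =\ C\,\delta^{-a'}\!\int_{P^*}\!\e^{\xi h_\eta^{\mathcal U}(z)}\,|dz|\ =\ C\,\delta^{-a'}D_{\xi,\eta}^{\mathcal U}(v,w)\,.
\]
On the complementary event ($\le\delta^{10}$) a crude a-priori bound (iterate the moment estimates to cover all of $V$ by $(M_\gamma^{\mathcal U},\delta)$-balls of a fixed tiny radius) gives $\tilde D_{\gamma,\delta}^{\mathcal U}\le\delta^{-C}$ off an event of probability $\le\delta^{C'}$ with $C'$ arbitrarily large, so the bad events contribute negligibly to the expectation. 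Taking expectations and applying Theorem~\ref{thm:main} with $(\xi,\delta^{a'})$ in place of $(\gamma,\delta)$ (valid once $\delta$ is small) gives $\E\tilde D_{\gamma,\delta}^{\mathcal U}(v,w)\le C\,\delta^{-a'}(\delta^{a'})^{c^*_0\xi^{4/3}/\log\xi^{-1}}$; since $a'=a_0+O(\gamma^{3/2})=1+O(\gamma^{3/2})$ and $\gamma^{3/2}=o(\gamma^{4/3}/\log\gamma^{-1})$, the exponent is $-1+\Theta(\gamma^{4/3}/\log\gamma^{-1})$, and choosing $c^*$ a small fixed multiple of $2^{-4/3}c^*_0$ and then $\delta_{\gamma,\mathcal U,\epsilon}$ small enough (depending on $\gamma,\mathcal U,\epsilon$) to absorb $C$ and the implicit $\delta^{o(1)}$ losses yields $\E\tilde D_{\gamma,\delta}^{\mathcal U}(v,w)\le\delta^{-1+c^*\gamma^{4/3}/\log\gamma^{-1}}$.

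The \emph{main obstacle} is the uniform concentration estimate, and in particular that the safety margin $\theta=a'-a_0$ can be kept down to $o(\gamma^{4/3}/\log\gamma^{-1})$: a margin of order $1$, or even of order $\gamma$, would push the exponent past $-1$ and destroy the bound. Achieving $\theta=O(\gamma^{3/2})$ forces the use of GMC moments of unbounded order ($\asymp\gamma^{-3/2}$, still subcritical) and a careful exploitation of the scale-decomposition and Gaussian structure of the circle-average field. Handling points $z$ with $h_\eta^{\mathcal U}(z)<0$ (where $r(z)>\eta$ and one conditions at a coarser scale), the passage to rational centres, the $1$-Lipschitz property of the maximal-ball radius, and the a-priori bound on the bad event are further --- but routine --- technicalities.
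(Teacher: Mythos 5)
Your overall route coincides with the paper's: bound $\tilde D_{\gamma,\delta}^{\mathcal U}$ by covering the geodesic of an auxiliary LFPP at inverse temperature $\approx\gamma/2$ with $(M_\gamma^{\mathcal U},\delta)$-balls, and then invoke Theorem~\ref{thm:main}; your exponent bookkeeping ($a_0=1+O(\gamma^2)$, $\xi=\gamma/2+O(\gamma^3)$, losses of order $\gamma^{3/2}$ being $o(\gamma^{4/3}/\log\gamma^{-1})$) is correct. Where you genuinely differ is in how the covering cost is controlled, and that is exactly where your argument is incomplete. The paper never guesses the correct ball radius at each point. It fixes the radius ($2^{-(k+2)}$ with $2^{-k}=\delta'\asymp\delta^{1-2C_2\gamma}$) and makes the \emph{mass threshold} field-dependent ($\delta^2\e^{-\gamma h_{\delta'}^{\mathcal U}(c(S))}\e^{-2C_2\gamma\sqrt{\log(1/\delta')}}$); it covers each fixed segment using only the crude second-moment bound of Proposition~\ref{prop:second_moment}, whose exponent is off by $O(\gamma)$, and renders that loss harmless by (i) confining it to the narrow window of scales $[\delta,\delta']$, of logarithmic width $2C_2\gamma\log(1/\delta)$, so the final exponent loses only $O(\gamma^2)$, and (ii) building the compensating factor into the lattice LFPP weight $\e^{\gamma(1+C\gamma)h/2}$ of Lemma~\ref{lem:star_lfpp}, so that the field-dependent part of the covering cost is exactly the LFPP distance. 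The geodesic is decoupled from the covering by conditioning on $\mathfrak F_{\delta'}$ (Markov field property plus Observation~\ref{observ:covering}), which is why only per-segment moment bounds are needed rather than any uniform statement.

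Your one-scale greedy cover replaces all of this by a single uniform concentration estimate: for every $z$ in a net, the ball of radius $r(z)=\eta\e^{-\xi h_\eta^{\mathcal U}(z)}$ has $M_\gamma^{\mathcal U}$-mass at most $\delta^2$. This is the crux, you flag it yourself as the main obstacle, and it is asserted rather than proved. The parameter choices are internally consistent --- the $q^4$ in your tail exponent is in fact correct, arising from integrating the multifractal correction $\e^{\gamma^2q^2 h_\eta(z)/(2Q)}$ against the Gaussian law of $h_\eta(z)$, whose variance is $\approx a'\log(1/\delta)$, and optimizing then gives $q\asymp\gamma^{-3/2}$, $\theta\asymp\gamma^{3/2}$ as you claim. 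But turning the sketch into a proof requires at least: (i) a quantitative moment bound $\E[M_\gamma^{\D}(B_s)^q]\le C_{q,\gamma}\,s^{(2+\gamma^2/2)q-\gamma^2q^2/2}$ for $q\asymp\gamma^{-3/2}$, a much stronger input than the fourth moment the paper uses; (ii) repairing the independence claim in $M_\gamma^{\mathcal U}(B_{r}(z))\approx\e^{\gamma h_\eta(z)}M_\gamma^{\mathcal U,B^{**}}(B_r(z))$ --- the two factors are \emph{not} independent, since $h_\eta(z)$ contains the component $h_\eta^{\mathcal U,B^{**}}(z)$ of the inner field, so one must pass to the harmonic part $\varphi^{\mathcal U,B^{**}}(z)$ and absorb the $O(1)$-variance discrepancy uniformly over the net, exactly as in \eqref{eq-def-C-2}; and (iii) the points with $h_\eta(z)<0$, where $r(z)>\eta$ and the decomposition must be redone at the random coarser scale $r(z)$, with the drift of the circle average between scales $\eta$ and $r(z)$ controlled. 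None of these appears fatal, but as written the theorem rests on an unproved key lemma, whereas the paper's two-scale construction avoids needing any such uniform estimate.
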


Finally, it is as natural to consider Liouville FPP for discrete GFF (which was explicitly mentioned in \cite{Benjamini}).
Given a two-dimensional box $V_N \subseteq \Z^2$ of side length $N$, the discrete GFF $\{\eta_{N, v}: v\in V_N\}$ with Dirichlet
boundary condition is a mean-zero
Gaussian process such that 
$$\eta_{N, v}=0
\mbox{ for all } v\in \partial V_N\,, \mbox{ and }\E \eta_{N, v} \eta_{N, w}=G_{V_N}(v,w)
\mbox{ for all } v, w\in V_N\,.$$ Here $\partial V_N = \{v  \in V_N: v\mbox{ has a neighbor in } \Z^2 \setminus V_N\}$ and $G_{V_N}(v, w)$ is the Green's function of simple random walk on $V_N$, i.e., the expected number of visits to $w$ for the simple random walk on $\Z^2$ started 
at $v$ and killed upon reaching $\partial V_N$. As before, for a fixed inverse-temperature parameter $\gamma > 0$, we define the Liouville FPP distance $D_{\gamma, N}(\cdot, \cdot)$ on $V_N$ by
\begin{equation}\label{eq-LFPP-def}
D_{\gamma, N}(v, w)=\mathbf 1_{\{v \neq w\}}\min_{P}\sum_{u \in P}e^{\gamma \eta_{N, u}} \mbox{ for  } v, w\in V_N\,,
\end{equation}
where $P$ ranges over all paths in $V_N$ connecting 
$v$ and $w$ and $\mathbf 1_{\{.\}}$ is the indicator 
function. As we explain in Section~\ref{sec-discrete}, the proof of Theorem~\ref{thm:main} can be adapted to derive the following result.
\begin{theorem}\label{thm-discrete}
Given any fixed $0 < \epsilon < 1 / 2$, there exists $N_{\gamma, \epsilon} \in \N$ 
(depending on $(\gamma, \epsilon)$) 
and positive (small) absolute constant $c^*, \gamma_0$ such that for all $0<\gamma \leq \gamma_0$ and $N \geq N_{\gamma, \epsilon}$, we have 
$$ 
\max_{v, w\in V_{N, \epsilon}}\E D_{\gamma, N}(v, w) \leq N^{1 - c^* \frac{\gamma^{4/3}}{\log \gamma^{-1}}},$$
where $V_{N, \epsilon}$ is the square
$\{v \in V_N: d_\infty(v, \partial V_N) \geq \epsilon N \}$ and $d_\infty((u_1, u_2), (v_1, v_2)) = \max(|u_1 - v_1|, |u_2 - v_2|)$.
\end{theorem}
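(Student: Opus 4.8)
\medskip
\noindent\textit{Proof proposal.} The plan is to deduce Theorem~\ref{thm-discrete} from Theorem~\ref{thm:main} by regarding the discrete GFF on $V_N$ as a mesoscopic regularization of the circle-average process at scale $1/N$. Rescale $V_N$ to the unit square via $v\mapsto v/N$, so that $V_{N,\epsilon}$ is mapped into the bulk $[\epsilon,1-\epsilon]^2$. Fix a bounded smooth domain $\mathcal U$ matched to the rescaled box (see below), set $\delta=1/N$, and note that for $N$ large one has $\delta\leq \delta_{\gamma,\mathcal U,\epsilon}$, so Theorem~\ref{thm:main} --- whose proof in fact bounds $\E D^{\mathcal U}_{\gamma,\delta}(x,y)$ for any two points with $d_{\ell_2}(x,\partial\mathcal U),d_{\ell_2}(y,\partial\mathcal U)\geq\epsilon$ --- gives
\[
\E D^{\mathcal U}_{\gamma,1/N}(x,y)\ \leq\ N^{-c^*\gamma^{4/3}/\log\gamma^{-1}}\qquad\text{for all }x,y\text{ in the bulk.}
\]

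The first step is a coupling. Using the classical comparison between the discrete GFF and the circle-average process, one realizes $\eta_N$ and $h^{\mathcal U}$ on a common probability space so that, after rescaling coordinates,
\[
\eta_{N,\lfloor N x\rfloor}\ =\ \kappa\,h^{\mathcal U}_{1/N}(x)\ +\ \phi_N(x),
\]
where $\kappa>0$ is the fixed constant reconciling the two normalization conventions (being a constant it merely rescales $\gamma$ by $\kappa$, which alters $\gamma^{4/3}/\log\gamma^{-1}$ only by a constant factor absorbed into $c^*$; henceforth $\kappa=1$), and $\phi_N$ is a centered Gaussian field \emph{independent} of $h^{\mathcal U}$ with $\sup_x\var\phi_N(x)=O_\epsilon(1)$ uniformly in $N$. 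The independence comes from a standard decomposition of the two covariance operators, and the uniform variance bound requires the domain $\mathcal U$ to be matched to $V_N$ so that the two fields have the same order of magnitude up to the boundary; this boundary matching (e.g.\ taking $\mathcal U$ to be a smoothing of the unit square and using the corresponding bulk version of Theorem~\ref{thm:main}, or else keeping $\mathcal U$ large and arguing separately that a continuum near-geodesic between bulk points need not enter an $O(1/N)$-neighborhood of $\partial[0,1]^2$) is the one genuinely delicate bookkeeping point.

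The second step is a pathwise comparison. Given $v,w\in V_{N,\epsilon}$ with rescaled images $x,y$, pick a piecewise-$C^1$ path $P$ in the unit square from $x$ to $y$ with $\int_P e^{\gamma h^{\mathcal U}_{1/N}(z)}\,|dz|\leq 2D^{\mathcal U}_{\gamma,1/N}(x,y)$ (one may also require $\mathrm{length}(P)\leq N^{100}$, which costs nothing since $e^{\gamma h^{\mathcal U}_{1/N}}$ is bounded below by $N^{-o(1)}$ on the event $\max|h^{\mathcal U}_{1/N}|=O(\sqrt{\log N})$ of overwhelming probability, while the straight segment has bounded weighted length), and discretize it to a nearest-lattice-point path $\tilde P$ in $V_N$ visiting $O(N\cdot\mathrm{length}(P))$ vertices, each within $O(1/N)$ of a point of $P$. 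From $D_{\gamma,N}(v,w)\leq\sum_{u\in\tilde P}e^{\gamma\eta_{N,u}}=\sum_{u\in\tilde P}e^{\gamma\phi_N(u/N)}e^{\gamma h^{\mathcal U}_{1/N}(u/N)}$, condition on $h^{\mathcal U}$ (which determines $\tilde P$) and integrate out the independent $\phi_N$ term by term: each term has conditional mean $e^{\gamma^2\var\phi_N(u/N)/2}e^{\gamma h^{\mathcal U}_{1/N}(u/N)}=O(1)\cdot e^{\gamma h^{\mathcal U}_{1/N}(u/N)}$ since $\var\phi_N=O(1)$ --- crucially we do \emph{not} take expectations of the $h^{\mathcal U}_{1/N}$ values, as the smallness lives entirely in the choice of $P$. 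An exponential-moment modulus-of-continuity estimate for $h^{\mathcal U}_\delta$ at the single scale $\delta=1/N$ (using $\var(h_\delta(z)-h_\delta(z'))=O(|z-z'|/\delta)$ for $|z-z'|\leq\delta$, so the oscillation over each $O(1/N)$-arc has $O(1)$ variance and its maximum over the $\leq N^{O(1)}$ arcs is $O(\sqrt{\log N})$) then yields $\sum_{u\in\tilde P}e^{\gamma h^{\mathcal U}_{1/N}(u/N)}\leq N^{o(1)}\cdot N\int_P e^{\gamma h^{\mathcal U}_{1/N}(z)}\,|dz|$. Taking expectations,
\[
\E D_{\gamma,N}(v,w)\ \leq\ N^{1+o(1)}\,\E\!\int_P e^{\gamma h^{\mathcal U}_{1/N}(z)}\,|dz|\ \leq\ 2N^{1+o(1)}\,\E D^{\mathcal U}_{\gamma,1/N}(x,y)\ \leq\ N^{\,1-c^*\gamma^{4/3}/\log\gamma^{-1}+o(1)},
\]
and since $N^{o(1)}\leq N^{\frac12 c^*\gamma^{4/3}/\log\gamma^{-1}}$ for $N\geq N_{\gamma,\epsilon}$, replacing $c^*$ by $\tfrac12 c^*$ gives the claim.

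The main obstacle is the coupling step: producing a discrete/continuum coupling whose error field has variance bounded uniformly in $N$ \emph{up to the relevant boundary}, which forces one to align the discrete box with the continuum domain rather than placing it strictly inside a larger one, and then to dispose of the remaining boundary-layer effects. A secondary technical point is the exponential-moment oscillation estimate for $h^{\mathcal U}_\delta$ at the single scale $\delta$ that converts the lattice sum along $\tilde P$ into the integral along $P$. Alternatively --- and this is closer to the phrasing that ``the proof of Theorem~\ref{thm:main} can be adapted'' --- one bypasses the coupling altogether and re-runs the multi-scale path construction underlying Theorem~\ref{thm:main} directly for the discrete GFF, replacing circle averages by $\eta_{N,\cdot}$, length integrals by vertex sums, and the continuum regularity and concentration inputs by their standard discrete analogues; the structure of the recursion, and hence the exponent $\gamma^{4/3}/\log\gamma^{-1}$, is unaffected, with the extra factor $N$ coming from the rescaling of length as above.
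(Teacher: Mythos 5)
Your second step (discretizing a continuum near-geodesic and converting the lattice sum into the line integral via a single-scale oscillation estimate of the form \eqref{eq:smoothness}) is sound and is essentially the paper's Lemma~\ref{lem:star_lfpp}; likewise your closing one-sentence fallback correctly identifies the route the paper actually takes in Section~\ref{sec-discrete}. There the discrete Green's function is given a white-noise-like decomposition through the \emph{lazy} random walk: the killed walk is first replaced by $K_N(v,w)=\tfrac12\sum_{t<N^2}\P^v(S_t=w)$ and then split as $\sum_{k\in[n]}K_{N,k}$ over dyadic time windows $4^{k-1}\le t<4^k$, with laziness guaranteeing that each transition matrix, hence each increment kernel, is positive semidefinite. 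The analogues of the smoothness and switching-gain estimates are checked for these increments, the multiscale crossing recursion of Sections~\ref{sec-construction}--\ref{sec-analysis} is re-run (now over all orientations, since rotational invariance is lost), and the continuum crossing is converted to a lattice path exactly as in your second step.

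The primary route you propose, however, has a genuine gap at the coupling. Writing $\eta_{N,\lfloor Nx\rfloor}=\kappa\,h^{\mathcal U}_{1/N}(x)+\phi_N(x)$ with $\phi_N$ centered, \emph{independent} of $h^{\mathcal U}$ and of uniformly bounded variance is equivalent to asserting that $G_{V_N}(v,w)-\kappa^2\cov\big(h^{\mathcal U}_{1/N}(v/N),h^{\mathcal U}_{1/N}(w/N)\big)$ is a positive semidefinite kernel with $O(1)$ diagonal. The bounded diagonal is classical; positive semidefiniteness of the difference is not, and it is the entire content of the step --- the error terms in the local CLT asymptotics of $G_{V_N}$ have no definite sign. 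The genuinely standard comparison tools do not deliver it: Sudakov--Fernique/Kahane-type inequalities compare maxima rather than FPP weights, and the usual device of adding independent $O(1)$-variance noise to \emph{both} fields fails here because the near-geodesic is correlated with the added noise, so that noise cannot be integrated out in the direction needed for an upper bound. Indeed, the non-universality discussed in the paper (log-correlated fields with covariances agreeing up to an additive constant can have different distance exponents) shows that no such independent-error coupling can exist between arbitrary $O(1)$-close log-correlated kernels; if it holds for this particular pair, it needs a real proof, which is precisely why the paper bypasses the coupling and redoes the multiscale analysis on the discrete side. You also mislocate the difficulty: it is not boundary matching but the positive semidefiniteness of the kernel difference in the bulk. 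As written, the main argument is therefore incomplete, while the fallback you only gesture at is the correct and, as far as one can tell, necessary approach.
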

\begin{remark}
\label{remark:main_theorem}
Theorem~\ref{thm-discrete} still holds if we 
restrict $P$ to be a path within $V_{N, \epsilon}$ in \eqref{eq-LFPP-def}. 
\end{remark}
\subsection{Discussion on Watabiki's prediction}\label{sec:Watabiki}

The Liouville FPP and the Liouville graph distance are two (related) natural discrete approximations for the random distance associated with the Liouville quantum
gravity  \cite{P81,DS11,RV14}. Precise predictions on various exponents regarding to LQG distance  have been made by Watabiki \cite{Watabiki93} (see also, \cite{ANRBW98, AB14}). In particular, the Hausdorff dimension for the LQG distance is predicted to be
\begin{equation}\label{eq-Watabiki}
d_H(\gamma) = 1+\frac{\gamma^2}{4} + \sqrt{(1+ \frac{\gamma^2}{4})^2+ \gamma^2}\,.
\end{equation}
The prediction in \eqref{eq-Watabiki} was widely believed. In a recent work \cite{MS15}, Miller and Sheffield introduced and studied a process called \emph{quantum Loewner evolution}.  As a \emph{byproduct} of their work they gave a non-rigorous analysis on exponents of the LQG distance which matched Watabiki's prediction --- we also note that in \cite{MS15} the authors did express some reservations on their non-rigorous analysis. For other discussions on Watabiki's prediction in mathematical literature, see e.g., \cite{MRVZ14,GHS16}.

The precise mathematical interpretation of Watabiki's prediction is not completely clear to us. However, there are a number of reasonable ``folklore'' interpretations that seem to be widely accepted. For the Liouville graph distance, the scaling exponent $\chi = - \lim_{\delta\to 0}\frac{\E \log \tilde D_{\gamma, \delta}(v, w)}{\log \delta}$ is expected to exist and is expected to be given by (here we take $v, w$ as two fixed generic points in the domain)
\begin{equation}\label{eq-Watabiki-LGD}
\chi  = \frac{2}{d_H(\gamma)} \geq  1 - C \gamma^2\,, \mbox{ as } \gamma\to 0 \mbox{ for an absolute constant } C>0\,,
\end{equation}
where in the last step we plugged in \eqref{eq-Watabiki}. A similar interpretation to \eqref{eq-Watabiki-LGD} appeared in \cite[Conjecture 1.14]{GHS16} though the graph structure considered in \cite{GHS16} is based on the peanosphere construction of LQG and so far we see no mathematical connection to Liouville graph distance considered in the present article. Note that there is a difference of factor of 2, which is due to the fact that for the graph defined in \cite{GHS16} on average each ball contains LQG measure about $\epsilon$ (in their notation) while in our construction each ball contains LQG measure $\delta^2$. We see that Theorem~\ref{thm:LQG} contradicts \eqref{eq-Watabiki-LGD}.

There are also reasonable interpretations of Watabiki's prediction for Liouville FPP.\footnote{For instance, we learned from R{\'e}mi Rhodes and Vincent Vargas that, according to \cite{Watabiki93}, the physically appropriate approximation for the $\gamma$-LQG distance should involve $\inf_{P} \int_{P}\e^{\frac{\gamma}{d_H(\gamma)} h_\delta(z)}|dz|$ , i.e., the parameter in the exponential of GFF is $\gamma/d_H(\gamma)$ instead of $\gamma$.} For instance, see \cite[Equation (17), (18)]{AB14}. We would like to point out that in \cite[Equation (17)]{AB14} the term $\rho_\delta$ was not defined --- some reasonable interpretations include $\rho_\delta = \e^{\frac{\gamma}{2} h_\delta(z)}$ and $\rho_\delta = \e^{\gamma h_\delta(z)} \delta^{\frac{\gamma^2}{2}}$ as well as possibly replacing $\gamma$ by $\frac{\gamma}{d_H(\gamma)}$ as suggested in the footnote. 
For all these interpretations, \cite[Equation (18)]{AB14} would then imply that there exist constants $c, C>0$ such that for sufficiently small but fixed $\gamma>0$ the Liouville FPP distance between two generic points is between $\delta^{C\gamma^2}$ and $\delta^{c\gamma^2}$ as $\delta \to 0$. However, Theorem~\ref{thm:main} contradicts with all aforementioned interpretations of \eqref{eq-Watabiki} for Liouville FPP at high temperatures.

An extremely meticulous reader may be doubtful whether Liouville FPP and Liouville graph distance are reasonable to approximate the ``true'' LQG distance. To aim for a clarification, we would like to remark that in \cite{DZZ17} the authors considered a certain type of log-correlated Gaussian field which they refer to as coarse modified branching random walk. In \cite{DZZ17}, 
the exponent on the heat kernel for Liouville Brownian motion (with respect to the coarse modified branching random walk) was computed via considering Liouville FPP (naturally with coarse branching random walk as the underlying field). This may serve as a piece of evidence that the Liouville FPP is indeed a reasonable approximation, as the heat kernel for Liouville Brownian motion is generally considered to encode the distance properties in the limit.

As a final remark, we note that currently we do not have any reasonable conjecture on the precise value of the exponent for either Liouville FPP or Liouville graph distance --- we regard a precise computation of the exponent for either of the two distances as a major challenge.

\subsection{Discussion on non-universality}

Combined with \cite{DZ15}, Theorem~\ref{thm-discrete} shows that the distance exponent for first passage percolation on the exponential of log-correlated Gaussian fields is non-universal, i.e., the exponents may differ for different families of log-correlated Gaussian fields. In contrast, we note that the behavior for the  maximum is universal among log-correlated Gaussian fields (see e.g., \cite{BZ10, BDZ14,Madaule15, DRZ15}) in a sense that their expectations are the same up to additive $O(1)$ term and that the laws of the centered maxima for all these fields are in the same universal family known as Gumbel distribution with random shifts (but the random shifts may not have the same law for different fields). 

While non-universality suggests subtlety for the distance exponent of Liouville FPP, the proof in the present article does not see complication due to such subtlety. In fact, our proof should be adaptable to general log-correlated Gaussian fields with $\star$-scale invariant kernels as in \cite{DRSV14}.  The following question remains an interesting challenge, especially (in light of the non-universality) for log-correlated Gaussian fields for which a kernel representation is not known to exist. 
\begin{question}
Let $\{\varphi_{N, v}: v\in V_N\}$ be an arbitrary mean-zero Gaussian process satisfying $|\E \varphi_{N, v} \varphi_{N, u} -  \log \frac{N}{1 +  |u-v|}| \leq K$. Does an analogue of Theorem~\ref{thm-discrete} hold for $C_\gamma, c^*$ depending on $K$?
\end{question}
\subsection{Further related works}

Much effort has been devoted to understanding classical first-passage
percolation (FPP), with independent and identically distributed edge/vertex
weights. We refer the reader to \cite{ADH15,GK12} and their references
for reviews of the literature on this subject. We argue that FPP with
strongly-correlated weights is also a rich and interesting subject,
involving questions both analogous to and divergent from those asked
in the classical case. Since the Gaussian free field is in some sense
the canonical strongly-correlated random medium, we see strong motivation
to study Liouville FPP.

More specifically, as mentioned earlier Liouville FPP and Liouville graph distance play  key roles in
understanding the random distance associated with the Liouville quantum
gravity (LQG). We remark that the random distance of LQG is a major open problem, even just to make rigorous sense of it (we refer to \cite{RV16} for a rather up-to-date review). In a recent series of works of Miller and Sheffield, much understanding has been obtained for the LQG distance (in the special case when $\gamma = \sqrt{8/3}$), and we note that an essentially equivalent distance to Liouville graph distance was mentioned in \cite{MS15} as a natural approximation. While no mathematical result was obtained (perhaps not attempted either) on such approximations, the main achievement of this series of works by Miller and Sheffield (see \cite{MS15, MS15b,  MS16, MS16b} and references therein) is to produce candidate scaling limits and to establish a deep connection to the Brownian map.   Our approach is different, in the sense that we aim to understand the random distance of LQG via approximations by natural discrete distances. We also note that in a recent work \cite{GHS16} some upper and lower bounds have been obtained for a type of distance related to LQG and that their bounds are consistent with Watabiki's prediction. We further remark that currently we see no connection between our work and \cite{MS15, MS15b, MS16, MS16b, GHS16}.

Furthermore, we expect that Liouville FPP distance and Liouville graph distance are related to the heat kernel estimate for Liouville Brownian motion (LBM) --- in fact, we expect a direct and strong connection between  Liouville graph distance and the LBM heat kernel.
 The mathematical construction (of the diffusion) for LBM was provided in \cite{GRV13, B14} and the heat kernel was constructed in \cite{GRV14}. The LBM is closely related to the geometry of LQG; in \cite{FM09, BGRV14} the KPZ formula was derived from Liouville heat kernel. In \cite{MRVZ14} some nontrivial bounds for LBM heat kernel were established. A very interesting direction is to compute the heat kernel of LBM with high precision.

There has been a number of other recent works on Liouville FPP (while they focus on the case for the discrete GFF, these results are expected to extend to the case of continuum GFF).
In a recent work \cite{DD16}, it was shown that at high temperatures the appropriately normalized Liouville FPP converges subsequentially in the Gromov-Hausdorff sense to a random distance on the unit square, where all the (conjecturally unique) limiting distances are homeomorphic to the Euclidean distance. We remark that the proof method in the current paper bears little similarity to that in \cite{DD16}. In a very recent work \cite{DZ16}, it was shown that the dimension of the geodesic for Liouville FPP is strictly larger than 1, again for small but fixed $\gamma>0$. In fact, in \cite{DZ16} it proved that the distance exponent is bounded from below by a number arbitrarily close to 1 if we restrict to paths with dimensions sufficiently close to 1. This, combined with Theorem~\ref{thm-discrete} yields the lower bound on the dimension of the geodesic. While both the proofs in \cite{DZ16} and the present article use multi-scale analysis method, the details are drastically different. 

Finally, we would like to mention that there has been a few other recent works on the distance properties of two-dimensional discrete GFF, including \cite{LW16} on the random pseudo-metric on a graph defined via the zero-set of the Gaussian free field on its metric graph,  \cite{DL16} on the chemical distance on the level sets of GFF,
and \cite{BDG16} on the effective resistance distance on the random network where each edge $(u, v)$ is assigned a resistance $e^{\gamma(\eta_{N, u} + \eta_{N, v})}$. In particular, the work of \cite{BDG16} implies that the typical effective resistance between two vertices of Euclidean distance $N$ behaves like $N^{o(1)}$. Combined with Theorem~\ref{thm-discrete}, this implies that (somewhat mysteriously)  perturbing $\mathbb Z^2$ by assigning weights which are exponentials of GFF drastically distorts the shortest path distance but more or less preserves the effective resistance of $\mathbb Z^2$.

\subsection{Organization}
We now give a brief guide on the organization. In Section~\ref{sec-history}, we present our proof strategy via a toy problem which captures the core computation 
leading to the exponent $4/3$ in our main results. In Section~\ref{sec-preliminary}, we introduce a new Gaussian process (referred to simply as the ``new process'' hereafter in the current subsection) which has a simpler hierarchical structure than the circle average process. We show that the two processes can be coupled in a way such that their maximal pointwise difference is small, allowing us to carry out our main proof in terms of 
the new process. In Section~\ref{sec-construction} we describe the geometric framework for our 
inductive construction of ``light'' paths as scales increase. ``Light'' means that these paths have small weight which are computed as integrals of the exponential of the new process along the paths.  We then complete the construction in Section~\ref{sec-analysis} by choosing the optimization strategy (with respect to the new process) and thus obtain a bound on the corresponding 
distance exponent. This bound along with the coupling between the two processes from Section~\ref{sec-preliminary} leads to a proof of Theorem~\ref{thm:main} at the end of Section~\ref{sec-analysis}. In Section~\ref{sec:LQG}, we prove Theorem~\ref{thm:LQG} by relating the Liouville graph distance to Liouville FPP and applying 
Theorem~\ref{thm:main}. Finally, in Section~\ref{sec-discrete} we explain how to adapt the proof to deduce Theorem~\ref{thm-discrete}.

\medskip

\noindent {\bf Acknowledgement.} We thank Marek Biskup, Hugo Duminil-Copin, Alexander Dunlap, Steve Lalley,  Elchanan Mossel, R{\'e}mi Rhodes, Scott Sheffield, Vincent Vargas and Ofer Zeitouni for helpful discussions. We especially thank Alexander Dunlap for a careful reading of a preliminary draft and numerous useful comments. An earlier version of the manuscript was completed when both authors were in the University of Chicago.

\section{An overview of our proof strategy}\label{sec-history}
The key technical contribution of the present article is Theorem~\ref{thm:main}, provided with which Theorems~\ref{thm:LQG} and \ref{thm-discrete} follow more or less in an expected way.
Our proof strategy of Theorem~\ref{thm:main} naturally inherits that of \cite{DG15} which proved a weak version of Theorem~\ref{thm-discrete} in the context of branching random walk (BRW), and we encourage the reader to flip through \cite{DG15}  (in particular Section 1.2) which contains a prototype of the multi-scale analysis carried out in the current paper. In fact, prior to the present article, we posted an article \cite{DG16} on arXiv which proved that the distance exponent is less than $1 - \gamma^2/10^3$. Our present article proves a stronger result than \cite{DG16}. In addition, our proof simplifies that of \cite{DG16} and is self-contained. As a result,  \cite{DG16} will be superseded by the present article and will not be published anywhere. 

However, some historical remarks might be interesting and helpful. During the work of \cite{DG16}, we had in mind that the second leading term for the distance exponent  is 
of order $\gamma^2$ in light of \eqref{eq-Watabiki}. As a result, we followed \cite{DG15} and designed an inductive strategy for constructing light crossings (i.e., paths connecting two shorter boundaries of a rectangle) to prove an upper bound of $1 
- \gamma^2/10^3$. In the multi-scale construction, the order of $\gamma^2$ is exactly the order of both the gain and the loss for our strategy, and thus a much delicate analysis was carried out in \cite{DG16} since we fought 
between the two constants for the loss and the gain. A curious reader may quickly flip through \cite{DG16} for an impression on the level of technicality.

A key component in both \cite{DG15,DG16} is an inductive construction where we construct light crossings in a 
bigger scale from crossings in smaller scales. We switch between two layers of candidate crossings in a smaller scale based on the value of Gaussian variables in the bigger scale (note that there is a hierarchical 
structure for both BRW and GFF). In those papers, we used vertical crossings as our switching gadgets to connect 
horizontal crossings in top and bottom layers. A crucial improvement in the current article arises from the simple observation that a \emph{sloped} switching gadget is much 
more efficient (see Figure~\ref{fig:sloped_gadget}). In order to give a flavor of how it works we discuss a 
toy problem in this section.

We will use some order notations to avoid unnecessary named constants and these will carry the same 
meaning throughout the paper. For (nonnegative) functions $F(.)$ and $G(.)$ we write $F = O(G)$ (or $\Omega(G)$) if there exists an absolute constant $C > 0$ such that $F \leq C G$ (respectively $\geq C G$) everywhere in their domain. We write $F = \Theta(G)$ if $F$ is both $O(G)$ and 
$\Omega(G)$. If the constant depends on variables $x_1, x_2, \ldots, x_n$, we change these notations to $O_{x_1,x_2, \ldots, x_n}(G)$ and $\Omega_{x_1,x_2, \ldots, x_n}(G)$ respectively.

Let $\Gamma = \Gamma(\gamma)$ be a large positive number (we assume throughout that $0 < \gamma \ll 1$) and $\{\zeta(v): v \in \R^2\}$ be a continuous, centered 
Gaussian process on the plane. Suppose that $\zeta$ satisfies the following three properties:\\
(a) $\var (\zeta(v)) = 1$ for all $v \in \R^2$.\\
(b) For any straight line segment $L$, $\var(\int_{L} \zeta(z)|dz|) = O(\|L\|)$, where $\|L\|$ is the (Euclidean) length 
of $L$. \\
(c) If $v \in \R^2$ is orthogonal to $L$ such that $\snorm{v} \geq 0.1$ (say) where $\snorm{.}$ denotes the $\ell_2$ norm, then
$$\var\Big(\int_{L} \zeta(z)|dz| - \int_{L + v} \zeta(z)|dz|\Big) = \Omega(\|L\|)\,.$$
We first want to construct a piecewise smooth path $P$ connecting the shorter boundaries of the rectangle $V^\Gamma = [0, \Gamma] \times [0, 1]$ such that the ``weight'' of $P$, given by $\int_{P}\e^{\gamma\zeta(z)}|dz|$, has a small 
expectation. Since $\e^x = 1 + x + O(x^2)$ as $x \to 0$ and $\var(\zeta(z)) = 1$ by Property~(a), we can approximate $\e^{\gamma \zeta(z)}$ with $1 + \gamma \zeta(z) +  O(\gamma^2)$ when $\gamma$ is sufficiently small (a justification in the proof is given in Section~\ref{sec-analysis}). Thus the weight of $P$ is approximately
\begin{equation}
\label{eq:approx_randomln}
(1 + O(\gamma^2))\|P\| + \gamma\int_P \zeta(z) |dz|\,.
\end{equation}
Henceforth we will treat the above expression as the 
``true'' weight of $P$. Now consider the segments $L_1 = [0, \Gamma] \times \{0.75\}$ and $L_2 = [0, \Gamma] \times \{0.25\}$. Choose $\beta$ such that $\Gamma \gg \beta \gg 1$ (later we will choose $\beta = \gamma^{-2/3}$, and we will see that this is an optimal choice) and that $\Gamma/\beta$ is an integer.  Divide $L_i$ (here $i \in \{1, 2\}$) into segments $L_{i, 1}, L_{i, 2}, \ldots, L_{i, \Gamma / \beta}$ 
of length $\beta$ from left to right. Given $i_j \in \{1, 2\}$ for each $j \in \{1, \ldots, \Gamma / \beta\}$ (called a \emph{strategy}), we can construct a crossing, i.e., a path connecting the shorter boundaries of $V^\Gamma$ as follows. If $i_j = i_{j + 1}$ for some $j \leq \Gamma / \beta$ (we use the convention $i_{\Gamma/\beta + 1} = i_{\Gamma/\beta}$), let $L_{j, \mathrm{chosen}}$ be the segment $L_{i_j, j}$. Otherwise set $L_{j, \mathrm{chosen}}$ to be the segment joining the left endpoints of $L_{i_j, j}$ and 
$L_{i_{j+1}, j+1}$ (the sloped gadget). Notice that there can be only two possible sloped gadgets at a ``location'' $j$ which we will refer to as $L_{j, \mathrm{slope}}^1$ and $L_{j, \mathrm{slope}}^2$ in a certain arbitrary order. It is clear that the union of the segments $L_{1, \mathrm{chosen}}, L_{2, \mathrm{chosen}}, \ldots, L_{\Gamma / \beta, \mathrm{chosen}}$ forms a crossing. The weight (see \eqref{eq:approx_randomln}) of this crossing is given by
\begin{equation}
\label{eq:toy_prob}
\big(1 + O(\gamma^2)\big)\Gamma + \big(1 + O(\gamma^2)\big)\sum_{j \in [\Gamma / \beta]}\mathbf 1_{\{i_j \neq i_{j + 1}\}}(\|L_{j, \mathrm{chosen}}\| - \beta) + \gamma \sum_{j \in [\Gamma / \beta]}\int_{L_{j, \mathrm{chosen}}}\zeta(z)|dz|\,
\end{equation}
{where $[n] = \{1, \ldots, n\}$ for any positive 
integer $n$.} {We need to bound its expectation from above. However, it would be more convenient to analyze an upper bound that involves $\E(\int_{L_{i_j,  j}}\zeta(z)|dz|)$'s instead of $\E(\int_{L_{j, \mathrm{chosen}}}\zeta(z)|dz|)$'s (the advantage will become clear 
once we describe our strategy). To this end we bound $\sup_{i_j \in [2], i_{j + 1} \neq i_j}\big \vert\int_{L_{j, \mathrm{chosen}}}\zeta(z)|dz| - \int_{L_{i_j, j}}\zeta(z)|dz|\big\vert$ simply by
\begin{equation*}
\label{eq:toy_prob1}
\sum_{i \in [2]}\big \vert \int_{L_{i, j}}\zeta(z)|dz| \big \vert + \sum_{i \in [2]}\big \vert \int_{L_{j, \mathrm{slope}}^i}\zeta(z)|dz| \big \vert\,.
\end{equation*}
By Property~(b), each of the integrals above is a centered Gaussian variable with variance $O(\|L_{j, \mathrm{slope}}^i\|) = O(\sqrt{(0.75 - 0.25)^2 + \beta^2}) = O(\beta)$ 
(since $\beta \gg 1$). Hence $\E \big \vert \int_{L_{i, j}}\zeta(z)|dz| \big \vert$ and $\E \big \vert \int_{L_{j, \mathrm{slope}}^i}\zeta(z)|dz| \big \vert$ are both 
$O(\sqrt{\beta})$.} Now let $J = \{j_1, j_2, \ldots, j_{|J|}\} \subseteq [\Gamma / \beta]$ with $j_1 < j_2 < \ldots < j_{|J|} =  \Gamma / \beta$ where $|J|$ is the cardinality of $J$. Consider an arbitrary strategy {$\{i_j\}_{j \in [\Gamma / \beta]}$ (which can be random) allowing} $i_j \neq i_{j + 1}$ only on $J\setminus \{j_{|J|}\}$. {We then see that the expected value of \eqref{eq:toy_prob} is bounded above by
\begin{equation*}
\label{eq:toy_prob1*}
\big(1 + O(\gamma^2)\big)\Gamma + \big(1 + O(\gamma^2)\big)|J|(\|L_{1, \mathrm{slope}}^1\| - \beta) + \gamma |J|C\sqrt{\beta} + \gamma \sum_{j \in [\Gamma / \beta]}\E \int_{L_{i_j, j}}\zeta(z)|dz|\,,
\end{equation*}
where $C$ is a positive constant. Notice that we used the fact that the $L_{j, \mathrm{slope}}^i$'s have the same length for all $j$.} Also since $\beta \gg 1$, we have $\|L_{1, \mathrm{slope}}^1\| - \beta = {\sqrt{(0.75 - 0.25)^2 + \beta^2} - \beta} = 
O(\beta^{-1})$ {and thus the last display can be further bounded by}
\begin{equation}
\label{eq:toy_prob2}
\big(1 + O(\gamma^2)\big)\Gamma + |J|C'\beta^{-1} + \gamma |J|C\sqrt{\beta} + \gamma \sum_{k \in [|J|]}\E \big( \int_{{\mathfrak L_{{i_{j_k}}, k, J}}}\zeta(z)|dz|\big)\,.
\end{equation}
Here $C'$ is a positive constant {(recall that $\gamma$ is small and hence $1 + O(\gamma^2) = O(1)$)} and {$\mathfrak L_{i, k, J}$ is the union of segments $L_{i, j_{k -1} + 1}, L_{i, j_{k -1} + 2}, \ldots, L_{i, j_k}$ with $j_0 = 0$ (i.e., the horizontal segment joining the left endpoint of $L_{i, j_{k-1} + 1}$ and the right endpoint of 
$L_{i, j_k}$) for $i\in \{1, 2\}$. Thus, $\mathfrak L_{{i_{j_k}}, k, J}$ is interpreted as $\mathfrak L_{1, k, J}$ or $\mathfrak L_{2, k, J}$ depending on whether $i_{j_k} = 1$ or $2$.
 For any given $\beta$ and $J$, it is clear that the minimizing strategy for \eqref{eq:toy_prob2} is the following:
\begin{equation*}
\label{eq:toy_prob2*}
i_{j_k} = \argmin_i \int_{{\mathfrak L_{i, k, J}}}\zeta(z)|dz| \mbox{ for all }k \in [|J|]\,.
\end{equation*}
Since $\min(X, Y) = \tfrac{X + Y}{2} - \tfrac{|X -Y|}{2}$ and $\int_{{\mathfrak L_{i, k, J}}}\zeta(z)|dz|$'s are centered variables, it follows that
\begin{equation*}
\label{eq:toy_prob3}
\E \big( \int_{\mathfrak L_{i_{{j_k}}, k, J}}\zeta(z)|dz|\big) = -\frac{1}{2}\E\bigg \vert \int_{\mathfrak L_{1, k, J}}\zeta(z)|dz| - \int_{\mathfrak L_{2, k, J}}\zeta(z)|dz|\bigg\vert\,.
\end{equation*}
In addition, we see from Property~(c) that $ \int_{\mathfrak L_{1, k, J}}\zeta(z)|dz| - \int_{\mathfrak L_{2, k, J}}\zeta(z)|dz|$ is a centered Gaussian variable 
with variance $\Omega((j_k - j_{k - 1})\beta)$. Consequently
\begin{equation*}
\label{eq:toy_prob3*}
\E \big( \int_{\mathfrak L_{i_{{j_k}}, k, J}}\zeta(z)|dz|\big) = -\frac{1}{2}\E\bigg \vert \int_{\mathfrak L_{1, k, J}}\zeta(z)|dz| - \int_{\mathfrak L_{2, k, J}}\zeta(z)|dz|\bigg\vert \leq -c\sqrt{(j_k - j_{k-1})\beta}\,,
\end{equation*}
for some positive constant $c$. Thus if $\min_{k \in [|J|]}(j_k - j_{k-1}) \geq G$, \eqref{eq:toy_prob2} can be bounded from above by
\begin{equation}
\label{eq:toy_prob4*}
\big(1 + O(\gamma^2)\big)\Gamma - |J|\big(c\gamma \sqrt{G}\sqrt{\beta} - C'\beta^{-1} - \gamma C\sqrt{\beta}\big)\,.
\end{equation}
Let us choose $G$ to be only large enough so that 
\begin{equation}
\label{eq:toy_prob4}
c\gamma\sqrt{G}\sqrt{\beta} \geq 2(C'\beta^{-1} + \gamma C\sqrt{\beta})\,.
\end{equation}
In order for there to be a valid choice for $G$, it must be that $\Gamma \geq G\beta$. A straightforward algebra shows us that this is true if
\begin{equation}
\label{eq:toy_prob5}
\Gamma \geq \frac{8C'^2}{c^2\gamma^2 \beta^2} + \frac{8C^2\beta}{c^2}\,.
\end{equation}
We will choose $\Gamma$ according to this condition at the end. For the moment being let us just assume \eqref{eq:toy_prob5} holds. For such a choice of $G$, there exists $J$ such that $\min_{k \in [|J|]}(j_k - j_{k-1}) \geq G$ and 
$$|J| = \Omega\Big(\frac{\Gamma}{G\beta}\Big) = \frac{\Omega(\Gamma)}{\frac{4(C'\beta^{-1} + \gamma C\sqrt{\beta})^2}{c^2\gamma^2}} = \frac{\Omega(\Gamma \gamma^2)}{(\beta^{-1} + \gamma \sqrt{\beta})^2}\,.$$
Plugging this into \eqref{eq:toy_prob4*} and using \eqref{eq:toy_prob4}, we find that \eqref{eq:toy_prob4*} can be at most
\begin{eqnarray*}
&& \big(1 + O(\gamma^2)\big)\Gamma - |J|(C'\beta^{-1} + \gamma C\sqrt{\beta})\\
&=& \big(1 + O(\gamma^2)\big)\Gamma - \frac{\Omega(\Gamma \gamma^2)}{(\beta^{-1} + \gamma \sqrt{\beta})^2}(C'\beta^{-1} + \gamma C\sqrt{\beta})\\
&=& \big(1 + O(\gamma^2)\big)\Gamma - \frac{\Omega(\Gamma \gamma^2)}{\beta^{-1} + \gamma\sqrt{\beta}}\,.
\end{eqnarray*}
}
The above expression is minimized for $\beta = \gamma^{-2/3}$ and the optimal value is $\Gamma(1 - 
\Omega(\gamma^{4/3}))$ {since} $\gamma$ is small --- by \eqref{eq:toy_prob5} we see that $\Gamma$ only needs to be a large (but fixed) multiple of $\gamma^{-2/3}$ for this bound to be valid although our choice of $\Gamma$ will be 
more restrictive in the real proof. These computations, which lead to the factor of $(1 - 
\Omega(\gamma^{4/3}))$,  give us the first hint on the appearance of $\gamma^{4/3}$ in the exponent as in Theorem~\ref{thm:main}.

However the circle average process $h_{2^{-n}}^{\mathcal U}$ is not a smooth process like $\zeta$. Rather it is very close to a process like $X_n = \zeta_1(\cdot) + \zeta_2(2\cdot) + \ldots + \zeta_n(2^n \cdot)$ where $\zeta_1, \ldots, \zeta_n$ are independent copies of 
$\zeta$ (see Section~\ref{sec-history}). We can take advantage of this decomposition to construct a light crossing through $V^\Gamma$ with respect to $X_n$ by 
applying our construction at many scales $k \in [n]$. To elucidate it further let us ``thicken'' the segments $L_{i,j}$ and $L_{j, \mathrm{slope}}^i$ to rectangular 
strips of width $\Gamma^{-2}$. We split these strips into even smaller rectangles of length $\Gamma^{-1}$ so 
that their aspect ratio is the same as that of $V^\Gamma$ and then use the field $\zeta_{2\log_2\Gamma + 1}(\Gamma^{2}\cdot) + \ldots + \zeta_n(2^n\cdot)$ to construct ``efficient'' crossings through each one of them. Since their width is very small, the collection of crossings through adjacent rectangles in a strip roughly ``look like'' a line segment 
joining its shorter boundaries. Consequently we can perform our switching steps with respect to $\zeta_1$ using these collections of crossings instead of 
$L_{i,j}$'s and $L_{j, \mathrm{slope}}^i$'s. Due to independence of $\zeta_k$'s, the only way this will affect the computation of the expected weight is by changing the unit of length to $\Gamma d_{n - 2\log_2 \Gamma; n}$ where $d_{n - 2\log_2 \Gamma; n}$ is the maximum expected weight of crossings through smaller rectangles for the field $\zeta_{2\log_2\Gamma + 1}(\Gamma^{2}\cdot) + \ldots + 
\zeta_n(2^n\cdot)$ (recall that the length of these rectangles is $\Gamma^{-1}$). Notice that the latter is identically distributed as $X_{n - 2\log_2 
\Gamma}(\Gamma^2 \cdot)$ and thus $\Gamma^2 d_{n - 2\log_2 \Gamma; n}$ is the maximum expected length of an ``efficient'' crossing through a rectangle obtained by 
translating and / or rotating $V^\Gamma$ for the field $X_{n - 2\log_2 \Gamma}$. Denoting this quantity as $d_{n - 2\log_2 \Gamma}$ we then get the following recursive relation for $d_n$'s:
$$d_{n} \leq (\E \e^{\gamma Z})^{2\log_2\Gamma - 1}\Gamma^2 d_{n - 2\log_2 \Gamma; n}(1 - \Omega(\gamma^{4/3})) = (1 + \gamma^2/2)^{2\log_2\Gamma - 1}d_{n - 2\log_2 \Gamma}(1 - \Omega(\gamma^{4/3}))\,,$$
where $Z$ is a standard Gaussian variable and the first factor accounts for the effects of the fields $\zeta_2, 
\ldots,$ $\zeta_{2\log_2\Gamma}$. When $\Gamma$ is at most a power of $\gamma^{-1}$ (as is the case in the real proof) and $\gamma$ is small, the above inequality can be rewritten as $d_{n} \leq d_{n - 2\log_2 \Gamma}(1 
-\Omega(\gamma^{4/3}))$. Iterating this we finally get 
$$d_n \leq \Gamma (1 -\Omega(\gamma^{4/3}))^{n/2\log_2\Gamma} = \Gamma (2^{-n})^{\frac{\Omega(\gamma^{4/3})}{\log \gamma^{-1}}}\,,$$
which shows, in a high level, why we get the factor of $\gamma^{4/3}$ in the exponent as in Theorem~\ref{thm:main}.

We remark that the simple observation on the sloped switching strategy is more natural when considering continuous path in the plane --- this is why our main proof focuses on the case of continuous GFF.  In the case for discrete GFF, we first bound the distance minimizing the lengths over all continuous paths and then argue that for each continuous path there is a lattice path whose weight grows by a factor that is negligible. 

\subsection{Conventions, notations and some useful definitions}
In the remainder of the paper we assume that $\gamma$ is small enough (less than some small, positive absolute constant) for our bounds or inequalities to hold although 
we keep this implicit in our discussions. {Define $\Gamma = \min_{k \in \N}\{2^k: 2^k \geq \gamma^{-2}\}$ to be} the smallest integer power of 2 that is $\geq 
\gamma^{-2}$. Thus $1 \leq \Gamma \gamma^{2} < 2$. (It will be clear later that our analysis works if we alternatively define {$\Gamma = \min_{k\in \N}\{2^k: 
2^k \geq \gamma^{-\chi}\}$} for any $\chi >4/3$). {We further denote by $m_\Gamma = \log_2 \Gamma$.}

For any $w \in \R^2$, $\ell \in \R$ and $r > 0$, define $V_\ell^{r; w}$ as the rectangle $w + [0, r2^{-\ell}] 
\times [0, 2^{-\ell}]$. We will suppress $\ell$ or $w$ 
from this notation whenever they are 0. We will also omit 
$r$ when it is 1. The longer and shorter dimensions of a rectangle are called its \emph{length} and \emph{width} 
respectively. Two rectangles $R$ and $R'$ are called \emph{copies} of each other if $R$ can be obtained from 
$R'$ via translation and / or rotation by an angle. The rectangles $R$ and $R'$ are called \emph{non-overlapping} 
if their interiors are disjoint. If $R$ and $R'$ have the same dimensions then we say that they are \emph{adjacent} if 
they share one of their shorter boundary segments. A \emph{smooth path} is a $C^1$ map $P: [0, 1] \to \R^2$. We will also use $P$ to denote the image set of $P$ which is a subset of $\R^2$. This distinction should be clear from 
the context. For any rectangle $R = [a, b] \times [c, d]$ with sides parallel to the coordinate axes, we define its left, right, upper and lower boundary segments in the obvious way and denote them as $\lb R, \rb R, \ub R$ 
and $\db R$ respectively. Thus $\lb R$ is the path 
described by $(a, c + t(d - c)); t \in [0, 1]$ etc. 

For convenience, we identify (and denote) the points 
in $\R^2$ as complex numbers. {If $w = a + \iota b$ where $a, b \in \R$ and $\iota = \sqrt{-1}$, then 
we write $\mathrm{Re}(w) = a$ and $\mathrm{Im}(w) = b$. To avoid potential confusion, we will use bold fonts for real numbers when they are considered as complex 
numbers, i.e., as points on the real line. We denote the absolute value (i.e., the $\ell_2$-norm) of a complex 
number $w$ by $|w|$. It will also be used for the absolute value of a real number $r$, 
and when the argument of $|\cdot|$ is a finite set $A$ it should be interpreted as the cardinality of $A$.}

{We denote the set of all 
nonnegative real numbers by $\R^+$. For a subset $E$ of $\R^n$, we denote by $\mathcal B(E)$ the class of all Borel subsets of $E$.}

We will also continue to use the $O, \Omega, \Theta$ notations introduced at the beginning of this section.

\section{{White noise decomposition of the circle average process}}\label{sec-preliminary}
\label{subsec:white_noise}
In this section, we will introduce an approximation of the circle average process via the white noise decomposition 
of the Gaussian free field. A white noise $W$ distributed on $\R^2 \times \R^+$ refers to a centered Gaussian process $\{(W, f): f \in L^2(\R^2 \times \R^+)\}$ whose covariance kernel is given by 
$$\E (W, f) (W, g) = \int_{\R^2 \times \R^+}fgdzds\,.$$ 
An alternative notation for $(W, f)$ is $\int_{\R^2 \times 
\R^+}f W(dz, ds)$, which we use in this paper. For any $B \in \B(R^2)$ and $I \in \B(R^+)$, we let $\int_{B \times I}f W(dz, ds)$ denote the variable $\int_{\R^2 \times \R^+}f_{B \times I} W(dz, ds)$ where $f_{B \times 
I}$ is the restriction of $f$ to $B \times I$. Now define a Gaussian process $\{\hat h^{\mathcal U}_\delta(v) : v \in \mathcal U_\delta \}$ as follows (the factor $\sqrt{\pi}$ below corresponds to $\pi$ in \eqref{eq:GFF_cov}):
\begin{equation}
\label{eq:WND_decomposition*}
\hat h^{\mathcal U}_\delta(v) = \int_{\mathcal U \times (0, \infty)}\big(\int_{\partial B_\delta(v)} \sqrt{\pi}p_{\mathcal U}(s/2; w, z)\mu_\delta^v(dw)\big)W(dz, ds)\,.
\end{equation}
{\begin{lemma}\label{lem-identity-in-law}
The process $\{\hat h^{\mathcal U}_\delta(v):  v\in \mathcal U_\delta\}$ is identically distributed as $\{h_\delta^{\mathcal U}(v): v\in \mathcal U_\delta\}$ for all 
$\delta \in (0, r_\mathcal U)$ where $r_\mathcal U = $ $\sup\{r > 0: \mathcal U_r \neq \emptyset\}$. 
\end{lemma}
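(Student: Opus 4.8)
The statement is an identity in law between two centered Gaussian processes, so it suffices to check that they have the same covariance function (and that $\hat h^{\mathcal U}_\delta$ is well-defined, i.e., the integrands lie in $L^2(\mathcal U \times (0,\infty))$). I would compute $\cov(\hat h^{\mathcal U}_\delta(v), \hat h^{\mathcal U}_{\delta'}(v'))$ directly from the defining formula \eqref{eq:WND_decomposition*} using the white-noise isometry $\E(W,f)(W,g) = \int f g$, and match it against \eqref{eq:GFF_cov}. Concretely, the covariance of $\hat h$ is
\[
\int_{\mathcal U \times (0,\infty)} \Big(\int_{\partial B_\delta(v)}\!\!\sqrt{\pi}\,p_{\mathcal U}(s/2;w,z)\,\mu_\delta^v(dw)\Big)\Big(\int_{\partial B_{\delta'}(v')}\!\!\sqrt{\pi}\,p_{\mathcal U}(s/2;w',z)\,\mu_{\delta'}^{v'}(dw')\Big)\,dz\,ds\,.
\]
Pulling the $z$- and $s$-integrals inside the two circle averages (Tonelli, everything nonnegative), the inner object becomes $\pi\int_{\partial B_\delta(v)\times\partial B_{\delta'}(v')}\big(\int_{(0,\infty)}\int_{\mathcal U} p_{\mathcal U}(s/2;w,z)p_{\mathcal U}(s/2;w',z)\,dz\,ds\big)\,\mu_\delta^v(dw)\mu_{\delta'}^{v'}(dw')$.

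**Key step: the Chapman–Kolmogorov / semigroup identity.** The heart of the matter is the identity
\[
\int_{(0,\infty)}\!\!\int_{\mathcal U} p_{\mathcal U}(s/2;w,z)\,p_{\mathcal U}(s/2;w',z)\,dz\,ds = G_{\mathcal U}(w,w')\,.
\]
To see this, use the symmetry $p_{\mathcal U}(t;w',z) = p_{\mathcal U}(t;z,w')$ of the killed heat kernel (reversibility of Brownian motion with respect to Lebesgue measure, with symmetric killing) and then the Chapman–Kolmogorov equation $\int_{\mathcal U} p_{\mathcal U}(t;w,z)p_{\mathcal U}(t;z,w')\,dz = p_{\mathcal U}(2t;w,w')$. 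Substituting $t = s/2$ and changing variables $u = s$ (so $du = ds$, and $2t = u$) turns the left side into $\int_{(0,\infty)} p_{\mathcal U}(u;w,w')\,du = G_{\mathcal U}(w,w')$ by \eqref{eq:Green_fxn}. Plugging this back, the covariance of $\hat h$ equals $\pi\int_{\partial B_\delta(v)\times\partial B_{\delta'}(v')} G_{\mathcal U}(w,w')\,\mu_\delta^v(dw)\mu_{\delta'}^{v'}(dw')$, which is exactly \eqref{eq:GFF_cov}. Since both processes are centered Gaussian with the same covariance over $v \in \mathcal U_\delta$ (for every $\delta \in (0, r_{\mathcal U})$), they are identically distributed.

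**Main obstacle.** The genuine technical point is the $L^2$/integrability bookkeeping that legitimizes the computation: one must check that for $v \in \mathcal U_\delta$ the function $(z,s)\mapsto \int_{\partial B_\delta(v)}\sqrt{\pi}\,p_{\mathcal U}(s/2;w,z)\,\mu_\delta^v(dw)$ is in $L^2(\mathcal U\times(0,\infty))$ — equivalently, by the diagonal case $v=v'$, $\delta=\delta'$ of the above computation, that $\cov(h_\delta^{\mathcal U}(v),h_\delta^{\mathcal U}(v)) = \pi\int G_{\mathcal U}(w,w')\mu_\delta^v(dw)\mu_\delta^v(dw') < \infty$, which holds because $G_{\mathcal U}$ has only a logarithmic singularity on the diagonal and $\mu_\delta^v\times\mu_\delta^v$ integrates $\log|w-w'|^{-1}$ against a circle, giving $\cov(h_\delta^{\mathcal U}(v),h_\delta^{\mathcal U}(v)) = -\log\delta + O_{\mathcal U}(1)$ uniformly on $\mathcal U_\delta$. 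Once integrability is in hand, all the interchanges of integrals are justified by Tonelli since every integrand is nonnegative, and the symmetry plus Chapman–Kolmogorov steps are standard properties of the killed heat kernel $p_{\mathcal U}$ defined in \eqref{eq:heat_kernel}. I would also remark that the restriction $\delta < r_{\mathcal U}$ is exactly what guarantees $\mathcal U_\delta \neq \emptyset$ so that the statement is non-vacuous.
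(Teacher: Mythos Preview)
Your proposal is correct and takes essentially the same approach as the paper: both reduce to matching covariance kernels and use the Chapman--Kolmogorov identity (the paper phrases it as ``Fubini's theorem and the Markov property of Brownian motion'') to collapse $\int_{(0,\infty)}\int_{\mathcal U} p_{\mathcal U}(s/2;w,z)p_{\mathcal U}(s/2;w',z)\,dz\,ds$ into $G_{\mathcal U}(w,w')$. Your additional discussion of the $L^2$ integrability justifying the white-noise pairing is a welcome detail that the paper leaves implicit.
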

\begin{proof}
We only need to show that the covariance kernels are 
the same for these two processes. For $v, v'\in \mathcal U_\delta$, the covariance between $\hat h^{\mathcal U}_\delta(v)$ and $\hat h^{\mathcal U}_\delta(v')$ can be easily computed by Fubini's theorem and the Markov property of Brownian motion as shown below:
\begin{align*}
&\cov\big(\hat h^{\mathcal U}_\delta(v), \hat h^{\mathcal U}_\delta(v')\big) = \pi \int_{\partial B_\delta(v) \times \partial B_\delta(v')}\big(\int_{\R^2 \times \R^+}p_{\mathcal U}(s/2; w, z)p_{\mathcal U}(s/2; z, w')dzds\big)\mu_\delta^v(dw)\mu_\delta^{v'}(dw')\\
=&\pi \int_{\partial B_\delta(v) \times \partial B_\delta(v')}\big(\int_{\R^+}p_{\mathcal U}(s; w, w')ds\big)\mu_\delta^v(dw)\mu_\delta^{v'}(dw') =\pi \int_{\partial B_\delta(v) \times \partial B_\delta(v')}G_{\mathcal U}(w, w')\mu_\delta^v(dw)\mu_\delta^{v'}(dw')\,,
\end{align*}
where in the last step we used \eqref{eq:Green_fxn}. Comparing with \eqref{eq:GFF_cov}, we 
can now conclude that the two Gaussian processes are identically distributed. 
\end{proof}

We are interested in the white noise decomposition $\hat h_\delta^{\mathcal U}$ for the reason  that (as shown in Proposition~\ref{prop:coupling}) it is well-approximated  by a new family of Gaussian processes $\eta \coloneqq \{\eta_\delta^{\delta'}(v): v \in \R^2, 0 < \delta < \delta' \leq 1\}$ defined as:
\begin{equation}
\label{eq:field_definition}
\eta_\delta^{\delta'}(v) = \sqrt{\pi}\int_{\R^2 \times [\delta^2, {\delta'}^2]}p(s/2; v, w)W(dw, ds)  \mbox{ where } p(s/2; v, w) =  \frac{\e^{-\frac{|v - w|^2}{s}}}{\pi s}
\end{equation}
(namely, $p(t; v, w)$ is the transition probability density 
function of a standard two-dimensional Brownian motion). We can immediately observe the following properties of $\eta$ from the representation \eqref{eq:field_definition}:
\begin{enumerate}[(a)]
\item \emph{Invariance with respect to {isometries} of the plane.} Law of $\eta$ remains the same under any {isometry} (i.e. translation, rotation, reflection etc.) of $\R^2$.
\item \emph{Scaling property.} The processes $\{\eta_{\delta^*\delta}^{\delta^*\delta'}(\delta^* v): v \in \R^2\}$ and $\{\eta_\delta^{\delta'}(v): v \in \R^2\}$ are identically distributed for all $0 < \delta < \delta' \leq 1$ and $\delta^* \in (0, 1]$.
\item \emph{Independent increment.} The processes $\{\eta_{\delta_1}^{\delta_2}(v): v \in \R^2\}$ and $\{\eta_{\delta_3}^{\delta_4}(v): v \in \R^2\}$ are independent for all $0 < \delta_1 < \delta_2 < \delta_3 < \delta_4\leq 1$.
\end{enumerate}
We will suppress the superscript $\delta'$ in $\eta_\delta^{\delta'}$ whenever $\delta' = 1$. Notice that
\begin{equation}
\label{eq:variance}
\var (\eta_\delta(v)) = \pi \int_{[\delta^2, 1]}p(s; v, v)ds = \log \delta^{-1}\,.
\end{equation}
{The following important estimate is a simple consequence of \eqref{eq:field_definition}.}
\begin{lemma}
\label{lem:smoothness}
For all $v, w \in \R^2$, we have $\var (\eta_\delta(v) - \eta_\delta(w)) \leq  \frac{|v - w|^2}{\delta^2}$. 
\end{lemma}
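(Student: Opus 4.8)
The plan is to compute the variance directly via the white-noise isometry, use the semigroup (Chapman--Kolmogorov) property of the heat kernel to collapse the spatial integral, and then estimate the remaining one-dimensional integral in $s$ with the elementary inequality $1 - \e^{-x} \le x$.

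First I would invoke the representation \eqref{eq:field_definition} and the defining covariance of white noise to write
\[
\var\big(\eta_\delta(v) - \eta_\delta(w)\big) = \pi \int_{\R^2 \times [\delta^2, 1]} \big(p(s/2; v, z) - p(s/2; w, z)\big)^2 \, dz\, ds\,.
\]
Expanding the square produces three terms of the form $\int_{\R^2} p(s/2; x, z)\, p(s/2; y, z)\, dz$ with $x, y \in \{v, w\}$; since $p(s/2;\cdot,\cdot)$ is the transition density of Brownian motion at time $s/2$, the Chapman--Kolmogorov identity gives $\int_{\R^2} p(s/2; x, z)\, p(s/2; z, y)\, dz = p(s; x, y)$. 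Hence the inner spatial integral equals $p(s; v, v) + p(s; w, w) - 2 p(s; v, w)$.

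Next I would substitute the explicit Gaussian form (consistent with \eqref{eq:field_definition}) $p(s; x, y) = (2\pi s)^{-1} \e^{-|x - y|^2/(2s)}$, which turns this expression into $\tfrac{1}{\pi s}\big(1 - \e^{-|v - w|^2/(2s)}\big)$. Plugging back in and cancelling $\pi$, the variance becomes $\int_{\delta^2}^1 s^{-1}\big(1 - \e^{-|v - w|^2/(2s)}\big)\, ds$.

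Finally, using $1 - \e^{-x} \le x$ for $x \ge 0$, the integrand is bounded by $\tfrac{|v - w|^2}{2 s^2}$, and integrating over $s \in [\delta^2, 1]$ yields $\tfrac{|v - w|^2}{2}\big(\delta^{-2} - 1\big) \le \tfrac{|v - w|^2}{\delta^2}$, which is the claim. I do not expect any real obstacle here; the only thing to watch is bookkeeping of the normalization constants (the $\sqrt{\pi}$ in \eqref{eq:field_definition} and the $p(s/2;\cdot,\cdot)$ convention), and in fact the computation gives the slightly sharper constant $\tfrac12 |v-w|^2/\delta^2$.
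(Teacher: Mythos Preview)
Your proof is correct and follows essentially the same route as the paper: both reduce the variance to $\int_{\delta^2}^{1} s^{-1}\big(1 - \e^{-|v-w|^2/(2s)}\big)\,ds$ via the white-noise isometry and the semigroup property, then apply $1-\e^{-x}\le x$. Your bookkeeping of the normalizations is accurate, and your observation that the argument actually yields the sharper bound $\tfrac{1}{2}|v-w|^2/\delta^2$ is correct.
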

\begin{proof}\belowdisplayskip=-12pt
This follows from \eqref{eq:field_definition} by straightforward computations:
\begin{align*}
\var (\eta_\delta(v) - \eta_\delta(w)) =& 2 \big(\var (\eta_\delta(v)) - \cov(\eta_\delta(v), \eta_\delta(w)) \big) = \int_{[\delta^2, 1]}\frac{1 - \e^{-\frac{|v - w|^2}{2s}}}{s}ds \\
\leq& \int_{s \in [\delta^2, 1]}\frac{|v - w|^2}{2s^2}ds \leq  \frac{|v - w|^2}{\delta^2}\,.
\end{align*}\qedhere
\end{proof}
As $\eta_\delta$ is a Gaussian process, {Lemma~\ref{lem:smoothness}} implies by Kolmogorov-Centsov theorem that there is a version of $\eta_\delta$ with 
continuous sample paths. Thus we can work with a continuous version of $\eta_\delta$ for any given $\delta$ and hence for any fixed, finite collection of $\delta$'s 
that we consider at any given instance.

{We next show that the process $h_\delta^{\mathcal U}$ is well-approximated by the process $\eta_\delta$.
\begin{proposition}
	\label{prop:coupling}
Let $V \subseteq \mathcal U_\ep$ for some $\ep > 0$. Then there exists $C_{\mathcal U, \ep} > 0$, depending only on $(\mathcal U, \ep)$, such that 
$$\P\big(\max_{v \in V}(\hat h^{\mathcal U}_\delta(v) - \eta_\delta(v)) > C_{\mathcal U, \ep}\sqrt{\log \delta^{-1}} + x\big) = \e^{-\Omega_{\mathcal U, \epsilon}(x^2)}\,$$ 
for all $\delta \in (0, \ep/4)$ and $x \geq 0$.
\end{proposition}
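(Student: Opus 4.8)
The plan is to compare the two Gaussian processes $\hat h^{\mathcal U}_\delta$ and $\eta_\delta$ by examining their white-noise representations \eqref{eq:WND_decomposition*} and \eqref{eq:field_definition}, and bounding the difference process $\Delta_\delta(v) \coloneqq \hat h^{\mathcal U}_\delta(v) - \eta_\delta(v)$ via a Dudley/Fernique-type chaining argument together with a Borell--TIS concentration inequality. First I would write out $\Delta_\delta(v)$ as a single white-noise integral: the kernel of $\hat h^{\mathcal U}_\delta(v)$ is $w \mapsto \sqrt{\pi}\int_{\partial B_\delta(v)} p_{\mathcal U}(s/2; u, w)\mu_\delta^v(du)$ integrated over $\R^2\times(0,\infty)$, while $\eta_\delta(v)$ uses $w\mapsto \sqrt{\pi}\,p(s/2; v, w)$ over $\R^2\times[\delta^2,1]$. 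Splitting the $s$-integral into the ranges $(0,\delta^2)$, $[\delta^2,1]$, and $(1,\infty)$, and using that the circle-average of the free-space heat kernel over $\partial B_\delta(v)$ differs from $p(\cdot; v, \cdot)$ only through (i) replacing the Euclidean heat kernel by the killed one $p_{\mathcal U}$ and (ii) averaging over the circle rather than evaluating at the center, one sees that the variance of $\Delta_\delta(v)$ should be $O_{\mathcal U,\ep}(1)$ uniformly in $v\in V$: the circle-averaging contributes $O(1)$ (this is morally the same computation as Lemma~\ref{lem:smoothness}, where the $|v-w|^2/\delta^2$ factor becomes $O(1)$ for points on the circle of radius $\delta$), the range $s\in(0,\delta^2)$ and $s\in(1,\infty)$ each contribute $O(1)$, and the discrepancy between $p_{\mathcal U}$ and $p$ is controlled because $V\subseteq\mathcal U_\ep$, so Brownian motion started in $V$ is unlikely to exit $\mathcal U$ in time $O(1)$, giving an $O_{\mathcal U,\ep}(1)$ bound.

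Next I would establish a modulus-of-continuity estimate for $\Delta_\delta$, i.e.\ bound $\var(\Delta_\delta(v) - \Delta_\delta(v'))$ in terms of $|v-v'|$ (and $\delta$). This again comes from the white-noise representations: the $\eta$-part was already handled in Lemma~\ref{lem:smoothness}, giving $\var(\eta_\delta(v)-\eta_\delta(v'))\le |v-v'|^2/\delta^2$, and an analogous computation with $p_{\mathcal U}$ and the circle averages (using smoothness of the heat kernel away from the diagonal, together with H\"older estimates for the killed kernel near $\partial\mathcal U$, which are harmless since $V$ is at distance $\ge\ep$ from $\partial\mathcal U$) yields a comparable polynomial-in-$|v-v'|/\delta$ bound. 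Here I expect a mild logarithmic or power-of-$\delta^{-1}$ loss, which is exactly why the statement carries the $C_{\mathcal U,\ep}\sqrt{\log\delta^{-1}}$ term rather than an $O(1)$ bound.

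With the pointwise variance bound ($O_{\mathcal U,\ep}(1)$) and the modulus estimate in hand, I would apply Dudley's entropy bound to the Gaussian field $\{\Delta_\delta(v): v\in V\}$ on the compact set $V$: the metric entropy of $V$ under the canonical (pseudo)metric $d(v,v') = \var(\Delta_\delta(v)-\Delta_\delta(v'))^{1/2}$ is controlled by the covering numbers of $V$ in Euclidean distance, and since $d(v,v')\lesssim (|v-v'|/\delta)^{\alpha}$ up to logarithmic factors, the entropy integral evaluates to $O_{\mathcal U,\ep}(\sqrt{\log\delta^{-1}})$. This gives $\E\max_{v\in V}\Delta_\delta(v)\le C_{\mathcal U,\ep}\sqrt{\log\delta^{-1}}$. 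Finally, the Borell--TIS inequality, combined with the uniform bound $\sup_{v\in V}\var(\Delta_\delta(v)) = O_{\mathcal U,\ep}(1)$, upgrades this expectation bound to the claimed Gaussian tail: $\P(\max_{v\in V}\Delta_\delta(v) > C_{\mathcal U,\ep}\sqrt{\log\delta^{-1}} + x) \le \exp(-x^2/(2\sigma^2)) = \e^{-\Omega_{\mathcal U,\ep}(x^2)}$ for all $x\ge 0$, valid once $\delta<\ep/4$ so that all circle averages $h^{\mathcal U}_\delta(v)$ for $v\in V$ are well-defined and the killed-kernel estimates apply.

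The main obstacle I anticipate is the careful bookkeeping in step one: showing that $\var(\Delta_\delta(v)) = O_{\mathcal U,\ep}(1)$ uniformly, which requires separately controlling the short-time range $s\in(0,\delta^2)$ (where $\eta_\delta$ has no contribution but $\hat h^{\mathcal U}_\delta$ does, and one must check this piece is $O(1)$ — essentially because the circle-average regularizes the singularity of the heat kernel at scale $\delta$), the long-time range $s>1$ (handled by the Green's function of $\mathcal U$ being bounded for points at mutual distance $\Omega(1)$, though here one needs $v$ not too close to $\partial\mathcal U$, which is where the $\ep$-buffer and the constant $C_{\mathcal U,\ep}$ enter), and the replacement of $p$ by $p_{\mathcal U}$ on the main range $[\delta^2,1]$. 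None of these is deep, but organizing the estimates so that the dependence on $(\mathcal U,\ep)$ is transparent and the $\delta^{-1}$-dependence collapses to $\sqrt{\log\delta^{-1}}$ is the technical heart of the proof.
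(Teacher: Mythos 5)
Your proposal is correct and follows essentially the same route as the paper: write $\Delta_\delta = \hat h^{\mathcal U}_\delta - \eta_\delta$ via the two white-noise kernels, show $\var(\Delta_\delta(v)) = O_{\mathcal U,\ep}(1)$ by splitting the time ranges $(0,\delta^2]$, $[\delta^2,1]$, $[1,\infty)$ and separating the circle-average-versus-point and $p$-versus-$p_{\mathcal U}$ discrepancies, bound the increments $\var(\Delta_\delta(v)-\Delta_\delta(w)) = O(|v-w|/\delta)$ (the paper cites Hu--Miller--Peres for the $\hat h^{\mathcal U}_\delta$ part), and then combine Dudley's entropy bound with Gaussian concentration. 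The only cosmetic difference is that the paper packages the chaining as a union bound over $\sim\delta^{-2}$ cells of diameter $\delta$ plus an $O(1)$ Dudley bound on each cell (Lemma~\ref{lem:gaussian_tail}), which is where the $\sqrt{\log\delta^{-1}}$ actually comes from, rather than from a loss in the modulus estimate as you suggest; your single global entropy integral gives the same answer.
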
}
The reader may skip the rest of the section for now, and come back to the  proof of Proposition~\ref{prop:coupling} after reading Sections~\ref{sec-construction} and \ref{sec-analysis}.
In order to prove Proposition~\ref{prop:coupling} we need the following tail bound for the maximum of Gaussian processes of a certain type. 
\begin{lemma}
\label{lem:gaussian_tail}
Let $\mathfrak R$ be a finite collection of subsets of $\R^2$ such that for every $R \in \mathfrak R$, there is a continuous, centered Gaussian process $\{Y_R(v): v \in R\}$ satisfying the following conditions:
\begin{enumerate}[(I)]
\item There exists $C > 0$ (same for all $R \in \mathfrak R$) such that for all $v, w \in R$, 
$$\var(Y_R(v) - Y_R(w)) \leq C \frac{|v - w|}{\diam(R)}$$
where $\diam(R)$ is the diameter of $R$.
\item There exists $C' > 0$ (same for all $R \in \mathfrak R$) such that $\max_{v  \in R}\var(Y_R(v)) \leq C'$.
\end{enumerate}
Then there exists $C'' > 0$, depending only on $C$ and $C'$, such that
$$\P(\max_{R \in \mathfrak R}\max_{v \in R} Y_R(v) > \sqrt{2 C'\log |\mathfrak R|} + C'' + x) \leq \e^{-x^2/2C'} \mbox{ for all }x \geq 0\,.$$
\end{lemma}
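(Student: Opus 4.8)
The plan is to deduce the lemma from two classical facts about suprema of Gaussian processes. First, Dudley's entropy bound applied to each individual process $Y_R$ will produce a bound on $\E\max_{v\in R}Y_R(v)$ that depends only on the constant $C$ in hypothesis (I) (and in particular is uniform over $R\in\mathfrak R$). Second, the Borell--TIS concentration inequality will upgrade this mean bound to a Gaussian tail with variance proxy $C'$, using hypothesis (II). A union bound over the finitely many sets in $\mathfrak R$ then supplies the $\sqrt{2C'\log|\mathfrak R|}$ term, and a one-line computation matches up the constants.

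\textbf{Step 1 (a uniform bound on $\E\max_{v\in R}Y_R(v)$).} Fix $R\in\mathfrak R$ (with $0<\diam(R)<\infty$, which is implicit in (I); the degenerate single-point case is trivial) and equip $R$ with its canonical pseudometric $d_R(v,w)=\sqrt{\var(Y_R(v)-Y_R(w))}$. Hypothesis (I) gives $d_R(v,w)\le\sqrt C\,(|v-w|/\diam(R))^{1/2}$, so $\diam_{d_R}(R)\le\sqrt C$, and any two points at Euclidean distance at most $\epsilon^2\diam(R)/C$ are at $d_R$-distance at most $\epsilon$. Since $R\subseteq\R^2$ has Euclidean diameter $\diam(R)$, its Euclidean covering number at scale $\rho\le\diam(R)$ is $O((\diam(R)/\rho)^2)$; hence $N(R,d_R,\epsilon)\le 1+O(C^2/\epsilon^4)$ for $0<\epsilon\le\sqrt C$, and $N(R,d_R,\epsilon)=1$ for $\epsilon>\sqrt C$. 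Dudley's bound then gives, for a universal constant $K$,
$$\E\max_{v\in R}Y_R(v)\le K\int_0^{\sqrt C}\sqrt{\log N(R,d_R,\epsilon)}\,d\epsilon\le K\int_0^{\sqrt C}\sqrt{\,\log\!\big(1+O(C^2/\epsilon^4)\big)\,}\,d\epsilon\,,$$
and the right-hand side is a finite constant $C''=C''(C)$ depending only on $C$ (all dependence on $\diam(R)$ has cancelled). In particular $\max_{v\in R}Y_R(v)$ is a.s. finite, and by sample-path continuity and separability of $R$ it is measurable, so the events in the statement make sense.

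\textbf{Step 2 (concentration and union bound).} For fixed $R$, hypothesis (II) gives $\sup_{v\in R}\var(Y_R(v))\le C'$, so the Borell--TIS inequality combined with Step 1 yields
$$\P\Big(\max_{v\in R}Y_R(v)>C''+t\Big)\le\e^{-t^2/2C'}\qquad(t\ge 0)\,.$$
Taking a union bound over the $|\mathfrak R|$ sets and setting $t=\sqrt{2C'\log|\mathfrak R|}+x$, the inequality $t^2\ge 2C'\log|\mathfrak R|+x^2$ gives $\e^{-t^2/2C'}\le|\mathfrak R|^{-1}\e^{-x^2/2C'}$, hence
$$\P\Big(\max_{R\in\mathfrak R}\max_{v\in R}Y_R(v)>\sqrt{2C'\log|\mathfrak R|}+C''+x\Big)\le|\mathfrak R|\,\e^{-t^2/2C'}\le\e^{-x^2/2C'}\,,$$
which is the claim, with $C''$ the constant produced in Step 1.

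The entire content sits in Step 1, and the only point requiring care is that the Dudley integral be bounded \emph{uniformly} over $\mathfrak R$, i.e. independently of $\diam(R)$ (and of $|\mathfrak R|$). This is exactly what the scale-free form of hypotheses (I) and (II) buys us: (I) says $Y_R$ is H\"older-$1/2$ in the rescaled variable $v/\diam(R)$ with a universal modulus, so after rescaling each $R$ to diameter $1$ the metric entropy depends only on $C$. I do not expect any genuine obstacle beyond this bookkeeping; alternatively one could run a direct chaining argument and a direct Gaussian-concentration estimate in place of invoking Dudley and Borell--TIS, but that would only amount to reproving those inequalities.
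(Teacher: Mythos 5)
Your proof is correct and follows essentially the same route as the paper's: Dudley's entropy bound gives a uniform constant bound on $\E\max_{v\in R}Y_R(v)$ from hypothesis (I), and Gaussian concentration (Borell--TIS) with variance proxy $C'$ plus a union bound over $\mathfrak R$ finishes the argument. The only immaterial difference is that you union-bound the tail probabilities directly, whereas the paper first bounds $\E\max_{R\in\mathfrak R}\max_{v\in R}Y_R(v)$ via the integral identity and then applies concentration once more to the global maximum.
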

\begin{proof}
Applying Dudley's entropy bound on the expected supremum of a Gaussian process (see, e.g., \cite[Theorem~4.1]{A90}), we get from Condition~$(I)$ that
\begin{equation}
\label{eq:gaussian_tail1}
\E \max_{v \in R}Y_R(v) \leq C'''\,
\end{equation}
for any $R \in 
\mathfrak R$ and some $C''' > 0$ depending on $C$. Also Condition~$(II)$ and the Gaussian concentration inequality (see e.g., \cite[Equation (7.4), Theorem~7.1]{L01}) give us
\begin{equation}
\label{eq:gaussian_tail2}
\P(\max_{v \in R}Y_R(v) - \E \max_{v \in R}Y_R(v) > x) \leq \e^{-x^2 / 2C'}\,
\end{equation}	
for all $x \geq 0$. Recalling the identity
\begin{equation*}
\label{eq:gaussian_tail3}
\E \max(X, 0) = \int_{[0, \infty)}\P(X > x)dx\,,
\end{equation*}
we then get from \eqref{eq:gaussian_tail1} and \eqref{eq:gaussian_tail2} {and a union bound at the second step} that (we assume $|\mathfrak R| > 1$)
\begin{eqnarray}
\label{eq:gaussian_tail4}
\E \max_{R \in \mathfrak R}(\max_{v \in R}Y_R(v)) &\leq& \max_{R \in \mathfrak R}\E \max_{v \in R}Y_R(v) + \E \max_{R \in \mathfrak R}\big(\max_{v \in R}Y_R(v) - \E \max_{v \in R}Y_R(v)\big)\nonumber \\
&\leq& C''' + \sqrt{2C'\log |\mathfrak R|} + \int_{(\sqrt{2C'\log |\mathfrak R|}, \infty)}|\mathfrak R|\e^{-x^2/2C'}dx \nonumber \\
&\leq& C''' + \sqrt{2C'\log |\mathfrak R|} + \int_{[0, \infty)}\e^{-y^2/2C'}dy \nonumber \\
&\leq&  C''' +\sqrt{2C'\log |\mathfrak R|}  + O(\sqrt{C'})\,.
\end{eqnarray}
Furthermore, we have from Condition~$(II)$ and the Gaussian concentration inequality,
$$\P(\max_{R \in \mathfrak R}\max_{v \in R}Y_R(v) - \E \max_{R \in \mathfrak R}\max_{v \in R}Y_R(v) > x) \leq \e^{-x^2 / 2C'} \mbox{ for all } x \geq 0\,.$$
This together with \eqref{eq:gaussian_tail4} yields the desired tail bound.
\end{proof}

\begin{proof}[Proof of Proposition~\ref{prop:coupling}] For convenience let us use $\Delta_\delta(v)$ to denote $\hat h^{\mathcal U}_\delta(v) - \eta_\delta(v)$ and suppose that the following is true for all $v, w \in \mathcal U_{\ep}$ such that $|v - w| \leq \delta$:
\begin{align}
\var(\Delta_\delta(v) - \Delta_\delta(w)) &= O\Big(\frac{|v - w|}{\delta}\Big)\,,\label{eq:var_diff}\\
	 \max_{v \in \mathcal U_\ep}\var(\Delta_\delta(v)) &= O_{\mathcal U, \ep}(1)\,.	\label{eq:coupling1}
	\end{align}
Now subdivide $V$ into rectangles of diameter at most $\delta$ in some arbitrary but fixed fashion such that the resulting collection of rectangles $\mathfrak R$ satisfies 
$|\mathfrak R| \leq 8\delta^{-2}$. It is then clear that the conditions of Lemma~\ref{lem:gaussian_tail} are satisfied for this collection with $Y_R \coloneqq 
\{\Delta_\delta(v) : v \in R\}$, $C = O(1)$ and $C' = 
O_{\mathcal U, \ep}(1)$. Therefore it remains to verify \eqref{eq:var_diff} and \eqref{eq:coupling1}.
	
We first prove \eqref{eq:coupling1}. We remark that despite the fact that the proof is somewhat long, the analysis is all standard. For convenience we introduce some notations. We denote $p(s; v, w) - p_{\mathcal U}(s; v, w)$ by $\overline p_{\mathcal U}(s; 
v, w)$. Also for any function $f$ defined on $\R^+ \times \mathcal U_\epsilon \times \mathcal U_\epsilon$ and $\delta \leq \epsilon$, we let $f^\delta(\cdot; v, \cdot)=\int_{\partial B_\delta(v)}f(\cdot; v', \cdot)\mu_\delta^v(dv')$ be the circle average of $f$. Now notice, {from \eqref{eq:WND_decomposition*} and \eqref{eq:field_definition}}, that we can decompose $\Delta_\delta(v)$ into four components as follows:
\begin{equation*}
\Delta_\delta(v) = \sqrt{\pi} (G_{v; 1} + G_{v; 2} + G_{v; 3} + G_{v; 4})\,,
\end{equation*}
where 
\begin{align*}
& G_{v; 1} = \int_{\mathcal U \times [1, \infty)}p_{\mathcal U}^\delta(s/2; v, w)W(dw, ds)\,, G_{v; 2} = \int_{\mathcal U \times (0, \delta^2]}p_{\mathcal U}^\delta(s/2; v, w)W(dw, ds)\,,\\
& G_{v; 3} = -\int_{\R^2 \times [\delta^2, 1]}\overline p_{\mathcal U}^\delta(s/2; v, w)W(dw, ds) \mbox{ and } G_{v; 4} = \int_{\R^2 \times [\delta^2, 1]}\big(p^\delta(s/2; v, w) - p(s/2; v, w)\big)W(dw, ds)\,.
\end{align*}
We will show that 
the variance of each component is $O_{\mathcal U, \epsilon}(1)$. Let us begin with $\var (G_{v; 1})$. Observe that
\begin{eqnarray}
\label{eq:varG_1_1}
\var(G_{v; 1}) &=& \int_{[1, \infty)}\int_{\partial B_\delta(v) \times \partial B_\delta(v)}p_{\mathcal U}(s; v', v'')\mu_\delta^v(dv')\mu_\delta^v(dv'')ds \nonumber \\
&=& \int_{[1, \infty)}\int_{\partial B_\delta(v) \times \partial B_\delta(v)}p(s; v', v'')P^{\mathcal U}(s; v', v'')\mu_\delta^v(dv')\mu_\delta^v(dv'')ds\,,
\end{eqnarray}
where $P^{\mathcal U}(s; v', v'')$ is the probability that a (two-dimensional) Brownian bridge with starting point $v'$, ending point $v''$ and duration $s$ stays within 
$\mathcal U$. This Brownian bridge is identically distributed as the process $\{W_t - \tfrac{t}{s}(W_s - v'' + v') + v'\}_{t \in [0, s]}$ where $\{W_t\}_{t \in [0, s]} \coloneqq \{(W_{1, t}, W_{2, t})\}_{t \in [0, s]}$ is a standard 
two-dimensional Brownian motion starting at the origin. A simple upper bound on $P^{\mathcal U}(s; v', v'')$ can be given as
\begin{align*}
P^{\mathcal U}(s; v', v'') \leq \P(W_{s/2} - \tfrac{1}{2}(W_s - v'' + v') + v' \in \mathcal U)\leq \P\big(|W_{s/2} - \tfrac{1}{2}W_s| - \tfrac{1}{2}|v' - v''|\leq \mathrm{diam}(\mathcal U) \big)\,.
\end{align*}
Since $(W_{i,s/2} - \tfrac{1}{2}W_{i,s})$'s are independent Gaussian random variables with mean zero and variance $s/4$, $|W_{s/2} - \tfrac{1}{2}W_s|^2$ is distributed as an 
exponential variable with mean $s/2$. Hence the probability on the right hand side in the last display is bounded by
$O(1)(1 - \e^{-O \big(\frac{(\mathrm{diam}(\mathcal U) + |v' - v''|)^2}{s}\big)})$.
Plugging this into \eqref{eq:varG_1_1} we get (using the fact that $1 - \e^{-x} \leq x$ for all $x\geq 0$)
\begin{eqnarray*}
\var(G_{v; 1}) &=& O(1)\int_{[1, \infty)}s^{-1}(1 - \e^{-O \big(\frac{(\mathrm{diam}(\mathcal U) + |v' - v''|)^2}{s}\big)})ds \nonumber \\
&=& O(1)\int_{[1, \infty)}s^{-2}(\mathrm{diam}(\mathcal U)+ |v' - v''|)^2ds = O(\mathrm{diam}(\mathcal U)^2)\,.
\end{eqnarray*}
Next, we bound $\var (G_{v; 2})$. We see that
\begin{eqnarray*}
\var(G_{v; 2}) &=& \int_{(0, \delta^2]}\int_{\partial B_\delta(v) \times \partial B_\delta(v)}p_{\mathcal U}(s; v', v'')\mu_\delta^v(dv')\mu_\delta^v(dv'')ds \nonumber \\
&\leq& \int_{(0, \delta^2]}\int_{\partial B_\delta(v) \times \partial B_\delta(v)}p(s; v', v'')\mu_\delta^v(dv')\mu_\delta^v(dv'')ds \nonumber \\
&=& O(1)\int_{(0, \delta^2]}s^{-1}\int_{[0, 2\pi]}\e^{-\frac{\delta^2(1 - \cos \theta)}{s}}d\theta ds = O(1)\int_{(0, \delta^2]} \frac{\sqrt{s}}{\delta}s^{-1}ds = O(1)\,.
\end{eqnarray*}
Here we used the fact that $\int_{[0, 2\pi]}\e^{-\frac{\delta^2(1 - \cos \theta)}{s}}d\theta =2 \int_{[0, \pi]}\e^{-\frac{\delta^2 \cdot 2\sin^2(\theta/2)}{s}}d\theta = O(\sqrt{s}/\delta)$, since the main contribution to the integration comes from $\theta = O(\sqrt{s}/\delta)$. 
For $\var(G_{v; 3})$ we start with the following upper bound (see the expression of $G_{v, 3}$):
\begin{eqnarray}
\label{eq:varG_3_1}
\var(G_{v; 3}) &=& \int_{[\delta^2, 1]}\int_{\partial B_\delta(v) \times \partial B_\delta(v)} (p(s; v', v'') + p_{\mathcal U}(s; v', v''))\mu_\delta^v(dv')\mu_\delta^v(dv'')ds \nonumber\\
&& - 2\int_{[\delta^2, 1]}\int_{\partial B_\delta(v) \times \partial B_\delta(v)}\int_{\R^2}p_{\mathcal U}(s/2; v', z) p(s/2; v'', z)dz\mu_\delta^v(dv')\mu_\delta^v(dv'')ds \nonumber \\
&\stackrel{p_{\mathcal U} \leq p}{\leq}& \int_{[\delta^2, 1]}\int_{\partial B_\delta(v) \times \partial B_\delta(v)} (p(s; v', v'') + p_{\mathcal U}(s; v', v''))\mu_\delta^v(dv')\mu_\delta^v(dv'')ds \nonumber\\
&& - 2\int_{[\delta^2, 1]}\int_{\partial B_\delta(v) \times \partial B_\delta(v)}\int_{\R^2}p_{\mathcal U}(s/2; v', z) p_{\mathcal U}(s/2; v'', z)dz\mu_\delta^v(dv')\mu_\delta^v(dv'')ds \nonumber\\
&=&\int_{[\delta^2, 1]}\int_{\partial B_\delta(v) \times \partial B_\delta(v)} (p(s; v', v'') - p_{\mathcal U}(s; v', v''))\mu_\delta^v(dv')\mu_\delta^v(dv'')ds\nonumber \\
&=& \int_{[\delta^2, 1]}\int_{\partial B_\delta(v) \times \partial B_\delta(v)}p(s; v', v'') P_*^{\mathcal U}(s; v', v'')\mu_\delta^v(dv')\mu_\delta^v(dv'')ds\,,
\end{eqnarray}
where $P_*^{\mathcal U}(s; v', v'')$ is the probability that a Brownian bridge  with starting point $v'$, ending 
point $v''$ and  duration $s$  hits $\partial \mathcal U$. 
Like $P^{\mathcal U}(s; v', v'')$, we can bound $P_*^{\mathcal U}(s; v', v'')$ in terms of $\{W_t\}_{t \in [0, s]}$ as follows:
\begin{eqnarray*}
P_*^{\mathcal U}(s; v', v'') &\leq& \P\big(\max_{t \in [0, s]} |W_t - \tfrac{t}{s}(W_s - v''+ v')| \geq d_{\ell_2}(v', \partial \mathcal U)\big)\\ 
&\leq& \P\big(M_{1,s} - m_{1,s} + M_{2,s} - m_{2,s} \geq (d_{\ell_2}(v', \partial \mathcal U) - |v' - v''|)_+ \big)
\end{eqnarray*}
where $x_+ = \max(x, 0)$ for any real number $x$ and $M_{i,s} = \max_{t \in [0, s]}W_{i,t}$ and $m_{i,s} = \min_{t 
\in [0, s]}W_{i,t}$ for $i \in [2]$. Since $M_{i,s}^2$ and $m_{i,s}^2$ are distributed as $W_{i,s}^2$, they are Gamma random variables with shape parameter $1/2$ and mean $s$. 
Hence the probability above is bounded by
\begin{equation*}
P_*^{\mathcal U}(s; v', v'') = O(1)\e^{-\Omega\big(\frac{[(d_{\ell_2}(v', \partial \mathcal U) - |v' - v''|)_+]^2}{s}\big)}\,.
\end{equation*}
Combined with \eqref{eq:varG_3_1}, it yields that (recalling $d_{\ell_2}(v, \partial \mathcal U) \geq \epsilon$ and $2\delta < \ep/2$)
\begin{eqnarray*}
\var(G_{v; 3}) \leq O(1) \int_{[\delta^2, 1]} s^{-1}\max_{v', v''\in \partial B_\delta(v)}\e^{-\Omega\big(\frac{[(d_{\ell_2}(v', \partial \mathcal U) - |v' - v''|)_+]^2}{s}\big)}ds = O_\epsilon(1)\,.
\end{eqnarray*}
We are only left with $\var(G_{v; 4})$ now. Notice that
\begin{eqnarray*}
\var(G_{v; 4}) &=& \int_{[\delta^2, 1]}\int_{\partial B_\delta(v) \times \partial B_\delta(v)}p(s; v', v'')\mu_\delta^v(dv')\mu_\delta^v(dv'')ds - 2\int_{[\delta^2, 1]}\int_{\partial B_\delta(v)}p(s; v', v)\mu_\delta^v(dv')ds \nonumber \\
&& + \int_{[\delta^2, 1]}p(s; v, v)ds\nonumber \\
&=& I_1 -2I_2 + I_3\,.
\end{eqnarray*}
Since $p(s; v', v'') \leq (2\pi)^{-1} s^{-1}$ (see \eqref{eq:field_definition}), it follows that $I_1$ and $I_3$ are bounded above by 
$(2\pi)^{-1}\int _{[\delta^2, 1]}s^{-1} ds$. In addition,
\begin{eqnarray*}
I_2 = (2\pi)^{-1}\int _{[\delta^2, 1]}\e^{-\frac{\delta^2}{2s}}s^{-1} ds \geq (2\pi)^{-1}\int _{[\delta^2, 1]}(1 - \delta^2(2s)^{-1})s^{-1} ds\,.
\end{eqnarray*}
Putting all these estimates together we get $\var(G_{v; 
4}) = O(1)$. Thus $\var(\Delta_\delta(v)) = O_{\mathcal U, \epsilon}(1)$ for all $v \in {\mathcal U}_\epsilon$. 

To verify \eqref{eq:var_diff}, first observe that
$$\Delta_\delta(v) - \Delta_\delta(w) = (\hat h^{\mathcal U}_\delta(v) - \hat h^{\mathcal U}_\delta(w)) - (\eta_\delta(v) - \eta_\delta(w))\,.$$
Therefore it suffices to prove similar bounds for each of $\var(\hat h^{\mathcal U}_\delta(v) - \hat h^{\mathcal U}_\delta(w))$ and 
$\var(\eta_\delta(v) - \eta_\delta(w))$. The latter follows from Lemma~\ref{lem:smoothness}. The bound on  $\var(\hat h^{\mathcal U}_\delta(v) - \hat h^{\mathcal U}_\delta(w))$ was derived (in a more general set-up) in the proof of \cite[Proposition~2.1]{HMPeres2010}.\qedhere
\end{proof}

\section{Inductive construction of light paths} \label{sec-construction}
{In this section we will describe our construction of a light path between the shorter boundaries of $V^\Gamma$ when the underlying process is $\eta_{2^{-n}}$ (see \eqref{eq:field_definition} for its definition) --- the construction proceeds via an induction on $n\geq 1$ which is regarded as the scale. Section~\ref{subsec_strategy_broad} contains an overview, and a detailed description is provided in Section~\ref{subsec:strat2} except the optimization strategy in the inductive construction which we discuss in Section~\ref{subsec:construct_strat2}.

\subsection{An overview of the inductive construction}
\label{subsec_strategy_broad}
The main idea is inspired by the construction in 
Section~\ref{sec-history}. As already mentioned in Section~\ref{sec-history}, in place of $L_{i, j}$ (as well as $L_{j, \mathrm{slope}}^i$) we will use a thin rectangular strip $R_{i, j}$ (respectively $S_{i, j}$) of width $\Gamma^{-2}$ with similar length and 
orientation. By Property (c) of $\eta$, we can write $\eta_{2^{-n}}$ as the sum of three independent processes, namely, $\eta_{2^{-n}}^{\Gamma^{-2}} + \eta_{\Gamma^{-2}}^{0.5} + \eta_{0.5}$ for $n\geq 2m_\Gamma$ (recall $2^{m_\Gamma} = \Gamma$). Our construction will be measurable with respect to the processes $\eta_{2^{-n}}^{\Gamma^{-2}}$ and $\eta_{0.5}$. As we will see later in Lemma~\ref{lem:convex_variance}, $\eta_{0.5}$ has properties similar to those of $\zeta$ in 
Section~\ref{sec-history}. Hence we can use the strategy from Section~\ref{sec-history} to construct a tubular pathway across $V^\Gamma$ (see 
Figure~\ref{fig:StrategyII}) based on the value of 
$\eta_{0.5}$ (roughly) along $\ub R_{i, j}$'s (see Figure~\ref{fig:StrategyII_subdivision}). This pathway will essentially be a sequence of $R_{i, j}$'s and $S_{i, j}$'s with the consecutive strips linked at their 
ends. Clearly if there is a path through each such strip connecting its shorter boundaries and the paths through consecutive strips are connected at their ends, their union would  contain a path between the 
shorter boundaries of $V^\Gamma$ (see Figure~\ref{fig:StrategyII}). The construction of such paths requires two important steps, constructing efficient crossings through even smaller rectangles and joining these crossings into a connected path. We elaborate the constructions in these two steps in more detail below.

\noindent {\bf Step 1.} We subdivide each $R_{i, j}$ and $S_{i, j}$ into non-overlapping copies of the rectangle $[0, \Gamma^{-1}] \times [0, \Gamma^{-2}]$, i.e.,\ $V_{2m_\Gamma}^\Gamma$ (see Figures~\ref{fig:StrategyII_subdivision} and 
\ref{fig:sloped_gadget}) which we refer to as blocks. Thus each block has the same 
aspect ratio (i.e., the ratio of longer to shorter dimension) as that of $V^\Gamma$. We want to construct light paths through blocks when the underlying process is $\eta_{2^{-n}}^{\Gamma^{-2}}$ (recall $\eta_{2^{-n}} = \eta_{2^{-n}}^{\Gamma^{-2}} 
+ \eta_{\Gamma^{-2}}^{0.5} + \eta_{0.5}$). Let $R$ be a block and $f$ be an isometry of $\R^2$ such that 
$f(V^\Gamma_{2m_\Gamma}) = R$. The scaling property and the isometric invariance of $\eta$ imply that $\{\eta_{2^{-n}}^{\Gamma^{-2}}\big (f(\Gamma^{-2}v)\big) : v \in V^\Gamma\}$ and $\{\eta_{\Gamma^22^{-n}}(v) : v \in 
V^\Gamma\}$ are identically distributed. So if we already have an \emph{algorithm} to construct a light path connecting the shorter boundaries of $V^\Gamma$ when the underlying process is $\eta_{\Gamma^22^{-n}}$ (this corresponds to a previous scale $n - 2m_\Gamma$ as $2^{m_\Gamma}= \Gamma$), we can use the same algorithm to construct a light path $P_{f, n}$ (say) when the underlying process is $\{\eta_{2^{-n}}^{\Gamma^{-2}}\big 
(f(\Gamma^{-2}v)\big) : v \in V^\Gamma\}$. This will give us a light path $f(\Gamma^{-2}P_{f, n})$ through $R$ for 
the process $\eta_{2^{-n}}^{\Gamma^{-2}}$. 

\noindent {\bf Step 2.} We need to address the problem that the paths through two adjacent blocks in a strip or in two consecutive strips do not necessarily intersect at their ends. To this end, we construct very short paths around the junction of each pair of adjacent blocks (see 
Figure~\ref{fig:tying}) using
similar constructions at smaller scales. After all these constructions we will get a path that connects the shorter boundaries of $V^\Gamma$.}

{As hinted in the sketch above, we will eventually construct a \emph{random} collection of light paths whose union will contain the desired path. It will be convenient to formalize the notion of such collections as well as their ``weights'' (defined below)  for later analysis. To this end let $\mathcal P$ be a finite, deterministic and non-empty collection of smooth paths in $\R^2$. A \emph{(random) polypath} $\xi$ \emph{from} $\mathcal P$ is a collection of $\{0, 1\}$-valued random variables $\{J(\xi, P)\}_{P \in \mathcal P}$ such that $\cup_{P \in \mathcal P: J(\xi, P) = 1} P$ is a connected subset 
of $\R^2$. Thus one can view $\xi$ as a random sub-collection of $\mathcal P$ forming a connected set. 
We treat any smooth path $P$ as a polypath from 
$\mathcal P = \{P\}$ with $J(P, P) = 1$. We will often omit the reference to $\mathcal P$ when it is clear from 
the context and simply say that $\xi$ is a polypath.} A polypath $\xi$ is said to \emph{connect} two polypaths $\xi'$ and $\xi''$ if $\xi$ intersects $\xi'$ and $\xi''$ considered as subsets of $\R^2$. More generally we say that the polypaths $\xi_1, \xi_2, \cdots, \xi_k$ \emph{form} 
a polypath if their union is a connected subset of $\R^2$. A \emph{crossing} for ({or through}) a rectangle $R$ is any polypath $\xi$ that stays entirely within $R$ and connects 
its two shorter boundaries --- we remark that this is a slight modification of the notion of crossing used in Section~\ref{sec-history} and we stick to the new notion for the rest of the paper. {If $X = \{X(v): v \in \mathcal U\}$ is a continuous process and $\xi$ is a polypath from $\mathcal P$, then we define its \emph{weight computed with respect to $X$} (or simply the \emph{weight} when $X$ is clear from the context) 
as the quantity $\sum_{P \in \mathcal P}J(\xi, P)\int_P \e^{\gamma X(z)}|dz|$. We will denote this quantity by $\int_\xi \e^{\gamma X(z)}|dz|$.}

\subsection{A description of the inductive construction except the optimization strategy}
\label{subsec:strat2}
The goal in this subsection is to describe in details the geometric framework of our inductive construction for light crossings. 
{Fixing $\beta = \min_{k \in \N}\{2^k: 2^k \geq 
\gamma^{-2/3}\}$ (recall the choice of $\beta$ in Section~\ref{sec-history}), subdivide the strips $V_{2m_\Gamma}^{\Gamma^3; 0.75\iota}$ (recall that $\iota = \sqrt{-1}$) and $V_{2m_\Gamma}^{\Gamma^3; 0.25\iota}$ into non-overlapping translates of 
$V_{2m_\Gamma}^{\Gamma^2\beta}$. Since $V_{2m_\Gamma}^{\Gamma^2\beta}$ has length $2^{-2m_\Gamma}\Gamma^2\beta = \beta$, there are $\Gamma / \beta$ 
many translates of $V_{2m_\Gamma}^{\Gamma^2\beta}$ in each of the two strips.} Let us denote them as $R_{1, 1}, R_{1, 2}, \cdots, R_{1, \Gamma / \beta}$ and $R_{2, 1}, R_{2, 2}, \cdots, R_{2, \Gamma / \beta}$ 
respectively from left to right. Similarly one can subdivide each $R_{i, j}$ into non-overlapping translates of $V_{2m_\Gamma}^\Gamma$ which we call as its 
\emph{blocks}. See Figure~\ref{fig:StrategyII_subdivision} below for an illustration of this set-up. 
\begin{figure}[!htb]
\centering
\begin{tikzpicture}[semithick, scale = 2]
\draw (-3.8, -0.7) rectangle (3.8, 0.7);
\draw [fill = olive!20] (-3.8, 0.3) rectangle (-2.3, 0.36);
\node [scale = 0.8] at (-3.05, 0.5) {$R_{1, 1}$};

\draw [fill = olive!20] (-3.8, -0.36) rectangle (-2.3, -0.3);
\node [scale = 0.8] at (-3.05, -0.16) {$R_{2, 1}$};

\draw [fill = purple!20] (-2.3, 0.3) rectangle (-0.8, 0.36);
\node [scale = 0.8] at (-1.55, 0.5) {$R_{1, 2}$};

\draw [fill = purple!20] (-2.3, -0.36) rectangle (-0.8, -0.3);
\node [scale = 0.8] at (-1.55, -0.16) {$R_{2, 2}$};

\draw [fill = cyan!20] (0.8, 0.3) rectangle (2.3, 0.36);
\node [scale = 0.8] at (1.55, 0.5) {$R_{1, \Gamma/\beta - 1}$};
\draw [fill = cyan!20] (0.8, -0.36) rectangle (2.3, -0.3);
\node [scale = 0.8] at (1.55, -0.16) {$R_{2, \Gamma/\beta - 1}$};
\draw [fill = orange!10] (2.3, 0.3) rectangle (3.8, 0.36);
\node [scale = 0.8] at (3.05, 0.5) {$R_{1, \Gamma/\beta}$};
\draw [fill = orange!10] (2.3, -0.36) rectangle (3.8, -0.3);
\node [scale = 0.8] at (3.05, -0.16) {$R_{2, \Gamma/\beta}$};

\foreach \x in {-0.7, -0.25, ..., 0.7}
{\fill (\x, 0.33) circle [radius = 0.015];
 \fill (\x, -0.33) circle [radius = 0.015];
}

\foreach \x in {-3.6125, -3.425, ..., -2.675, -2.4875}
{\draw (\x, 0.3) -- (\x, 0.36);
 \draw (\x, -0.36) -- (\x, -0.3);
 
 \draw (-\x, 0.3) -- (-\x, 0.36);
 \draw (-\x, -0.36) -- (-\x, -0.3);
 
 \draw (\x + 1.5, 0.3) -- (\x + 1.5, 0.36);
 \draw (\x + 1.5, -0.36) -- (\x + 1.5, -0.3);
 
 \draw (-\x - 1.5, 0.3) -- (-\x - 1.5, 0.36);
 \draw (-\x - 1.5, -0.36) -- (-\x - 1.5, -0.3);
}



\draw [->] (0, 0.6) -- (0, 0.95);
\node [scale = 1, above] at (0, 0.95) {$V^\Gamma$};

\end{tikzpicture}
\caption{{\bf The rectangles $R_{i, j}$'s and its 
blocks.} In this (hypothetical) example each $R_{i, j}$ consists of 8 blocks.}
\label{fig:StrategyII_subdivision}
\end{figure}
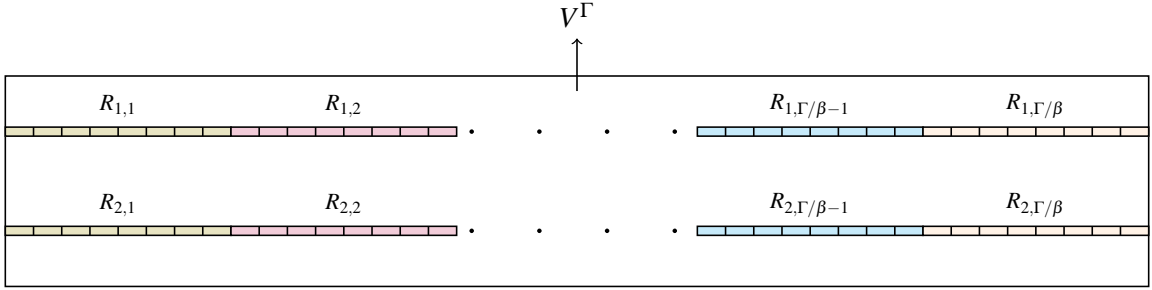

Next we describe our (crucial) sloped rectangle. Let $R_{i, j, \lt}$ and $R_{i, j, \rt}$ respectively 
denote the leftmost and rightmost blocks of $R_{i, j}$. 
\begin{lemma}\label{lem-S-1-j}
There exists a rectangle $S_{1, j}$ such that:
\begin{enumerate}[(I)]
\item $S_{1, j}$ is a copy of $V_{2m_\Gamma}^{l_\gamma\Gamma}$ for some integer $l_\gamma$.
\item The length of $S_{1, j}$, i.e., $l_\gamma \Gamma 2^{-2m_\Gamma} = l_\gamma \Gamma^{-1}$ is at most $|c_{R_{1,j, \lt}} - c_{R_{2, j, \rt}}| + 2\Gamma^{-1}$ where $c_R$ denotes the center of a rectangle $R$.
\item Each of the longer boundary segments of $S_{1, j}$ intersects each of the longer boundary segments of $R_{1, j, \lt}$ and $R_{2, j, \rt}$ (see 
Figure~\ref{fig:sloped_gadget}).
\end{enumerate}
\end{lemma}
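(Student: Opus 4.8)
The plan is to build $S_{1,j}$ explicitly as a thin slanted rectangle whose core is an extension of the segment joining the two block centres, and then to verify (I)--(III) by elementary plane geometry, the only point of substance being (III). Abbreviate $c_1 := c_{R_{1,j,\lt}}$ and $c_2 := c_{R_{2,j,\rt}}$. Since $R_{1,j,\lt}=[(j-1)\beta,(j-1)\beta+\Gamma^{-1}]\times[\tfrac34,\tfrac34+\Gamma^{-2}]$ and $R_{2,j,\rt}=[j\beta-\Gamma^{-1},j\beta]\times[\tfrac14,\tfrac14+\Gamma^{-2}]$ (recall $V^{\Gamma^2\beta}_{2m_\Gamma}$ has length $\beta$ and the blocks, which are copies of $V^\Gamma_{2m_\Gamma}$, have length $\Gamma^{-1}$), one computes $\mathrm{Re}(c_2)-\mathrm{Re}(c_1)=\beta-\Gamma^{-1}$ and $\mathrm{Im}(c_2)-\mathrm{Im}(c_1)=-\tfrac12$, whence $\tfrac12\le|c_1-c_2|\le\beta+1$. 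Put $l_\gamma:=\lceil\Gamma|c_1-c_2|\rceil+1\in\N$, so that $\Gamma|c_1-c_2|+1\le l_\gamma\le\Gamma|c_1-c_2|+2$; equivalently, setting $\tau:=\tfrac12\bigl(l_\gamma\Gamma^{-1}-|c_1-c_2|\bigr)$, one has $\tfrac12\Gamma^{-1}\le\tau\le\Gamma^{-1}$.

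Now let $u:=(c_2-c_1)/|c_2-c_1|$, let $u^{\perp}$ be $u$ rotated by $\pi/2$, and define
\[
S_{1,j}:=\bigl\{\,c_1-\tau u+s\,u+t\,u^{\perp}\ :\ s\in[0,\,l_\gamma\Gamma^{-1}],\ \ t\in[-\tfrac12\Gamma^{-2},\tfrac12\Gamma^{-2}]\,\bigr\},
\]
noting that $|c_1-c_2|+2\tau=l_\gamma\Gamma^{-1}$. This is a translation and rotation of $[0,\,l_\gamma\Gamma\cdot2^{-2m_\Gamma}]\times[0,2^{-2m_\Gamma}]=V^{l_\gamma\Gamma}_{2m_\Gamma}$, which gives (I); its length $l_\gamma\Gamma^{-1}\le|c_1-c_2|+2\Gamma^{-1}$ by the choice of $l_\gamma$, which gives (II); and its centreline $\{t=0\}$ passes through $c_1$ (at $s=\tau$) and through $c_2$ (at $s=\tau+|c_1-c_2|$).

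The substance is (III). I would prove it for $R_{1,j,\lt}$; the case of $R_{2,j,\rt}$ is identical, since $c_2$ also lies on the centreline of $S_{1,j}$ at distance $\ge\tfrac12\Gamma^{-1}$ from each short end, and $R_{2,j,\rt}$ is horizontal just like $R_{1,j,\lt}$. Pass to the orthonormal frame in which $S_{1,j}=[0,L]\times[0,\Gamma^{-2}]$ with $L:=l_\gamma\Gamma^{-1}$ and the $u$-axis horizontal. In this frame $c_1$ sits at $(\tau,\tfrac12\Gamma^{-2})$, and $R_{1,j,\lt}$ becomes a $\Gamma^{-1}\times\Gamma^{-2}$ rectangle centred there whose long axis makes an angle $\theta\in(0,\tfrac\pi2)$ with the horizontal, with $\sin\theta=\tfrac12/|c_1-c_2|\ge\tfrac1{2(\beta+1)}$. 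The decisive inequality — which is precisely the regime $\Gamma\gg\beta$ (recall $\Gamma\asymp\gamma^{-2}$, $\beta\asymp\gamma^{-2/3}$) — is
\[
\Gamma^{-1}\sin\theta\ \ge\ \frac{\Gamma^{-1}}{2(\beta+1)}\ \gg\ \Gamma^{-2}.
\]
Along each long side $\{t=\pm\tfrac12\Gamma^{-2}\}$ of $R_{1,j,\lt}$, the $y$-coordinate sweeps an interval of length $\Gamma^{-1}\sin\theta$ centred at $\tfrac12\Gamma^{-2}(1\pm\cos\theta)\in[0,\Gamma^{-2}]$; since this length dwarfs $\Gamma^{-2}$, both lines $y=0$ and $y=\Gamma^{-2}$ — i.e.\ the two long sides of $S_{1,j}$ — meet the interior of each of these two intervals. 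That the four resulting crossing points genuinely lie on the relevant segments then follows from the same inequality together with $\tau\ge\tfrac12\Gamma^{-1}$ and $|c_1-c_2|\ge\tfrac12$: each crossing point lies within $O(\beta\Gamma^{-2})$ of $(\tau,\cdot)$, and $\beta\Gamma^{-2}\ll\Gamma^{-1}\le\tau\le L-\tau$, so its $x$-coordinate stays in $[0,L]$ and the corresponding arclength parameter along $R_{1,j,\lt}$'s side stays in $[-\tfrac12\Gamma^{-1},\tfrac12\Gamma^{-1}]$.

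The only real obstacle is this last verification inside (III): turning the informal picture ``a thin strip of width $\Gamma^{-2}$ tilted by $\theta$, with $\Gamma^{-1}\sin\theta\gg\Gamma^{-2}$, crosses a coaxial thin strip cleanly from one long side to the other'' into the precise inequalities on the four intersection parameters, while simultaneously arranging the end-extensions to be long enough (there is a factor $\asymp\Gamma/\beta$ of slack) yet short enough to respect the tight length bound (II). Everything else — (I), (II), and the coordinates of $c_1,c_2$ — is immediate.
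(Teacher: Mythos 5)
Your construction is correct and follows essentially the same route as the paper: extend the segment joining $c_{R_{1,j,\lt}}$ and $c_{R_{2,j,\rt}}$ by an $O(\Gamma^{-1})$ amount to make the length an integer multiple of $\Gamma^{-1}$, take the width-$\Gamma^{-2}$ rectangle around it, and verify (III) by elementary trigonometry from the single key inequality $\Gamma^{-2}/\sin\theta = O(\beta\Gamma^{-2}) \ll \Gamma^{-1}$ (the paper phrases this as ``horizontal chords of $S_{1,j}$ have length at most $0.2\Gamma^{-1}$,'' you phrase it in the rotated frame as ``each long side of the block sweeps a transverse interval of length $\Gamma^{-1}\sin\theta \gg \Gamma^{-2}$''). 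The only cosmetic differences are the symmetric rather than one-sided extension of the centerline and the choice of frame in which the intersection inequalities are checked; both verifications are complete.
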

\begin{proof}
\begin{figure}[!htb]
\centering
\begin{tikzpicture}[semithick, scale = 2.4]

\draw (-3, 0.5) rectangle (3, 0.6);
\draw (2.5, 0.6) -- (2.5, 0.8);
\draw [->] (2.5, 0.8) -- (2.7, 0.8);
\node [scale = 1, right] at (2.7, 0.8) {$R_{1, j}$};

\draw [fill = blue!20] (-3, 0.5) rectangle (-1.6, 0.6);
\draw [red, style = {decorate, decoration = {snake, amplitude = 0.4}}] (-3, 0.55) -- (-1.6, 0.55);
\draw [->] (-1.7, 0.6) -- (-1.7, 0.8);
\node [scale = 1, above] at (-1.6, 0.8) {$R_{1, j, \lt}$};

\draw (-3, -0.5) rectangle (3, -0.6);
\draw (-2.5, -0.5) -- (-2.5, -0.3);
\draw [->] (-2.5, -0.3) -- (-2.7, -0.3);
\node [scale = 1, left] at (-2.7, -0.3) {$R_{2, j}$};

\draw [fill = blue!20] (3, -0.6) rectangle (1.6, -0.5);
\draw [red, style = {decorate, decoration = {snake, amplitude = 0.4}}] (3, -0.55) -- (1.6, -0.55);
\draw [->] (2.9, -0.5) -- (2.9, -0.3);
\node [scale = 1, above] at (2.9, -0.3) {$R_{2, j, \rt}$};

\draw (-3, 0.7) -- (3, -0.73) -- (3.04, -0.64) -- (-2.96, 0.79) -- cycle; 
\draw [red, style = {decorate, decoration = {snake, amplitude = 0.4}}] (-2.98, 0.745) -- (3.02, -0.685);
\draw [->] (-0.27, 0.05) -- (-0.8, 0.05);
\node [scale = 1, left] at (-0.8, 0.05) {$S_{1, j}$};

\foreach \x in {-3, -1.8, ..., 3}{
\def \y {0.7 - \x * 1.43/6 - 1.43/2};
\draw (\x, \y) -- (\x + 0.04, \y + 0.09);
}
\end{tikzpicture}
\caption{{\bf The rectangles $R_{1, j, \lt}$, $R_{2, j, \rt}$ and $S_{1, j}$.} Each of the five rectangles comprising $S_{1, j}$ is a copy of 
$V_{2m_\Gamma}^\Gamma$. Hence $l_\gamma = 5$ in this example. The red lines inside each rectangle indicate the corresponding crossings.}
\label{fig:sloped_gadget}
\end{figure}
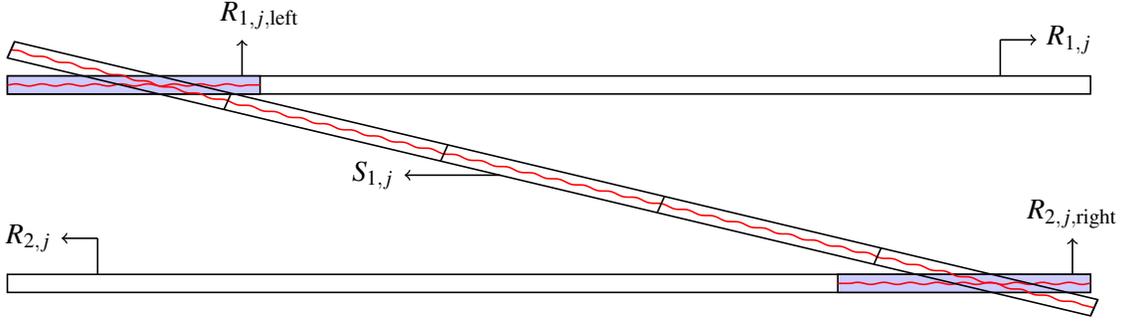

{We construct $S_{1, j}$ as follows (see Figure~\ref{fig:construct_S}). Extend the segment joining $c_{R_{1, j, \lt}}$ and $c_{R_{2, j, \rt}}$ to a segment with endpoints $c_{j, \up}$ and $c_{j, \down}$ where $\mathrm{Im}(c_{j, \up}) = 0.75 + 2\Gamma^{-2}$ and $\mathrm{Im}(c_{j, \down}) = 0.25 - \Gamma^{-2}$ (recall that the height of $\ub R_{1, j, \lt}$ is $0.75 + \Gamma^{-2}$ whereas the height of $\db 
R_{2, j, \rt}$ is 0.25). Let $c_{j, \up}'$ be the point $c_{j, \down} + t_{1, j}(c_{j, \up} - c_{j, \down})$ where $t_{1, j} = \min\{t \geq 1: t|c_{j, \up} 
- c_{j, \down}|\Gamma \in \N\}$. Now define $S_{1, j}$ as the unique copy of $V_{2m_\Gamma}^{|c_{j, \up}' - c_{j, \down}|\Gamma^2}$ for which $c_{j, \up}'$ and $c_{j, \down}$ are the midpoints of its shorter boundary 
segments. Since $l_\gamma = |c_{j, \up}' - c_{j, \down}|\Gamma$ is an integer by the definition of $c_{j, \up}'$, Condition (I) is automatically satisfied.}
\begin{figure}[!htb]
\centering
\begin{tikzpicture}[semithick, scale = 2.4]

\draw (-3, 0.5) rectangle (3, 0.6);

\draw [fill = blue!20] (-3, 0.5) rectangle (-1.6, 0.6);

\draw (-3, -0.5) rectangle (3, -0.6);

\draw [fill = blue!20] (3, -0.6) rectangle (1.6, -0.5);

\draw [dashed] (-3, 0.7) -- (3, -0.73) -- (3.04, -0.64) -- (-2.96, 0.79) -- cycle; 

\draw  (-2.98, 0.745) -- (3.02, -0.685);

\fill (-2.16, 0.55) circle [radius = 0.03];
\draw [->] (-2.16, 0.52) -- (-2.16, 0.3);
\node [scale = 1, below] at (-2.16, 0.3) {$c_{R_{1, j, \lt}}$};

\fill (-2.98, 0.745) circle [radius = 0.03];
\draw [->] (-2.98, 0.745) -- (-2.98, 0.965);
\node [scale = 1, above] at (-2.98, 0.965) {$c_{j, \up}'$};

\fill (-2.84, 0.711) circle [radius = 0.03];
\draw (-2.84, 0.711) -- (-2.84, 0.931);
\draw [->] (-2.84, 0.931) -- (-2.7, 0.931);
\node [scale = 1, right] at (-2.7, 0.931) {$c_{j, \up}$};

\fill (-2.372, 0.6) circle [radius = 0.03];
\draw (-2.372, 0.6) -- (-2.372, 0.82);
\draw [->] (-2.372, 0.82) -- (-2.232, 0.82);
\node [scale = 1, right] at (-2.232, 0.82) {$c_{j, \up}''$};

\fill (2.47, -0.56) circle [radius = 0.03];
\draw [->] (2.47, -0.56) -- (2.47, -0.34);
\node [scale = 1, above] at (2.47, -0.34) {$c_{R_{2, j, \rt}}$};

\fill (3.02, -0.685) circle [radius = 0.03];
\draw [->] (3.02, -0.685) -- (3.02, -0.905);
\node [scale = 1, below] at (3.02, -0.905) {$c_{j, \down}$};


\end{tikzpicture}
\caption{{\bf Construction of $S_{1, j}$.} The dashed lines indicate the boundary of $S_{1, j}$. The solid, slanted line is $L_{1, j}$}
\label{fig:construct_S}
\end{figure}
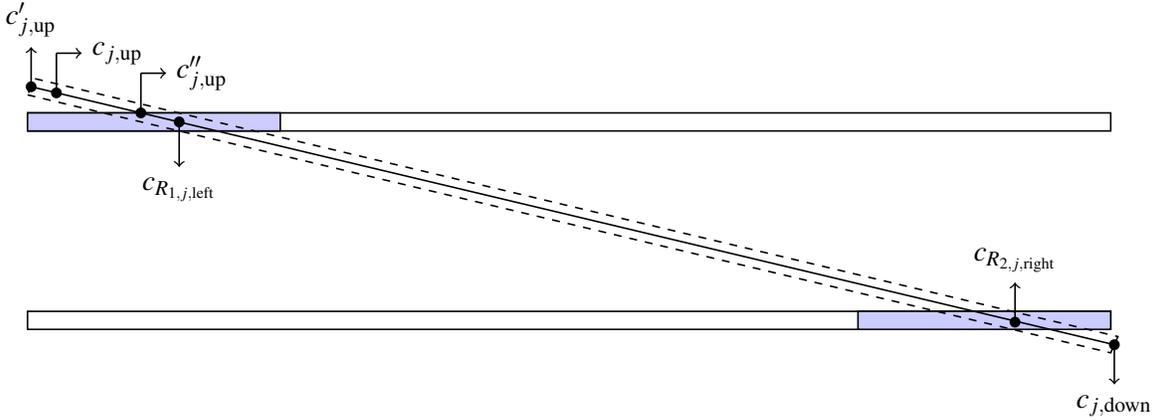

{To show that $S_{1, j}$ satisfies (II) and (III), we need the ``slope'' of the segment $L_{1, 
j}$ joining $c_{j,\up}'$ and $c_{j, \down}$. Let $\theta_{1, j}$ denote the acute angle between $L_{1, j}$ 
and the horizontal direction. We have
$$\tan \theta_{1, j} = \frac{\mathrm{Im}(c_{R_{1, j, \lt}} - c_{R_{2, j, \rt}})}{\mathrm{Re}(c_{R_{2, j, \rt}} - c_{R_{1, j, \lt}})} = \frac{(0.75 + 0.5\Gamma^{-2}) - (0.25 + 0.5\Gamma^{-2})}{\beta - 2\Gamma^{-2}} \geq 0.25 
\gamma^{2/3}\,$$
(since $\beta \leq 2\gamma^{-2/3}$). Hence
$$\mathrm{Re}(c_{R_{1, j, \lt}} - c_{j, \up}) = \frac{\mathrm{Im}(c_{j, \up} - c_{R_{1, j, \lt}})}{\tan \theta_{1, j}} \leq \frac{1.5\Gamma^{-2}}{0.25\gamma^{2/3}} = 6\gamma^{-2/3}\Gamma^{-2}\,.$$
Since $\gamma$ is small and $\Gamma \geq \gamma^{-2}$, it follows that $\mathrm{Re}(c_{R_{1, j, \lt}} - c_{j, \up}) 
\leq 0.2\Gamma^{-1}$. Similarly we get $\mathrm{Re}(c_{j, 
\down} - c_{R_{2, j, \rt}}) \leq 0.2\Gamma^{-1}$. Now triangle inequality gives us,
$$|c_{R_{1, j, \lt}} - c_{j, \up}| + |c_{j, 
\down} - c_{R_{2, j, \rt}}| \leq 2(1.5\Gamma^{-2} + 0.2\Gamma^{-1}) \leq 0.5\Gamma^{-1}\,,$$
where the last step follows from the smallness of $\gamma$. Also $|c'_{j, \up} - c_{j, \up}| \leq 
\Gamma^{-1}$ by the definition of $c_{j, \up}'$. Putting everything together we get
$$\mbox{Length of }L_{1, j} = \mbox{Length of }S_{1, j} = |c'_{j, \up} - c_{j, \down}| \leq |c_{R_{1,j, \lt}} - c_{R_{2, j, \rt}}| + 1.5\Gamma^{-1}\,,$$
which proves (II). Next let us look at the point $c_{j, \up}''$ (see Figure~\ref{fig:construct_S}) which is the point of intersection between $L_{1, j}$ and the line 
$\mathrm{Im}(w) = 0.75 + \Gamma^{-2}$. Notice that $$\mathrm{Re}(c_{R_{1, j, \lt}} - c_{j, \up}'') \leq \mathrm{Re}(c_{R_{1, j, \lt}} - c_{j, \up}) \leq 0.2\Gamma^{-1}\,.$$
Since $\Gamma^{-1}$ is the length of the rectangle $R_{1, j, \lt}$, it follows that $c_{1, j, \up}''$ lies on $\ub R_{1, j, \lt}$, i.e., $L_{1, j}$ intersects $\ub R_{1, j, \lt}$. Similar arguments hold for the intersection of $L_{1, j}$ with $\db R_{1, 
j, \lt}$, $\ub R_{2, j, \rt}$ and $\db R_{2, j, \rt}$. In fact we get the stronger statement that $L_{1, j}$ intersects the longer boundary segments of $R_{1, j, \lt}$ and $R_{2, j, \rt}$ at points which are at most $0.2 \Gamma^{-1}$ away from the midpoints of the respective 
segments. So in order to show (III) it suffices to prove that the longer boundary segments of $S_{1, j}$ intersect the lines $\mathrm{Im}(w) \in \{0.75 + \Gamma^{-2}, 0.75, 0.25 + \Gamma^{-2}, 0.25\}$ at points which are no further than $0.2\Gamma^{-1}$ away from the corresponding 
intersection points for $L_{1, j}$. The definition of $c_{j, \up}$ and $c_{j, \down}$ and the fact that the width of $S_{1, j}$ is $\Gamma^{-2}$ guarantee that each longer boundary segment of $S_{1, j}$ intersects the lines $\mathrm{Im}(w) \in \{0.75 + \Gamma^{-2}, 0.75, 0.25 + \Gamma^{-2}, 0.25\}$. The remaining part follows from the simple geometric observation that any horizontal line cuts $S_{1, j}$ in a segment of length at most
$$\frac{\mbox{width of }S_{1, j}}{\sin \theta_{1,j}} \leq \frac{\sqrt{1 + (0.25 \gamma^{2/3})^2}}{0.25\gamma^{2/3}}\Gamma^{-2} \leq 8\gamma^{-2/3}\Gamma^{-2} \leq 0.2 \Gamma^{-1}\,,$$
where again we used the smallness of $\gamma$ in the last step.}
\end{proof}
For $j \in [\Gamma/\beta]$, let $S_{1, j}$ be chosen as in Lemma~\ref{lem-S-1-j}. Similarly, we let  $S_{2, j}$ be a copy of $V_{2m_\Gamma}^{l_\gamma\Gamma}$ that joins $R_{2, j, \lt}$ and $R_{1, j, \rt}$ (so $S_{2, j}$ is obtained by reflecting $S_{1, j}$ through the horizontal line passing through the center of $S_{1, j}$). Like $R_{i, j}$'s (recall $i\in \{1, 2\}$), we subdivide $S_{i, j}$ into $l_\gamma$ non-overlapping copies of $V_{2m_\Gamma}^\Gamma$ (also called as \emph{blocks}). Henceforth we will refer to the collection of blocks of $S_{i, j}$'s and $R_{i, j}$'s as $\block_\gamma$. 

Now we are ready to describe our inductive construction of light crossings. {Suppose that for all integers $\ell \leq n - 1$, we already have an algorithm $\A_\ell$ that constructs a crossing through $V^\Gamma$ and takes only the processes 
$\{\eta_{2^{-k}}\}_{k \in [\ell]}$ as input. As described in 
Section~\ref{subsec_strategy_broad}, the scaling property and the isometric invariance of $\eta$ imply that we can use $\A_{n - 2m_\Gamma}$ to construct a crossing $\cross_{R, n}$ through each block $R \in \block_\gamma$ using only the processes $\{\eta_{2^{-\ell}}^{\Gamma^{-2}}\}_{2m_\Gamma < 
\ell \leq n}$ as input. Notice that this input is empty when $n < 2m_\Gamma$ and indeed we choose $\mathcal A_\ell$ to construct the straight line connecting the midpoints of the shorter boundary segments of $V^\Gamma$ 
for $\ell \leq 0$.} Henceforth whenever we talk about applying $\A_{\ell'}$ to construct a crossing at scale $n$ (where $\ell' \leq n$), we will suppress the statement that the processes used to construct it are $\{\eta_{2^{-\ell}}^{2^{-(n - \ell')}}\}_{n - \ell' < \ell 
\leq n}$. Furthermore for any rectangle $R \subseteq V^\Gamma$ which is a copy of $V_\ell^{\Gamma}$ for some $\ell \in \N$, we will use $\cross_{R, n}$ to denote the crossing through $R$ constructed  using $\A_{n -\ell}$. 

Next, we describe the construction which ties together the crossings through adjacent blocks. This can be achieved by a \emph{tying} technique, which we describe now in a slightly more general 
setting. The reader is referred to Figure~\ref{fig:tying} for an illustration. Let $k \in [n-1]$. Consider two adjacent copies of $V_k^\Gamma$. Without loss of generality (because of the rotational invariance property of $\eta_\delta^{\delta'}$ ), assume that their longer boundary segments are aligned 
with the horizontal axis. Call the left one as $R = I \times J$ and the right one as $R' = I' \times {J}$. {Also assume that $I = [\ell_I, r_I]$.} We want to link the crossings $\cross_{R, n}$ and $\cross_{R', n}$ to 
build a crossing for $R \cup R'$. To this end define three additional rectangles $R_{1, 2; 1} = [r_I - 2^{-k - m_\Gamma}, r_I] \times J$, $R_{1, 2; 2} = [r_I, r_I + 2^{-k - m_\Gamma}] \times J$ and $R_{1, 2; 3} = [r_I - 2^{-k - m_\Gamma}, r_I + 2^{-k - m_\Gamma}] \times [0, 2^{-k - 2m_\Gamma + 1}]$. 
Notice that all these rectangles have aspect ratio 
$\Gamma:1$. Also note that the union of crossings $\cross_{R, n}, \cross_{R', n}, \cross_{R_{1, 2; 1}, n}, \cross_{R_{1, 2; 2}, n}$ and $\cross_{R_{1, 2; 3}, n}$ is a crossing for the rectangle $R \cup R'$. We refer to this as the crossing obtained from \emph{tying} $\cross_{R, n}$ and $\cross_{R', n}$.

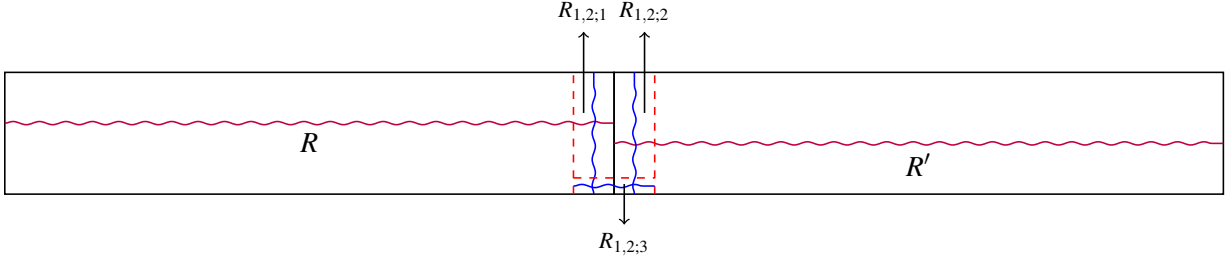
\begin{figure}[!htb]
\centering
\begin{tikzpicture}[semithick, scale = 2.7]
\draw (0, 0) rectangle (3, 0.6);
\draw [purple, style={decorate,decoration={snake,amplitude = 0.6}}] (0, 0.35) -- (3, 0.35);
\node [scale = 1, below] at (1.5, 0.35) {$R$};

\draw (3, 0) rectangle (6, 0.6);
\draw [purple, style={decorate,decoration={snake,amplitude = 0.6}}] (3, 0.25) -- (6, 0.25);
\node [scale = 1, below] at (4.5, 0.25) {$R'$};

\draw [red, dashed] (2.8, 0) -- (2.8, 0.6);
\draw [blue, style={decorate,decoration={snake,amplitude = 0.6}}] (2.9, 0) -- (2.9, 0.6);
\draw [->] (2.85, 0.4) -- (2.85, 0.8);
\node [scale = 0.8, above] at (2.85, 0.8) {$R_{1, 2; 1}$};

\draw [red, dashed] (3.2, 0) -- (3.2, 0.6);
\draw [blue, style={decorate,decoration={snake,amplitude = 0.6}}] (3.1, 0) -- (3.1, 0.6);
\draw [->] (3.15, 0.4) -- (3.15, 0.8);
\node [scale = 0.8, above] at (3.15, 0.8) {$R_{1, 2; 2}$};

\draw [red, dashed] (2.8, 0.08) -- (3.2, 0.08);
\draw [blue, style = {decorate, decoration = {snake, amplitude = 0.6}}] (2.8, 0.04) -- (3.2, 0.04);
\draw [->] (3.05, 0.05) -- (3.05, -0.15);
\node [scale = 0.8, below] at (3.05, -0.15) {$R_{1, 2; 3}$};

\end{tikzpicture}
\caption{{\bf Tying $\cross_{R, n}$ and $\cross_{R', n}$.} The crossings $\cross_{R, n}$ and $\cross_{R', n}$ are indicated by purple lines. The two vertical blue lines indicate the crossings $\cross_{R_{1, 2; 1}, n}$ (left) and $\cross_{R_{1, 2; 2}, n}$ (right). The horizontal blue line indicates the crossing $\cross_{R_{1, 2; 3}, n}$.}
\label{fig:tying}
\end{figure}

Finally, we will describe the construction which allows us to switch between top and bottom strips. We observe that if we tie all the pairs of crossings through adjacent blocks in $S_{i, j}$, we will get a crossing for $S_{i, 
j}$. Crucially, by Property (III) in Lemma~\ref{lem-S-1-j} the crossing for  $S_{i, j}$ then connects $\cross_{R_{i, j, \lt}, n}$ and $\cross_{R_{3-i, j, \rt}, n}$ (note that 
$i \mapsto 3 - i$ switches 1 and 2). Now consider an element $\mathbf i \coloneqq \{i_j\}_{j \in \Gamma / \beta}$ in 
$\{1, 2\}^{\Gamma / \beta}$. 
Given this element, we can build a collection $\mathscr 
C_{\mathbf i, n}$ of crossings as follows. Take any $j \in [\Gamma / \beta]$. If 
$i_j = i_{j + 1}$ (we adopt the convention that $i_{\Gamma/\beta +1} = i_{\Gamma/\beta}$), we include the crossing $\cross_{R, n}$ in $\mathscr C_{\mathbf i, n}$ for all the blocks $R$ of $R_{i_j, j}$. Otherwise we include $\cross_{R, n}$ for all the blocks $R$ of $S_{i_j, j}$ as well as $\cross_{R_{i_j, j, \lt}, n}$ and $\cross_{R_{3 - i_j, j, \rt}, n}$. Unless there is a switch at location~1 (in this case $S_{i, 1}$ can potentially intersect $\R^2 \setminus V^\Gamma$), the crossings in $\mathscr C_{\mathbf i, n}$ together with all the crossings that were used to tie pairs of crossings through adjacent blocks form a crossing for $V^\Gamma$. See 
Figure~\ref{fig:StrategyII} below for an illustration. Thus we are just one step short of defining the algorithm $\mathcal A_n$ namely that of choosing an appropriate $\mathbf i$. We call it the \emph{strategy} and we will choose it to be measurable with respect to $\eta_{0.5}$. 
We will \emph{derive} the strategy based on the this 
restriction in the next section. To simplify notations, we will use $\mathscr C_n$ to denote the collection of crossings corresponding to the strategy 
chosen by $\mathcal A_n$. Also we will refer to the collection of blocks $R$ such that $\cross_{R, n}$ is included in $\mathscr C_n$ from a ``location'' $j \in [\Gamma / \beta]$ as the \emph{bridge} at that location. 
\begin{figure}[!htb]
\centering
\begin{tikzpicture}[semithick, scale = 2]
\draw (-3.8, -0.7) rectangle (3.8, 0.7);
\draw [fill = olive!20] (-3.8, 0.3) rectangle (-2.3, 0.36);

\draw [fill = olive!20] (-3.8, -0.36) rectangle (-2.3, -0.3);

\draw [fill = purple!20] (-2.3, 0.3) rectangle (-0.8, 0.36);

\draw [fill = purple!20] (-2.3, -0.36) rectangle (-0.8, -0.3);

\draw [fill = cyan!20] (0.8, 0.3) rectangle (2.3, 0.36);

\draw [fill = cyan!20] (0.8, -0.36) rectangle (2.3, -0.3);

\draw [fill = orange!10] (2.3, 0.3) rectangle (3.8, 0.36);

\draw [fill = orange!10] (2.3, -0.36) rectangle (3.8, -0.3);


\foreach \x in {-0.7, -0.25, ..., 0.7}
{\fill (\x, 0.33) circle [radius = 0.015];
	\fill (\x, -0.33) circle [radius = 0.015];
}

\foreach \x in {-3.6125, -3.425, ..., -2.675, -2.4875}
{ \draw (\x, 0.3) -- (\x, 0.36);

	\draw (\x, -0.36) -- (\x, -0.3);
	\draw [blue] (\x - 0.03, -0.3) -- (\x - 0.03, -0.36);	
	\draw [blue] (\x + 0.03, -0.3) -- (\x + 0.03, -0.36);
	\draw [blue] (\x - 0.05, -0.34) -- (\x + 0.05, -0.34);

	\draw (-\x, 0.3) -- (-\x, 0.36);
	
	\draw (-\x, -0.36) -- (-\x, -0.3);
	\draw [blue] (-\x - 0.03, -0.3) -- (-\x - 0.03, -0.36);	
	\draw [blue] (-\x + 0.03, -0.3) -- (-\x + 0.03, -0.36);
	\draw [blue] (-\x - 0.05, -0.34) -- (-\x + 0.05, -0.34);

	\draw (\x + 1.5, 0.3) -- (\x + 1.5, 0.36);

	\draw (\x + 1.5, -0.36) -- (\x + 1.5, -0.3);

	\draw (-\x - 1.5, 0.3) -- (-\x - 1.5, 0.36);
	
	\draw (-\x - 1.5, -0.36) -- (-\x - 1.5, -0.3);

}
\draw [blue] (-2.3 + 0.03, -0.3) -- (-2.3 + 0.03, -0.36);
\draw [blue] (-2.3 - 0.03, -0.3) -- (-2.3 - 0.03, -0.36);
\draw [blue] (-2.3 - 0.05, -0.34) -- (-2.3 + 0.05, -0.34);

\draw [blue] (-2.3 + 1.5 - 0.03, 0.3) -- (-2.3 + 1.5 - 0.03, 0.36);
\draw [blue] (-2.3 + 1.5 - 0.05, 0.34) -- (-2.3 + 1.5, 0.34);
\draw [blue] (2.3 - 0.03, -0.3) -- (2.3 - 0.03, -0.36);
\draw [blue] (2.3 + 0.03, -0.3) -- (2.3 + 0.03, -0.36);
\draw [blue] (2.3 + 0.05, -0.34) -- (2.3 - 0.05, -0.34);

\draw [blue] (2.3 - 1.5 + 0.03, 0.3) -- (2.3 - 1.5 + 0.03, 0.36);
\draw [blue] (2.3 - 1.5 + 0.05, 0.34) -- (2.3 - 1.5, 0.34);
\foreach \x in {-3.6125,  -3.2375, ..., -2.3}{
	
	\draw [red, style = {decorate, decoration = {snake, amplitude = 0.4}}] (\x - 0.1875, -0.33) -- (\x, -0.33);
	
		

		
	\draw [red, style = {decorate, decoration = {snake, amplitude = 0.4}}] (-\x + 0.1875, -0.33) -- (-\x, -0.33);
		
		
}
\draw [red, style = {decorate, decoration = {snake, amplitude = 0.4}}] (-3.6125 + 1.5 - 0.1875, -0.33) -- (-3.6125 + 1.5, -0.33);
\draw [red, style = {decorate, decoration = {snake, amplitude = 0.4}}] (3.6125 - 1.5 + 0.1875, -0.33) -- (3.6125 - 1.5, -0.33);
\foreach \x in {-3.425, -3.05, ...,-2.3}{
	
	\draw [red, style = {decorate, decoration = {snake, amplitude = 0.4}}] (\x - 0.1875, -0.32) -- (\x, -0.32);
	
		

		
	\draw [red, style = {decorate, decoration = {snake, amplitude = 0.4}}] (-\x + 0.1875, -0.32) -- (-\x, -0.32);
		
		
}
\draw [red, style = {decorate, decoration = {snake, amplitude = 0.4}}] (-2.3 + 1.5 - 0.1875, 0.32) -- (-2.3 + 1.5, 0.32);
\draw [red, style = {decorate, decoration = {snake, amplitude = 0.4}}] (2.3 - 1.5 + 0.1875, 0.32) -- (2.3 - 1.5, 0.32);
	\draw [thin] (-2.25, -0.37) -- (-0.85, 0.38) -- (-0.81, 0.32) -- (-2.21, -0.43) -- cycle; 

	\draw [thin] (2.25, -0.37) -- (0.85, 0.38) -- (0.81, 0.32) -- (2.21, -0.43) -- cycle; 
	
	\foreach \x in {-2.075, -1.9, -1.725, -1.55, -1.375, -1.2, -1.025} {
		
		\def \y {-0.37 + \x*0.75/1.4 + 2.25*0.75/1.4};
		
		\draw (\x, \y) -- (\x + 0.04, \y - 0.06);
		
		\draw [blue] (\x - 0.025, \y - 0.025*0.75/1.14) -- (\x - 0.025 + 0.04, \y - 0.025*0.75/1.14 - 0.06);	
		\draw [blue] (\x + 0.025, \y + 0.025*0.75/1.14) -- (\x + 0.025 + 0.04, \y + 0.025*0.75/1.14 - 0.06);	
		\draw [blue] (\x - 0.04 + 0.08/3, \y - 0.04*0.75/1.14 - 0.04) -- (\x + 0.04 + 0.08/3, \y + 0.04*0.75/1.14 - 0.04);
		
		
		\draw (-\x, \y) -- (-\x - 0.04, \y - 0.06);
		\draw [blue] (-\x + 0.025, \y - 0.025*0.75/1.14) -- (-\x + 0.025 - 0.04, \y - 0.025*0.75/1.14 - 0.06);	
		\draw [blue] (-\x - 0.025, \y + 0.025*0.75/1.14) -- (-\x - 0.025 - 0.04, \y + 0.025*0.75/1.14 - 0.06);	
		\draw [blue] (-\x + 0.04 - 0.08/3, \y - 0.04*0.75/1.14 - 0.04) -- (-\x - 0.04 - 0.08/3, \y + 0.04*0.75/1.14 - 0.04);
		
		}

\foreach \x in {-2.075, -1.725, ..., -0.85} {

\def \y {-0.37 + \x*0.75/1.4 + 2.25*0.75/1.4};
						
\draw [red, style = {decorate, decoration = {snake, amplitude = 0.4}}] (\x - 0.175 + 0.02, \y - 0.175*0.75/1.4 - 0.03) -- (\x + 0.02, \y - 0.03);

\draw [red, style = {decorate, decoration = {snake, amplitude = 0.4}}] (-\x + 0.175 - 0.02, \y - 0.175*0.75/1.4 - 0.03) -- (-\x - 0.02, \y - 0.03);
			
}
			
\foreach \x in {-1.9, -1.55, ..., -0.85} {

\def \y {-0.37 + \x*0.75/1.4 + 2.25*0.75/1.4};

\draw [red, style = {decorate, decoration = {snake, amplitude = 0.4}}] (\x - 0.175 + 0.04*2/3, \y - 0.175*0.75/1.4 - 0.04) -- (\x + 0.04*2/3, \y - 0.04);

\draw [red, style = {decorate, decoration = {snake, amplitude = 0.4}}] (-\x + 0.175 - 0.04*2/3, \y - 0.175*0.75/1.4 - 0.04) -- (-\x - 0.04*2/3, \y - 0.04);
}			
	
\end{tikzpicture}
\caption{{\bf Construction of $\cross_n$.} In this example $i_1 = i_2 = 2$ and $i_3 = 1$; $i_{\Gamma / \beta - 1} = 1$ and $i_{\Gamma / \beta} = 2$. The red lines indicate the crossings in $\mathscr C_{\mathbf i, n}$ and the blue lines indicate the crossings used to tie them.}
\label{fig:StrategyII}
\end{figure}
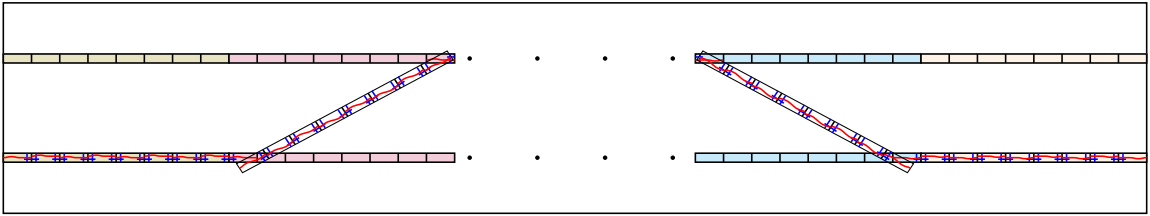
\section{Multi-scale analysis on expected weight of crossings}\label{sec-analysis}
For $n\geq 0$, let $\Lambda_{\gamma, n}$ denote the total weight of $\cross_n \coloneqq \cross_{V^\Gamma, n}$ computed with respect to $\eta_{2^{-n}}$ as the underlying process and let
$\lambda_{\gamma, n}$ denote its expectation. In Section~\ref{subsec:construct_strat2} we will describe our strategy (i.e., the strategy $\mathbf i$ mentioned at the end of Section~\ref{subsec:strat2}) and in Section~\ref{subsec:upper_bound_strat2} we will use it to derive a recurrence relation for $\lambda_{\gamma, n}$'s when $n \in \N$ (it is useful to recall at this point that $\lambda_{\gamma, m}$ should be interpreted as $\Gamma$ whenever $m \leq 0$) and show how this
relation leads to a bound on $\lambda_{\gamma, n}$. Finally in Section~\ref{sec:main_thm}, we use this bound along with Proposition~\ref{prop:coupling} to prove Theorem~\ref{thm:main}.
\subsection{Choosing the optimization strategy inductively}
\label{subsec:construct_strat2}
In this subsection we will analyze the weights for crossings following the geometric framework in Section~\ref{subsec:strat2}, which then naturally leads to our strategy. 
Since $\Lambda_{\gamma, 0}$ is trivial, we consider $n\geq 1$. We decompose $\Lambda_{\gamma, n}$ into two $\Lambda_{\gamma, n, \main}$ and $\Lambda_{\gamma, n, \gadg}$, where $\Lambda_{\gamma, n, \main}$ is the total weight of the crossings in $\mathscr C_n$ and $\Lambda_{\gamma, n, \gadg}$ is the total weight of gadgets that we use to tie pairs of crossings $(\cross_{R, n}, \cross_{R', n})$ in $\mathscr C_n$ for all adjacent $R, R'$ (see Section~\ref{subsec:strat2}). All the weights are computed with respect to the process $\eta_{2^{-n}}$. 
We will see that $\Lambda_{\gamma, n, \main}$ is the major component and will dictate our choice of the strategy.

When dealing with the weights of crossings, we will use $\mathcal P(\xi)$ to denote the finite collection of smooth paths underlying a crossing $\xi$ (see the definition of polypaths in 
Section~\ref{subsec_strategy_broad}). Our construction provides a natural way of defining these collections in an 
inductive manner. Note  that if we have already defined $\mathcal P(\cross_\ell)$, then we can define $\mathcal P(\xi)$ for any crossing $\xi$ that is constructed using $\mathcal A_\ell$ at any scale (see the discussion in the second paragraph in 
Section~\ref{subsec:strat2}). We can do this simply by applying the appropriate isometry to the elements of 
$\mathcal P(\cross_\ell)$. Now recall that when $\ell \leq 0$, $\mathcal A_\ell$ constructs a straight line connecting the midpoints of the shorter boundary 
segments of a rectangle. Thus $\mathcal P(\cross_\ell)$ in this case is simply the singleton set consisting of the 
corresponding path. For $\ell > 0$, suppose that we have already defined $\mathcal P(\cross_{\ell'})$ for all 
$\ell' < \ell$. Then, in light of discussions in Section~\ref{subsec:strat2},  there exists a finite and fixed collection of rectangles such that
\begin{itemize}
\item Through each of these rectangles a crossing can be constructed  using $\mathcal A_{\ell'}$ for some $\ell' < \ell$ and hence $\mathcal P(\xi)$ is defined for each such crossing $\xi$ (we then define $\mathcal P(\cross_\ell)$ as the union of all these $\mathcal P(\xi)$'s).  
\item A random subcollection of the aforementioned crossings (constructed  using $\mathcal A_{\ell'}$ for some $\ell' < \ell$)  forms
$\cross_\ell$.
\end{itemize}
Thus, we  see from our inductive construction that $\mathcal P(\cross_\ell)$ is a \emph{fixed} collection of paths, and each \emph{realization} of $\cross_\ell$ is formed by a sub-collection of paths in $\mathcal P(\cross_\ell)$, as desired. 

\medskip

We now turn to the choice of our strategy, whose derivation is divided into three steps. The strategy will be chosen based on an approximate expression for $\E (\Lambda_{\gamma, n, \main} | \eta_{0.5})$. 
The key is to analyze $\Lambda_{\gamma, n, \main; j}$ which is the combined weight of crossings through all 
the blocks in the bridge at location $j$ for $j\in [\Gamma/\beta]$.  

\smallskip

\noindent{\bf Step 1: Approximate $\Lambda_{\gamma, n, \main; j}$}. We first derive approximations for  $\Lambda_{\gamma, n, \main; j}$ (the bound on the 
approximation error is delayed to Section~\ref{subsec:upper_bound_strat2}, for a smoother flow of the derivation for our optimization strategy). Our approximation relies heavily on the fact that our ``desired'' strategy is determined by 
$\eta_{0.5}$. Recall the convention that $i_{\Gamma/\beta +1} = i_{\Gamma/\beta}$.

\noindent {\bf Case 1: $i_j = i_{j 
+ 1}$.} Since $\mathbf i \coloneqq \{i_j\}_{j \in [\Gamma / \beta]}$ is measurable with respect to $\eta_{0.5}$, we can write
{\begin{align*}
\label{eq:added_later1}
\E(\mathbf 1_{\{i_j = i_{j + 1}\}}\Lambda_{\gamma, n, \main; j}|\eta_{0.5}) = \mathbf 1_{\{i_j = i_{j + 1}\}} \sum_{R \in \block_\gamma, R \subseteq R_{i_j, j}} \sum_{P \in \mathcal P(\cross_{R, n})}\E\big(\int_P J(\cross_{R, n}, P)\e^{\gamma \eta_{2^{-n}}(z)}|dz| \big \vert \eta_{0.5}\big)\,,
\end{align*}
(recall from Section~\ref{subsec_strategy_broad} that $J(\cross_{R, n}, P) = \mathbf 1_{\{P  \in \cross_{R, 
n}\}}$). When $n > 2m_\Gamma$, $\cross_{R, n}$'s are constructed using the processes $\{\eta_{2^{-k}}^{\Gamma^{-2}}\}_{2m_\Gamma < k \leq n}$ which, by the independent increment, are 
independent of $(\eta_{\Gamma^{-2}}^{0.5}, \eta_{0.5})$. Also $\eta_{\Gamma^{-2}}^{0.5}$ and $\eta_{0.5}$ are independent of each other. 
Incorporating these observations into the previous display and using Fubini theorem we get
\begin{equation}
\E(\mathbf 1_{\{i_j = i_{j + 1}\}}\Lambda_{\gamma, n, \main; j}|\eta_{0.5}) = \mathbf 1_{\{i_j = i_{j + 1}\}} \sum_{R \in \block_\gamma, R \subseteq R_{i_j, j}} \sum_{P \in \mathcal P(\cross_{R, n})}\int_P a_{R, n, P}(z) b_n(z)\e^{\gamma\eta_{0.5}(z)}|dz|\,, \label{eq:conditional_expect}
\end{equation}
where $a_{R, n, P}(z) = \E (J(\cross_{R, n}, P)\e^{\gamma \eta_{2^{-n}}^{\Gamma^{-2} \vee 2^{-n}}(z)})$ and $b_n(z) = \E(\e^{\gamma \eta_{\Gamma^{-2} \vee 
2^{-n}}^{0.5}(z)})$. Notice that this expression remains valid even when $1\leq n \leq 2m_\Gamma$ ($J(\cross_{R, n}, P)$'s are deterministic in this case as $\cross_{R, n}$ is the straight line joining the midpoints of the 
shorter boundary segments of $R$). Replacing $b_n(z)$ in the expression with $1$ we get an approximation which we denote as
\begin{equation}
\label{eq-def-approx-1}
\Approx_{n, j, 1} = \mathbf 1_{\{i_j = i_{j + 1}\}} \sum_{R \in \block_\gamma, R \subseteq R_{i_j, j}} \sum_{P \in \mathcal P(\cross_{R, n})}\int_P a_{R, n, P}(z)\e^{\gamma\eta_{0.5}(z)}|dz|\,.
\end{equation}
We further approximate $\e^{\gamma \eta_{0.5}(z)}$ with $1 + \gamma \eta_{0.5}(z)$ and by Fubini we get a new approximation denoted as
\begin{eqnarray}
\label{eq-def-approx-2}
\Approx_{n, j, 2} &=& \mathbf 1_{\{i_j = i_{j + 1}\}} \sum_{R \in \block_\gamma, R \subseteq R_{i_j, j}}\E \int_{\cross_{R, n}}\e^{\gamma \eta_{2^{-n}}^{\Gamma^{-2} \vee 2^{-n}}(z)}|dz| + \mathbf 1_{\{i_j = i_{j + 1}\}}  Z_{\gamma, n, i, j} \,,\end{eqnarray}
where $$ Z_{\gamma, n, i, j} = \sum_{R \in \block_\gamma, R \subseteq R_{i, j}} \sum_{P \in \mathcal P(\cross_{R, n})}\int_P a_{R, n, P}(z)\gamma\eta_{0.5}(z)|dz|\,.$$
The integrals inside the first summation are weights of $\cross_{R, n}$'s computed with respect to 
$\eta_{2^{-n}}^{\Gamma^{-2} \vee 2^{-n}}$. Recall that we use $\mathcal A_{n - 2m_\Gamma}$ to construct these crossings and each block $R$ is a copy of 
$\Gamma^{-2}V^\Gamma$. The scaling property and the isometric invariance of $\eta$ then implies (recall $\lambda_{\gamma, m} = \Gamma$ for $m \leq 0$)
\begin{equation}
\label{eq:cost_compute_strat1_1}
\E \int_{\cross_{R, n}} \e^{\gamma \eta_{2^{-n}}^{\Gamma^{-2} \vee 2^{-n}}(z)}dz = \Gamma^{-2}\lambda_{\gamma, n - 2m_\Gamma}\,
\end{equation}
(compare to the situation when $\gamma = 0$). Plugging this into \eqref{eq-def-approx-2} we get
\begin{eqnarray}\label{eq-approximation-to-cite-1}
\Approx_{n, j, 2} = \mathbf 1_{\{i_j = i_{j + 1}\}}(\beta\Gamma^{-1} \lambda_{\gamma, n - 2m_\Gamma} +  Z_{\gamma, n, i_j, j}),
\end{eqnarray}
where $i_j \in [2]$ and we used the fact that $R_{i_j, j}$ contains $\beta\Gamma$ many blocks.} 
The small magnitude of $\gamma$ is crucial for these approximations. 

\smallskip

\noindent {\bf Case 2:  $i_j \neq i_{j + 1}$.} In this case, there is a switch at the location $j$, deriving 
{similar approximations requires slightly more work. Note that 
\begin{align*}
&\E(\mathbf 1_{\{i_j \neq i_{j + 1}\}}\Lambda_{\gamma, n, \main; j}|\eta_{0.5}) \\
=& \mathbf 1_{\{i_j \neq i_{j + 1}\}} \sum_{R \in \block_\gamma, R \subseteq S_{i_j, j} \cup R_{i_j, j, \mathrm{left}} \cup R_{3 - i_j, j, \mathrm{right}}} \sum_{P \in \mathcal P(\cross_{R, n})}\E\big(\int_P J(\cross_{R, n}, P)\e^{\gamma \eta_{2^{-n}}(z)}|dz| \big \vert \eta_{0.5}\big)\,.
\end{align*}}
As in Case 1 we can approximate the preceding expression by approximations denoted as {
\begin{equation}\label{eq-def-approx-prime}
\begin{split}
\Approx_{n, j, 1}'& =  \mathbf 1_{\{i_j \neq i_{j + 1}\}} \sum_{R \in \block_\gamma, R \subseteq S_{i_j, j} \cup R_{i_j, j, \mathrm{left}} \cup R_{3 - i_j, j, \mathrm{right}}}\sum_{P \in \mathcal P(\cross_{R, n})}\int_P a_{R, n, P}(z)\e^{\gamma\eta_{0.5}(z)}|dz|\,,\\
\Approx_{n, j, 2}' &= \mathbf 1_{\{i_j \neq i_{j + 1}\}}
\big(\big|\{R \in \block_\gamma: R \subseteq S_{i_j, j} \cup R_{i_j, j, \mathrm{left}} \cup R_{3 - i_j, j, \mathrm{right}}\}\big| \cdot \Gamma^{-2}\lambda_{n - 2m_\gamma} +   Z'_{\gamma, n, i_j, j}\big)\,,
\end{split}
\end{equation}
where
$$ Z'_{\gamma, n, i, j} = \sum_{R \in \block_\gamma, R \subseteq S_{i, j} \cup R_{i, j, \mathrm{left}} \cup R_{3 - i, j, \mathrm{right}}} \sum_{P \in \mathcal P(\cross_{R, n})}\int_P a_{R, n, P}(z)\gamma\eta_{0.5}(z)|dz|\,.$$}Recall from Lemma~\ref{lem-S-1-j} that the total number of blocks in the bridge in this 
case is $l_\gamma + 2$. {From Property~(II) in Lemma~\ref{lem-S-1-j} and the fact that $1<\beta <\Gamma$, we get
$$l_\gamma\Gamma^{-1} = \sqrt{O(1) + \beta^2} + 2\Gamma^{-1} = \beta + O(\beta^{-1})\,.$$
Hence the first term in the expression of $\Approx_{n, j, 2}'$ is 
$\mathbf 1_{\{i_j \neq i_{j + 1}\}}\big(\beta\Gamma^{-1}   + O(\beta^{-1}\Gamma^{-1})\big)\lambda_{\gamma, n - 2m_\Gamma}$. 
Defining $\loss_{\gamma, n, i, j}$ ($i \in [2]$) by
\begin{equation}
\label{eq:added_later7}
\loss_{\gamma, n, i, j} =  Z'_{\gamma, n, i, j} -  Z_{\gamma, n, i, j}\,,
\end{equation}
we obtain in this case 
\begin{equation}
\label{eq-approximation-to-cite-2}
\Approx_{n, j, 2}' =  \mathbf 1_{\{i_j \neq i_{j + 1}\}}\big(\beta\Gamma^{-1}   + O(\beta^{-1}\Gamma^{-1})\big)\lambda_{\gamma, n - 2m_\Gamma} + \mathbf 1_{\{i_j \neq i_{j + 1}\}}( Z_{\gamma, n, i_j, j}  + \loss_{\gamma, n, i_j, j})\,.
\end{equation}

\medskip

\noindent {\bf Step 2: Analyze the approximations.} In light of \eqref{eq-approximation-to-cite-1} and \eqref{eq-approximation-to-cite-2}, we next bound $\var(\sum_{j \in [N]} Z_{\gamma, n, i, j})$, $\var ( Z'_{\gamma, n, i, j})$ and $\var(\sum_{j \in [N]}( Z_{\gamma, n, 2, j} -  Z_{\gamma, n, 1, j}))$ when $N \in [\Gamma / 
\beta]$. These estimates should be compared to Properties~(b) and (c) of the process $\zeta$ in Section~\ref{sec-history}. The reader may skip the proof of the next lemma for a first time reading without interrupting the flow. 
\begin{lemma}
	\label{lem:convex_variance}
	For $n\geq 1$, we have $\var \big(\sum_{j \in [N]} Z_{\gamma, n, i, j}\big) = (\Gamma^{-1}\lambda_{\gamma, n - 2m_\Gamma})^2O(N\beta)\gamma^2$ for all $i \in [2]$ and $N \in [\Gamma / \beta]$. Also, we have $\var ( Z'_{\gamma, n, i, j}) = (\Gamma^{-1}\lambda_{\gamma, n - 2m_\Gamma})^2O(\beta)\gamma^2$ for all $i \in [2]$ and $j \in [\Gamma/\beta]$. In addition,
	\begin{equation}\label{eq-convex-variance}
	\var\big(\sum_{j \in [N]} Z_{\gamma, n, 2, j} - \sum_{j \in [N]} Z_{\gamma, n, 1, j}\big) = (\Gamma^{-1}\lambda_{\gamma, n - 2m_\Gamma})^2\Omega(N\beta)\gamma^2\,.
	\end{equation}
\end{lemma}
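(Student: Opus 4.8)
The plan is to express both $Z_{\gamma, n, i, j}$ and $Z'_{\gamma, n, i, j}$ as linear functionals of the single Gaussian field $\eta_{0.5}$ against explicit deterministic measures, and then to extract an upper (for the first two claims) or lower (for the third) bound for the variance from the white noise representation \eqref{eq:field_definition}. Write $\mu^{(i)}_j$ for the finite positive Borel measure on the strip $R_{i, j}$ determined by $\int g\, d\mu^{(i)}_j = \sum_{R \in \block_\gamma,\, R \subseteq R_{i, j}}\sum_{P \in \mathcal P(\cross_{R, n})} \int_P g(z)\, a_{R, n, P}(z)\, |dz|$, so that $Z_{\gamma, n, i, j} = \gamma \int \eta_{0.5}(z)\, d\mu^{(i)}_j(z)$, and analogously $Z'_{\gamma, n, i, j} = \gamma \int \eta_{0.5}(z)\, d\mu(z)$ for the corresponding measure $\mu$ supported on $S_{i, j} \cup R_{i, j, \lt} \cup R_{3 - i, j, \rt}$. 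The one structural input I would use repeatedly is: by \eqref{eq:cost_compute_strat1_1} each block $R \in \block_\gamma$ carries total mass $\sum_{P} \int_P a_{R, n, P}(z)\, |dz| = \Gamma^{-2}\lambda_{\gamma, n - 2m_\Gamma}$, while every $P$ with $a_{R, n, P} \not\equiv 0$ lies inside $R$ (a rectangle of length $\Gamma^{-1}$), since every realization of $\cross_{R, n}$ is a crossing for $R$ and hence stays inside $R$. As the blocks tile $R_{i, j}$ and lie consecutively along the axis of $S_{i, j}$, it follows that the horizontal projection of $\mu^{(i)}_{[N]} \coloneqq \sum_{j \leq N} \mu^{(i)}_j$ assigns to every unit interval a mass that is $O(\Gamma^{-1}\lambda_{\gamma, n - 2m_\Gamma})$, and that is $\geq \tfrac12 \Gamma^{-1}\lambda_{\gamma, n - 2m_\Gamma}$ whenever the interval lies in $[0, N\beta]$ (such an interval then contains at least $\Gamma - 2$ complete blocks); the same per-unit-interval upper bound holds for the horizontal projection of the measure associated with $Z'_{\gamma, n, i, j}$, which moreover has total mass $O(\beta \Gamma^{-1}\lambda_{\gamma, n - 2m_\Gamma})$ by Property~(II) of Lemma~\ref{lem-S-1-j}.

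For the two upper bounds, the stochastic Fubini theorem applied to \eqref{eq:field_definition} (legitimate because the relevant measure is finite and the kernels $p(s/2; \cdot, \cdot)$ are bounded for $s \geq 1/4$) shows that $\int \eta_{0.5}\, d\mu$ is centered Gaussian with variance $\iint \cov(\eta_{0.5}(z), \eta_{0.5}(z'))\, d\mu(z)\, d\mu(z')$ for any finite measure $\mu$; also a direct computation as in \eqref{eq:variance}, using $s \leq 1$, gives $\cov(\eta_{0.5}(z), \eta_{0.5}(z')) \leq (\log 2)\, \e^{-|z - z'|^2/2} \leq (\log 2)\, \e^{-|\mathrm{Re}(z - z')|^2/2}$. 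Splitting the inner integral over $z'$ by the integer part of $|\mathrm{Re}(z - z')|$ and invoking the per-unit-interval upper bound yields $\int \cov(\eta_{0.5}(z), \eta_{0.5}(z'))\, d\mu^{(i)}_{[N]}(z') = O(\Gamma^{-1}\lambda_{\gamma, n - 2m_\Gamma})$ uniformly in $z$; multiplying by the total mass $N\beta \Gamma^{-1}\lambda_{\gamma, n - 2m_\Gamma}$ and by $\gamma^2$ gives the bound on $\var(\sum_{j \in [N]} Z_{\gamma, n, i, j})$, and repeating the argument with the total mass $O(\beta \Gamma^{-1}\lambda_{\gamma, n - 2m_\Gamma})$ gives the bound on $\var(Z'_{\gamma, n, i, j})$.

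For the lower bound \eqref{eq-convex-variance}, write $\sum_{j \in [N]}(Z_{\gamma, n, 2, j} - Z_{\gamma, n, 1, j}) = \gamma \int \eta_{0.5}(z)\, d(\nu_+ - \nu_-)(z)$ with $\nu_+ = \mu^{(2)}_{[N]}$ supported on the bottom strip $\{\mathrm{Im}(z) = 0.25\}$ and $\nu_- = \mu^{(1)}_{[N]}$ on the top strip $\{\mathrm{Im}(z) = 0.75\}$; by \eqref{eq:field_definition} its variance equals $\gamma^2 \pi \iint_{\R^2 \times [1/4, 1]} \big(\Phi_+(w, s) - \Phi_-(w, s)\big)^2\, dw\, ds$, where $\Phi_\pm(w, s) = \int p(s/2; z, w)\, d\nu_\pm(z)$. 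I would lower-bound this double integral by restricting to the set $\mathcal G = \{(w, s) : s \in [1/2, 1],\ \mathrm{Re}(w) \in [\tfrac12, N\beta - \tfrac12],\ \mathrm{Im}(w) \in [0.25 - d_0 - 1,\, 0.25 - d_0]\}$, a band at a fixed absolute distance $d_0$ below the bottom strip, whose Lebesgue measure is $\Omega(N\beta)$. For $(w, s) \in \mathcal G$, with $d' \coloneqq 0.25 - \mathrm{Im}(w) \in [d_0, d_0 + 1]$, the per-unit-interval lower bound on $\nu_+$ gives $\Phi_+(w, s) \geq \tfrac{1}{\pi s}\, \e^{-(d' + \Gamma^{-2})^2/s}\, \cdot\, \tfrac12\, \e^{-1/(4s)}\, \Gamma^{-1}\lambda_{\gamma, n - 2m_\Gamma}$, while the per-unit-interval upper bound on $\nu_-$ gives $\Phi_-(w, s) \leq \tfrac{1}{\pi s}\, \e^{-(0.5 + d')^2/s}\, C'\, \Gamma^{-1}\lambda_{\gamma, n - 2m_\Gamma}$ for an absolute constant $C'$. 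Since $(0.5 + d')^2 - (d' + \Gamma^{-2})^2 \geq d'$ for $\Gamma$ large, choosing $d_0 = \log(4C') + 1$ forces $\Phi_-(w, s) \leq \tfrac12 \Phi_+(w, s)$ throughout $\mathcal G$, so that $\Phi_+(w, s) - \Phi_-(w, s) \geq c\, \Gamma^{-1}\lambda_{\gamma, n - 2m_\Gamma}$ there for an absolute $c > 0$; integrating $(\Phi_+ - \Phi_-)^2$ over $\mathcal G$ and multiplying by $\gamma^2 \pi$ yields \eqref{eq-convex-variance}.

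I expect the lower bound to be the main obstacle. It is the only part that genuinely uses the geometric input that each crossing is confined to its block --- otherwise $\mu^{(i)}_j$ could be concentrated on a short sub-interval and the per-unit-interval \emph{lower} bound would fail --- and it is the only part requiring the test band $\mathcal G$ to be placed far enough below the strips that the nearer (bottom) strip provably dominates the farther (top) one pointwise for every $s \in [1/2, 1]$; a crude second-moment bound on the covariance kernel cannot detect the near-cancellation between $\nu_+$ and $\nu_-$ that could otherwise ruin the $\Omega(N\beta)$ estimate.
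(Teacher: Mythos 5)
Your proof is correct, and for the lower bound \eqref{eq-convex-variance} it takes a genuinely different route from the paper. The paper proceeds by a three-step reduction: it first shows (via the modulus-of-continuity estimate of Lemma~\ref{lem:smoothness} for $\eta_{0.5}$ at scale $\Gamma^{-1}$) that each block's weighted path integral $Z_{\gamma,n,R}$ is within $O(\Gamma^{-2})\gamma\cdot\Gamma^{-1}\lambda_{\gamma,n-2m_\Gamma}$ in $L^2$ of the line integral $\Gamma^{-1}\lambda_{\gamma,n-2m_\Gamma}\int_{\ub R}\gamma\eta_{0.5}$, then aggregates so that $\sum_j Z_{\gamma,n,i,j}$ is replaced by a single straight-line integral $\tilde Z_{\gamma,N,i}$ over a horizontal segment of length $N\beta$, computes $\var(\tilde Z_{\gamma,N,2}-\tilde Z_{\gamma,N,1})$ exactly by Fubini, and transfers the lower bound back using $\var X \geq \var Y\,(1-2\sqrt{\var(X-Y)/\var Y})$. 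You skip the reduction to line integrals entirely: you keep the genuine path measures, extract from the block structure only the per-unit-horizontal-interval mass bounds (upper everywhere, lower on $[0,N\beta]$), and lower-bound the white-noise $L^2$ norm by restricting the $(w,s)$-integral to a test band placed at a fixed absolute distance $d_0$ below the lower strip, where the contribution of the near strip pointwise dominates that of the far one by the explicit Gaussian decay in the vertical separation. Your approach avoids Lemma~\ref{lem:smoothness} altogether and makes explicit the two upper-bound claims that the paper dismisses as ``similar but simpler''; what it costs is the careful calibration of $d_0$ against the implicit constant $C'$ to beat the near-cancellation between $\nu_+$ and $\nu_-$, which the paper's exact computation for line integrals handles automatically. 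All the individual estimates check out: the covariance bound $\cov(\eta_{0.5}(z),\eta_{0.5}(z'))\leq(\log 2)\e^{-|z-z'|^2/2}$ follows from \eqref{eq:field_definition} with $s\leq 1$, the per-unit-interval mass bounds follow from \eqref{eq:cost_compute_strat1_1} together with the confinement of every realization of $\cross_{R,n}$ to its block (and, for the tilted blocks of $S_{i,j}$, from the smallness of the tilt angle so that each block's horizontal extent is still $\Theta(\Gamma^{-1})$), and the choice $d_0=\log(4C')+1$ does force $\Phi_-\leq\tfrac12\Phi_+$ on $\mathcal G$ since $(0.5+d')^2-(d'+\Gamma^{-2})^2\geq d'$ there.
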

\begin{proof}
We will only prove \eqref{eq-convex-variance} since the other bounds follow from similar but simpler arguments. For clarity we divide the proof into three steps. First we show that for any block $R \subseteq R_{i, j}$,
\begin{equation}\label{eq:var_difference1}
\var\big( Z_{\gamma, n, R} - \Gamma^{-1}\lambda_{\gamma, n - 2m_\Gamma}\int_{\ub R}\gamma \eta_{0.5}(z)|dz| \big)
\end{equation}
is tiny where
\begin{equation}\label{eq-Z-n-R}
 Z_{\gamma, n, R} = \sum_{P \in \mathcal P(\cross_{R, n})}\int_P a_{R,n, P}(z)\gamma \eta_{0.5}(z)|dz|\,.
\end{equation}
Next we use this to show that 
\begin{equation}\label{eq:var_difference2}
\var \Big(\big(\sum_{j \in [N]} Z_{\gamma, n, 2, j} - \tilde Z_{\gamma, N, 2}\big) - \big(\sum_{j \in [N]} Z_{\gamma, n, 1, j} - \tilde Z_{\gamma, N, 1}\big)\Big)
\end{equation}
is small where 
\begin{equation}
\label{eq:z-tilde}
\tilde Z_{\gamma, N, i} = \sum_{j \in [N]}\sum_{R \in \block_\gamma, R \subseteq R_{i, j}}\Gamma^{-1}\lambda_{\gamma, n - 2m_\Gamma}\int_{\ub R}\gamma \eta_{0.5}(z)|dz| = \Gamma^{-1}\lambda_{\gamma, n - 2m_\Gamma}\int_{\ub V_{2m_\Gamma}^{N\beta\Gamma^2; v_i}}\gamma \eta_{0.5}(z)|dz|\,
\end{equation}
for $i \in [2]$, $v_1 = 0.75\iota$ and $v_2 = 0.25\iota$ (recall $\iota = \sqrt{-1}$). 
Finally, we derive a lower bound on $\var(\tilde Z_{\gamma, N, 2} - \tilde Z_{\gamma, N, 1})$ by an explicit computation which together with the bound on \eqref{eq:var_difference2} completes the proof.

Let us start with the bound on \eqref{eq:var_difference1}. 
To this end let $u \in R$. By Fubini, we can write
\begin{align*}
&\var\big(\int_{\ub R} \eta_{0.5}(z)|dz| - \Gamma^{-1}\eta_{0.5}(u) \big) \\
=& \Gamma^{-2}\int_{[0, 1]^2}\cov\Big(\eta_{0.5}(v_{R} + \bm{\Gamma^{-1}s}) - \eta_{0.5}(u), \eta_{0.5}(v_R + \bm{\Gamma^{-1}t}) - \eta_{0.5}(u)\Big)ds dt\,,
\end{align*}
where $v_{R}$ is the upper-left vertex of the rectangle $R$ (recall that $\ub R$ has length 
$\Gamma^{-1}$). Since the diameter of $R$ is $O(\Gamma^{-1})$, applying the Cauchy-Schwarz inequality and Lemma~\ref{lem:smoothness} to the last expression we get
\begin{equation}
\label{eq:convex_variance1}
\var\big(\int_{\ub R} \eta_{0.5}(z)|dz| - \Gamma^{-1}\eta_{0.5}(u) \big) = O(\Gamma^{-4})\,
\end{equation}
for any $u \in R$. Now for any path $P$ in $\mathcal P(\cross_{R, n})$, let us denote the integral $\int_Pa_{R, 
n,P}(z)|dz|$ as $q_{P, R}$. Write
	\begin{align*}
	&\int_Pa_{R, n,P}(z)\eta_{0.5}(z)|dz| - q_{P, R}\Gamma\int_{\ub R}\eta_{0.5}(z)|dz|\\
	=& \int_Pa_{R, n,P}(z)\eta_{0.5}(z)|dz| - \Gamma\int_Pa_{R, n,P}(z)|dz|\int_{\ub R}\eta_{0.5}(z')|dz'|
\\	=& \Gamma\int_Pa_{R, n,P}(z)\Big(\Gamma^{-1}\eta_{0.5}(z) - \int_{\ub R}\eta_{0.5}(z')|dz'|\Big)|dz|\,.
	\end{align*}
	Using Fubini as before, we get
	\begin{align}
		\label{eq:convex_variance1**}
		&\var \big(\int_Pa_{R,n, P}(z)\eta_{0.5}(z)|dz| - q_{P, R}\Gamma\int_{\ub R}\eta_{0.5}(z)|dz|\big) \nonumber\\
		=& \Gamma^2\int_{[0, 1]^2}a_{R, n, P}(P(s))a_{R, n, P}(P(t))C(s, t)|P'(s)|\cdot|P'(t)|ds dt\,,
		\end{align}
where
$$C(s, t) = \cov\Big(\Gamma^{-1}\eta_{0.5}(P(s)) - \int_{\ub R}\eta_{0.5}(z')|dz'|, \Gamma^{-1}\eta_{0.5}(P(t)) - \int_{\ub R}\eta_{0.5}(z')|dz'|\Big)\,.$$
Since $P(s), P(t) \in R$, we can apply the Cauchy-Schwarz inequality and \eqref{eq:convex_variance1} to obtain
$$C(s, t) \leq \sup_{u \in R}\var\big(\int_{\ub R} \eta_{0.5}(z')|dz'| - \Gamma^{-1}\eta_{0.5}(u) \big) = O(\Gamma^{-4})\,.$$
Plugging this bound into the right hand side of \eqref{eq:convex_variance1**} we get
	\begin{align}
	\label{eq:convex_variance1*}
	&\var \big(\int_Pa_{R,n, P}(z)\eta_{0.5}(z)|dz| - q_{P, R}\Gamma\int_{\ub R}\eta_{0.5}(z)|dz|\big) \nonumber\\
	=& \Gamma^2O(\Gamma^{-4})\big(\int_Pa_{R,n, P}(z)|dz|\big)^2 = O(q_{P,R}^2\Gamma^{-2})\,.
	\end{align}
Now recalling \eqref{eq-Z-n-R} and that 
$$\sum_{P \in \mathcal P(\cross_{R, n})} q_{P, R} = \E \int_{\cross_{R, n}}\e^{\gamma \eta_{2^{-n}}^{\Gamma^{-2} \vee 2^{-n}}(z)}|dz| = \Gamma^{-2}\lambda_{\gamma, n -2m_\Gamma}\,,$$
(see \eqref{eq:cost_compute_strat1_1}) we get from \eqref{eq:convex_variance1*}
\begin{align}
\label{eq:convex_variance2}
&\var\big( Z_{\gamma, n, R} - \Gamma^{-1}\lambda_{\gamma, n - 2m_\Gamma}\int_{\ub R}\gamma \eta_{0.5}(z) \big)\nonumber \\ 
=& \var \Big(\sum_{P \in \mathcal P(\cross_{R, n})}\big(\int_Pa_{R,n, P}(z)\gamma \eta_{0.5}(z)|dz| - q_{P, R}\Gamma\int_{\ub R}\gamma \eta_{0.5}(z)|dz|\big)\Big) \nonumber \\
\leq& \bigg(\sum_{P \in \mathcal P(\cross_{R, n})}\Big(\var \big(\int_Pa_{R,n, P}(z)\gamma \eta_{0.5}(z)|dz| - q_{P, R}\Gamma\int_{\ub R}\gamma \eta_{0.5}(z)|dz|\big)\Big)^{0.5} \bigg)^2 \nonumber \\
\stackrel{\eqref{eq:convex_variance1*}}{=}& O\big((\sum_{P \in \mathcal P(\cross_{R, n})}q_{P, R})^2\big)\Gamma^{-2}\gamma^2  = \big(\Gamma^{-1}\lambda_{\gamma, n - 2m_\Gamma}\big)^2O(\Gamma^{-4})\gamma^2\,.
\end{align}
Next let us bound \eqref{eq:var_difference2}. We have
\begin{align}
\label{eq:convex_variance3}
&\var \Big(\big(\sum_{j \in [N]} Z_{\gamma, n, 2, j} - \tilde Z_{\gamma, N, 2}\big) - \big(\sum_{j \in [N]} Z_{\gamma, n, 1, j} - \tilde Z_{\gamma, N, 1}\big)\Big) \leq 2 \sum_{i \in [2]}\var \big(\sum_{j \in [N]} Z_{\gamma, n, i, j} - \tilde Z_{\gamma, N, i}\big)\nonumber \\
=& 2 \sum_{i \in [2]}\var \Big( \sum_{j \in [N]} \sum_{R \in \block_\gamma, R \subseteq R_{i, j}}\big( Z_{\gamma, n , R} - \Gamma^{-1}\lambda_{\gamma, n - 2m_{\Gamma}}\int_{\ub R}\gamma \eta_{0.5}(z)|dz|\big)\Big)\nonumber \\
=& O(1) \sum_{i \in [2]} \bigg( \sum_{j \in [N]} \sum_{R \in \block_\gamma, R \subseteq R_{i, j}}\Big(\var\big( Z_{\gamma, n , R} - \Gamma^{-1}\lambda_{\gamma, n - 2m_{\Gamma}}\int_{\ub R}\gamma \eta_{0.5}(z)|dz|\big)\Big)^{0.5}\bigg)^2\nonumber \\
\stackrel{\eqref{eq:convex_variance2}}{=} & \sum_{i \in [2]} \bigg( \sum_{j \in [N]} \sum_{R \in \block_\gamma, R \subseteq R_{i, j}}\Gamma^{-1}\lambda_{\gamma, n - 2m_\Gamma}O(\Gamma^{-2})\gamma\bigg)^2 = \big(\Gamma^{-1}\lambda_{\gamma, n - 2m_\Gamma}\big)^2O(\Gamma^{-4})\gamma^2(N\beta\Gamma)^2 \nonumber \\
=& \big(\Gamma^{-1}\lambda_{\gamma, n - 2m_\Gamma}\big)^2 N\beta O(\Gamma^{-1})\gamma^2\,,
\end{align}
where in the last but one step we used that the number of blocks in $R_{i, j}$ is $\beta \Gamma$ and in 
the last step we used $N\beta \leq \Gamma$. It remains to estimate $\var (\tilde Z_{\gamma, N, 2} - 
\tilde Z_{\gamma, N, 1})$. For this purpose we can use the definition of $\eta_{0.5}(v)$ in \eqref{eq:field_definition} and Fubini to obtain:
\begin{eqnarray}
\label{eq:convex_variance4}
\var (\tilde Z_{\gamma, N, 2} - \tilde Z_{\gamma, N, 1}) &=& \var(\tilde Z_{\gamma, N, 1}) + \var(\tilde Z_{\gamma, N, 2}) - 2\cov(\tilde Z_{\gamma, N, 1}, \tilde Z_{\gamma, N, 2})\nonumber \\
&=&\big(\Gamma^{-1}\lambda_{\gamma, n - 2m_\Gamma}\big)^2\gamma^2\int_{[0, N\beta]^2 \times [0.25, 1]}s^{-1}\e^{-\frac{(x - z)^2}{2s}}(1 - \e^{-\frac{|0.75\iota - 0.25\iota|_2^2}{2s}})dx dz ds \nonumber \\
&=& \Omega\big(\Gamma^{-1}\lambda_{\gamma, n - 2m_\Gamma}\big)^2\gamma^2\int_{[0.25, 1]} \int_{[0, N\beta]}\int_{[0, N\beta]}\e^{-\frac{(x - z)^2}{2s}}dx dz ds \nonumber \\
&=&(\Gamma^{-1}\lambda_{\gamma, n - 2m_\Gamma})^2\Omega(N\beta)\gamma^2\,,
\end{eqnarray}
where in the final step we used the fact $\int_{[0, N\beta]}\e^{-\frac{(x - z)^2}{2s}}dx = \Omega(1)$ for all $z\in [0, N\beta]$ and $s\in [0.25,1]$. Since $\var X = \var Y + 2 \cov(X-Y, Y) + \var (X-Y)$ for any random variables $X, Y$ with finite second moment, we have 
$$
\var X \geq \var Y + 2\cov(X-Y, Y) \geq \var Y(1 - 2\sqrt{\var(X-Y)/\var Y}\,)\,,
$$
where the second inequality follows from Cauchy-Schwartz. Hence from \eqref{eq:convex_variance3} and \eqref{eq:convex_variance4} we get
\begin{eqnarray*}
\label{eq:convex_variance6}
\var\big(\sum_{j \in [N]} Z_{\gamma, n, 2, j} - \sum_{j \in [N]} Z_{\gamma, n, 1, j}\big) = (\Gamma^{-1}\lambda_{\gamma, n - 2m_\Gamma})^2\Omega(N\beta)\gamma^2(1 - O(\Gamma^{-1})) = (\Gamma^{-1}\lambda_{\gamma, n - 2m_\Gamma})^2\Omega(N\beta)\gamma^2\,. \qedhere
\end{eqnarray*}
\end{proof}}
{From Lemma~\ref{lem:convex_variance} we get
$$\var(\loss_{\gamma, n, i, j}) = \var 
( Z_{\gamma, n, i, j}' -  Z_{\gamma, n, i, j}) = (\Gamma^{-1}\lambda_{\gamma, n - 2m_\Gamma})^2 O(\beta)\gamma^2\,.$$
In addition, $\loss_{\gamma, n, i, j}$'s are (bounded) linear functionals of the white noise $W$ and thus are centered Gaussian 
variables. Therefore, the preceding bound implies
\begin{equation*}
\label{eq:induct_strat2_3}
\mbox{$\sum_{i \in [2]}$}\E (|\loss_{\gamma, n, i, j}|) = \Gamma^{-1}\lambda_{\gamma, n - 2m_\Gamma} O(\sqrt{\beta})\gamma\,.
\end{equation*}
Incorporating this bound into the expression for $\Approx_{n, j, 2}'$ we get the following upper bound on the expectation of $\APPROX_{n, 2} = \sum_{j \in [\Gamma / \beta]} (\Approx_{n, j, 2} + \Approx_{n, j, 2}')$: 
\begin{equation}
\label{eq:induct_strat2_4}
\E (\APPROX_{n, 2} ) \leq \lambda_{\gamma, n - 2m_\Gamma} + \E \sum_{j \in [\Gamma / \beta]}  Z_{\gamma, n, i_j, j} + C(\beta^{-1} + \gamma\sqrt{\beta})\Gamma^{-1}\lambda_{\gamma, n - 2m_\Gamma}N_\switch\,,
\end{equation}
where $C$ is a positive absolute constant and $N_\switch$ (to be selected shortly) is the number of  $j$'s where $i_j \neq i_{j+1}$ is allowed (i.e., the actually switching locations will be a random subset of these $j$'s). It might be helpful to compare this to \eqref{eq:toy_prob2}.} Since $ Z_{\gamma, n, i, j}$'s are centered, we have
\begin{equation*}
\label{eq:induct_strat2_5}
\E \sum_{j \in [\Gamma / \beta]}  Z_{\gamma, n, i_j, j} = \frac{1}{2}\E \sum_{j \in [\Gamma / \beta]}(-1)^{i_j + 1}( Z_{\gamma, n, 1, j} -  Z_{\gamma, n, 2, j})\,.
\end{equation*}

\noindent {\bf Step 3: choose our strategy.}  We naturally choose the strategy to minimize the following expectation:
\begin{equation}
\label{eq:induct_strat2_6}
E_{\gamma, n} = \E \Big( \frac{1}{2} \sum_{j \in [\Gamma / \beta]}(-1)^{i_j + 1}\Delta  Z_{\gamma, n, j} + C(\beta^{-1} + \gamma\sqrt{\beta})\Gamma^{-1}\lambda_{\gamma, n - 2m_\Gamma}N_\switch \Big)\,,
\end{equation}
where $\Delta  Z_{\gamma, n, j} =  Z_{\gamma, n, 1, j} -  Z_{\gamma, n, 2, j}$'s are centered Gaussian variables. From Lemma~\ref{lem:convex_variance} we can deduce that for any $1 \leq j_1 \leq j_2 \leq \Gamma/\beta$,
\begin{eqnarray*}
\var \big(\sum_{j_1 \leq j \leq j_2}\frac{1}{2}\Delta  Z_{\gamma, n, j}\Big) \geq  c(\Gamma^{-1}\lambda_{\gamma, n - 2m_\Gamma})^2(j_2 - j_1 + 1)\beta\gamma^2
\end{eqnarray*}
{for some absolute constant $c > 0$.} Recalling that $\E |Z| = \sqrt{\tfrac{2}{\pi}}$ for a standard Gaussian $Z$, we then get
\begin{equation}
\label{eq:induct_strat2_8}
\E \bigg |\sum_{j_1 \leq j \leq j_2}\frac{1}{2}\Delta  Z_{\gamma, n, j}\bigg| \geq 2 C (\beta^{-1} + \gamma\sqrt{\beta})\Gamma^{-1}\lambda_{\gamma, n - 2m_\Gamma}\,,
\end{equation}
whenever 
\begin{equation}
\label{eq:added_later9}
j_2 - j_1 + 1 \geq N'_\gamma \mbox{ where we denote } N'_\gamma = \frac{4C^2(\beta^{-1} + \gamma\sqrt{\beta})^2}{\frac{2}{\pi}c\beta\gamma^2}\,.
\end{equation}
Let $N_\gamma = \min_{k \geq 1} \{2^k: 2^k \geq 
N_\gamma'\}$. We are now ready to define our strategy. For $j\in [\Gamma/\beta]$, set
\begin{equation*}
\label{eq:induct_strat2_10}
i_j = \begin{cases}
2 &\mbox{ if } \sum_{(k_j - 1)N_\gamma + 1 \leq j' \leq k_jN_\gamma}\Delta  Z_{\gamma, n, j'} > 0\,, \\
1 &\mbox{ otherwise} \,,
\end{cases}
\end{equation*}
where $k_j \in \N$ is such that $(k_j - 1)N_\gamma + 1 \leq j \leq 
k_jN_\gamma$. {This is a valid strategy since 
$$\beta N_\gamma \leq 2\beta N_\gamma'  = O(\gamma^{-2/3}) < \Gamma\,$$(recall that $\beta = \Theta(\gamma^{-2/3})$ and thus $N_\gamma' = O(1)$). Also notice that $N_{\switch} = \Gamma(N_\gamma\beta)^{-1} - 1$ for this strategy.} It then follows from \eqref{eq:induct_strat2_6} and \eqref{eq:induct_strat2_8}  that
\begin{eqnarray}
\label{eq:induct_strat2_11}
E_{\gamma, n} &=& - \sum_{k \in [\Gamma(N_\gamma\beta)^{-1}]}\E\big\lvert \sum_{(k - 1)N_\gamma + 1 \leq j \leq kN_\gamma}\Delta  Z_{\gamma, n, j}\big \rvert + C (\beta^{-1} + \gamma\sqrt{\beta})\Gamma^{-1}\lambda_{\gamma, n - 2m_\Gamma}N_{\switch}\nonumber \\
&\leq& - C (\beta^{-1} + \gamma\sqrt{\beta})\Gamma^{-1}\lambda_{\gamma, n - 2m_\Gamma}\Gamma(N_\gamma\beta)^{-1} 
= -\Omega(\gamma^{4/3})\lambda_{\gamma, n - 2m_\Gamma}\,.
\end{eqnarray}
Notice that this strategy ensures $i_1 = i_2$, i.e., there is no switch at location~1 (since $N_\gamma \geq 2$) which implies we get a ``legitimate'' crossing (see the discussions at the end of Section~\ref{subsec:strat2}).
\subsection{{An upper bound on  $\lambda_{\gamma, n}$} via recursion}
\label{subsec:upper_bound_strat2}
The goal in this subsection is the following upper bound on $\lambda_{\gamma, n}$ for $n\geq 0$ (recall $\lambda_{\gamma, m} = \Gamma$ for $m \leq 0$).
\begin{lemma}\label{lem-upper-bound-lambda}
For all $n \geq 0$ we have (below $\lfloor n / 2m_\Gamma\rfloor$ is the largest integer $\leq n / 2m_\Gamma$)
\begin{equation}
\label{eq:induct}
\lambda_{\gamma, n} \leq \Gamma (1 - 0.5c\gamma^{4/3})^{\lfloor n / 2m_\Gamma\rfloor}\,.
\end{equation}
\end{lemma}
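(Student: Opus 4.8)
The plan is to establish \eqref{eq:induct} by induction on $n$, with the inductive step obtained by bounding $\lambda_{\gamma, n}$ in terms of $\lambda_{\gamma, n-2m_\Gamma}$ via the strategy constructed in Section~\ref{subsec:construct_strat2}. The base cases $0 \le n < 2m_\Gamma$ hold trivially since $\lambda_{\gamma, n} \le \Gamma$ (the crossing $\cross_n$ for these scales is built from straight-line segments and the fields $\eta_{2^{-\ell}}^{\Gamma^{-2}}$ contribute a bounded multiplicative factor; alternatively one uses the convention $\lambda_{\gamma, m} = \Gamma$ for $m \le 0$ together with the coarse bound that $\cross_n$ has total length $O(\Gamma)$ and $\E\int\e^{\gamma\eta}$ over a unit-length segment is $O(1)$ for small $\gamma$, which can be absorbed since $\lfloor n/2m_\Gamma\rfloor = 0$). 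So it suffices to prove the recursive bound
$$\lambda_{\gamma, n} \le (1 - 0.5c\gamma^{4/3})\,\lambda_{\gamma, n - 2m_\Gamma} \quad \text{for } n \ge 2m_\Gamma\,,$$
and then iterate.

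For the recursive step I would proceed as follows. First, decompose $\Lambda_{\gamma, n} = \Lambda_{\gamma, n, \main} + \Lambda_{\gamma, n, \gadg}$ as in Section~\ref{subsec:construct_strat2}. For $\Lambda_{\gamma, n, \main}$, one has $\Lambda_{\gamma, n, \main} = \sum_{j \in [\Gamma/\beta]} \Lambda_{\gamma, n, \main; j}$, and conditioning on $\eta_{0.5}$ one replaces each $\E(\Lambda_{\gamma,n,\main;j}\mid\eta_{0.5})$ by the approximations $\Approx_{n,j,2}$ or $\Approx_{n,j,2}'$ from \eqref{eq-approximation-to-cite-1} and \eqref{eq-approximation-to-cite-2}; here I must control the three successive approximation errors (replacing $b_n(z)$ by $1$, replacing $\e^{\gamma\eta_{0.5}(z)}$ by $1 + \gamma\eta_{0.5}(z)$, and replacing $l_\gamma\Gamma^{-1}$ by $\beta + O(\beta^{-1})$), each of which contributes a factor $(1 + O(\gamma^2))$ or an additive term of order $\gamma^2 \Gamma^{-2}\lambda_{\gamma,n-2m_\Gamma}$ per block — these are the places where the smallness of $\gamma$ and the estimate $\var(\eta_{0.5}) = \log 2 = O(1)$ enter. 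Summing over $j$ and taking expectations, using the definition of the strategy and the key gain estimate \eqref{eq:induct_strat2_11}, $E_{\gamma,n} = -\Omega(\gamma^{4/3})\lambda_{\gamma, n - 2m_\Gamma}$, together with the count of $\Gamma/(N_\gamma\beta)$ bridges, one obtains
$$\E\Lambda_{\gamma, n, \main} \le (1 + O(\gamma^2))\,\lambda_{\gamma, n - 2m_\Gamma} - \Omega(\gamma^{4/3})\,\lambda_{\gamma, n - 2m_\Gamma}\,.$$
Next, for $\Lambda_{\gamma, n, \gadg}$: each tying gadget consists of $O(1)$ crossings of rectangles that are copies of $V_k^\Gamma$ with $k \ge m_\Gamma$ strictly larger than the block scale, so by the scaling/isometry properties each such crossing has expected weight $O(2^{-m_\Gamma}\lambda_{\gamma, n - 2m_\Gamma - (\text{extra})}) = O(\Gamma^{-1}\lambda_{\gamma, n-2m_\Gamma})$ (using monotonicity of $\lambda$ in a form that needs to be checked, or a direct crude bound), and there are $O(\Gamma/\beta) = O(\gamma^{2/3}\Gamma)$ such gadgets, giving $\E\Lambda_{\gamma, n, \gadg} = O(\gamma^{2/3})\lambda_{\gamma, n-2m_\Gamma}$ — wait, this is too large, so in fact I expect the gadget rectangles to be taken at a finer scale so that the per-gadget weight is $O(\Gamma^{-2}\lambda_{\gamma,n-2m_\Gamma})$ and the total is $O(\Gamma^{-1}\lambda_{\gamma,n-2m_\Gamma}) = O(\gamma^2\lambda_{\gamma,n-2m_\Gamma})$, negligible against the $\gamma^{4/3}$ gain. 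Combining, $\lambda_{\gamma,n} = \E\Lambda_{\gamma,n} \le \lambda_{\gamma, n-2m_\Gamma}(1 + O(\gamma^2) - \Omega(\gamma^{4/3}))$, and since $\gamma$ is small the $O(\gamma^2)$ is dominated, yielding $\lambda_{\gamma,n} \le \lambda_{\gamma,n-2m_\Gamma}(1 - 0.5c\gamma^{4/3})$ for a suitable $c$. Iterating $\lfloor n/2m_\Gamma\rfloor$ times and using the base case gives \eqref{eq:induct}.

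\textbf{The main obstacle} I anticipate is the careful bookkeeping of the approximation errors in Step~1 of Section~\ref{subsec:construct_strat2} — specifically, showing rigorously that each of the three replacement steps (dropping $b_n$, linearizing $\e^{\gamma\eta_{0.5}}$, and the geometric length estimate) costs only a multiplicative $(1 + O(\gamma^2))$ or a genuinely lower-order additive term, uniformly over all $\Gamma/\beta$ bridges and all blocks, so that the accumulated error across one recursion level stays $O(\gamma^2)\lambda_{\gamma,n-2m_\Gamma}$ and is safely beaten by the $\Omega(\gamma^{4/3})$ gain. This requires controlling $\E|\eta_{0.5}(z)|^2 = O(1)$, the boundedness of $a_{R,n,P}$ and $b_n$, and Fubini-type interchanges, plus ensuring the error bound does not secretly depend on $n$ in a way that breaks the iteration; handling the gadget contribution at the right scale so it is also $O(\gamma^2)\lambda_{\gamma,n-2m_\Gamma}$ is a smaller but related point. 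Everything else — the variance computations (Lemma~\ref{lem:convex_variance}), the min-of-two-Gaussians gain, the choice of $\beta = \Theta(\gamma^{-2/3})$ and $N_\gamma = O(1)$ — is already in place from the preceding subsections.
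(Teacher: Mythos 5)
Your proposal follows the paper's proof: the paper first isolates the recursion (Lemma~\ref{lem-lambda-recursion}), namely $\lambda_{\gamma, n} \leq \lambda_{\gamma, n - 2m_\Gamma}(1 - c\gamma^{4/3}) + O(1)(\Gamma^{-1}\lambda_{\gamma, n - 3m_\Gamma} + \Gamma^{-2}\lambda_{\gamma, n - 4m_\Gamma + 1})$, exactly by the error bookkeeping you describe, and then iterates it by induction. Two small corrections to your accounting: the monotonicity of $\lambda_{\gamma,\cdot}$ that you flag as ``needing to be checked'' is never needed --- the paper keeps the gadget terms at their natural scales $n-3m_\Gamma$ and $n-4m_\Gamma+1$ and applies the \emph{strong} induction hypothesis (for all $m<n$) to those terms, which, since $\Gamma^{-1}\le\gamma^2$, yields a contribution $O(\gamma^2)\Gamma(1-0.5c\gamma^{4/3})^{\lfloor n/2m_\Gamma-1.5\rfloor}$ that is absorbed by the $\Omega(\gamma^{4/3})$ gain; and the number of tying gadgets is $O(|\block_\gamma|)=O(\Gamma^2)$ (one per pair of adjacent blocks), not $O(\Gamma/\beta)$, with each gadget crossing built at scales $n-3m_\Gamma$ and $n-4m_\Gamma+1$ so that its expected weight is $O(\Gamma^{-3}\lambda_{\gamma,n-3m_\Gamma}+\Gamma^{-4}\lambda_{\gamma,n-4m_\Gamma+1})$, giving the same total order you arrive at.
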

The key ingredient for the upper bound on $\lambda_{\gamma, n}$ is a recursive relation for $\lambda_{\gamma, n}$, as in the next lemma.
\begin{lemma}\label{lem-lambda-recursion}
The following recursive inequality holds for an absolute constant $c > 0$ and all $n\geq 1$:
\begin{equation}
\label{eq:cost_bound_strat2}
\lambda_{\gamma, n} \leq \lambda_{\gamma, n - 2m_\Gamma}(1 - c\gamma^{4/3}) + O(1)(\Gamma^{-1}\lambda_{\gamma, n - 3m_\Gamma} + \Gamma^{-2}\lambda_{\gamma, n - 4m_\Gamma + 1})\,.
\end{equation}
\end{lemma}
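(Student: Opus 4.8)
The plan is to bound $\lambda_{\gamma,n} = \E\Lambda_{\gamma,n}$ by splitting $\Lambda_{\gamma,n} = \Lambda_{\gamma,n,\main} + \Lambda_{\gamma,n,\gadg}$ as in Section~\ref{subsec:construct_strat2}, and to control each piece separately. The dominant term is $\E\Lambda_{\gamma,n,\main}$, for which I would first replace $\E(\Lambda_{\gamma,n,\main}\mid \eta_{0.5})$ by the approximation $\APPROX_{n,2} = \sum_{j}(\Approx_{n,j,2} + \Approx_{n,j,2}')$ introduced in Step~1, then take expectations and invoke the bound \eqref{eq:induct_strat2_4}, the strategy chosen in Step~3, and the consequence \eqref{eq:induct_strat2_11} which gives $\E E_{\gamma,n} = -\Omega(\gamma^{4/3})\lambda_{\gamma,n-2m_\Gamma}$. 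Combining these, $\E\APPROX_{n,2} \leq \lambda_{\gamma,n-2m_\Gamma}(1 - c\gamma^{4/3})$ for an absolute constant $c>0$; this is where the leading term of \eqref{eq:cost_bound_strat2} comes from. (Here one uses $\beta N_\gamma = O(\gamma^{-2/3}) < \Gamma$ so that the strategy is valid, and the count $N_\switch = \Gamma(N_\gamma\beta)^{-1}-1$.)

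The genuinely new work is to show that the several layers of approximation made in Step~1 — replacing $b_n(z)$ by $1$, replacing $\e^{\gamma\eta_{0.5}(z)}$ by $1+\gamma\eta_{0.5}(z)$, replacing $l_\gamma\Gamma^{-1}$ by $\beta + O(\beta^{-1})$, and (for the gadgets) the crude lengths of the tying rectangles — contribute only lower-order error. For the $b_n$ replacement one writes $b_n(z) = \E\e^{\gamma\eta_{\Gamma^{-2}\vee 2^{-n}}^{0.5}(z)} = \e^{\gamma^2\var(\eta_{\Gamma^{-2}\vee 2^{-n}}^{0.5}(z))/2} = 1 + O(\gamma^2\log\Gamma) = 1 + O(\gamma^2 m_\Gamma)$ using \eqref{eq:variance}; since $\Gamma \leq 2\gamma^{-2}$ this is $1 + O(\gamma^2\log\gamma^{-1})$, far smaller than the $\Omega(\gamma^{4/3})$ gain, so multiplying the main weight $\Theta(\lambda_{\gamma,n-2m_\Gamma})$ by this factor is absorbed into the constant $c$. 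For the $\e^x \approx 1+x$ replacement, $\e^x - 1 - x = O(x^2)\e^{|x|}$, and after integrating against $a_{R,n,P}$ and using that $a_{R,n,P}(z) = \E(J(\cross_{R,n},P)\e^{\gamma\eta^{\Gamma^{-2}\vee 2^{-n}}_{2^{-n}}(z)})$ has total mass $\Gamma^{-2}\lambda_{\gamma,n-2m_\Gamma}$ per block (by \eqref{eq:cost_compute_strat1_1}) while $\var(\eta_{0.5}(z)) = \log 2 = O(1)$, the resulting error over all $\Theta(\Gamma)$ blocks is $O(\gamma^2)\lambda_{\gamma,n-2m_\Gamma}$, again negligible. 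The $l_\gamma\Gamma^{-1} = \beta + O(\beta^{-1})$ estimate is exactly Property~(II) of Lemma~\ref{lem-S-1-j}, and the extra $O(\beta^{-1}\Gamma^{-1})\lambda_{\gamma,n-2m_\Gamma}$ per switch over $N_\switch \leq \Gamma(N_\gamma\beta)^{-1}$ switches is $O(\beta^{-1})N_\gamma^{-1}\lambda_{\gamma,n-2m_\Gamma}$, which (as in \eqref{eq:toy_prob4}) is dominated by half the $\Omega(\gamma^{4/3})$ gain by the very choice of $N_\gamma$ in \eqref{eq:added_later9}. I would collect all these into a single lemma-internal claim that $|\E\Lambda_{\gamma,n,\main} - \E\APPROX_{n,2}| = o(\gamma^{4/3})\lambda_{\gamma,n-2m_\Gamma}$, which is the step I expect to be the most delicate bookkeeping.

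Finally, the gadget term $\Lambda_{\gamma,n,\gadg}$ gives the lower-order summands $O(1)(\Gamma^{-1}\lambda_{\gamma,n-3m_\Gamma} + \Gamma^{-2}\lambda_{\gamma,n-4m_\Gamma+1})$. Each tying operation (Step~2 of Section~\ref{subsec:strat2}) uses the crossings $\cross_{R_{1,2;1},n}, \cross_{R_{1,2;2},n}, \cross_{R_{1,2;3},n}$ through rectangles of aspect ratio $\Gamma{:}1$ and length $2^{-k-m_\Gamma}$ for $k$ in $\{2m_\Gamma, \dots\}$; those at the coarsest relevant scale $k = 2m_\Gamma$ are copies of $\Gamma^{-3}V^\Gamma$, contributing $\E\int_{\cross}\e^{\gamma\eta_{2^{-n}}(z)}|dz| = \Gamma^{-3}\lambda_{\gamma,n-3m_\Gamma}$ each by scaling and isometric invariance (together with the $\e^{\gamma^2 m_\Gamma/2}$-type factors for the intermediate fields, absorbed into $O(1)$), and there are $\Theta(\Gamma^2)$ junctions (one per pair of adjacent blocks, over $\Theta(\Gamma/\beta)$ locations each with $\Theta(\beta\Gamma)$ blocks), giving $O(1)\Gamma^{-1}\lambda_{\gamma,n-3m_\Gamma}$; the deeper tying rectangles at scale $k = 3m_\Gamma$ used when tying those smaller crossings contribute the $\Gamma^{-2}\lambda_{\gamma,n-4m_\Gamma+1}$ term, the shift by $1$ coming from the width $2^{-k-2m_\Gamma+1}$ of $R_{1,2;3}$. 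Adding $\E\Lambda_{\gamma,n,\main} \leq \lambda_{\gamma,n-2m_\Gamma}(1-c\gamma^{4/3})$ and $\E\Lambda_{\gamma,n,\gadg} = O(1)(\Gamma^{-1}\lambda_{\gamma,n-3m_\Gamma} + \Gamma^{-2}\lambda_{\gamma,n-4m_\Gamma+1})$ yields \eqref{eq:cost_bound_strat2}. The main obstacle, as noted, is the error analysis for the approximations in Step~1 — one must verify that every discarded term is smaller than $\gamma^{4/3}\lambda_{\gamma,n-2m_\Gamma}$ by an absolute-constant factor, uniformly in $n$, which is why the smallness of $\gamma$ and the precise choices $\beta = \Theta(\gamma^{-2/3})$, $\Gamma = \Theta(\gamma^{-2})$, $N_\gamma = \Theta(1)$ are all used.
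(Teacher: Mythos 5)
Your proposal is correct and follows essentially the same route as the paper's proof: decompose into $\Lambda_{\gamma,n,\main}+\Lambda_{\gamma,n,\gadg}$, pass from $\E\Lambda_{\gamma,n,\main}$ to $\E(\mathrm{Approx}_{n,1})$ via the factor $\E\e^{\gamma\eta^{0.5}_{\Gamma^{-2}\vee 2^{-n}}(\bm 0)}=1+O(\gamma^2\log\gamma^{-1})$, bound $\E(\mathrm{Approx}_{n,1}-\mathrm{Approx}_{n,2})=O(\gamma^2)\lambda_{\gamma,n-2m_\Gamma}$, invoke \eqref{eq:induct_strat2_4} and \eqref{eq:induct_strat2_11} for the $\Omega(\gamma^{4/3})$ gain, and account for the $O(\Gamma^2)$ tyings at scales $n-3m_\Gamma$ and $n-4m_\Gamma+1$. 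The only quibbles are cosmetic: the block count in your $\e^x\approx 1+x$ error estimate should be $\Theta(\Gamma^2)$ rather than $\Theta(\Gamma)$ (you use the correct count $|\block_\gamma|=O(\Gamma^2)$ elsewhere), and the $\Gamma^{-2}\lambda_{\gamma,n-4m_\Gamma+1}$ term arises from the rectangle $R_{1,2;3}$ within the \emph{same} tying operation at each junction (a copy of $V_{4m_\Gamma-1}^{\Gamma}$), not from a deeper recursive layer of tyings.
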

\begin{proof}
 In order to derive the recursion, we first estimate the expected errors that we made at every stage of our approximation in
Section~\ref{subsec:construct_strat2}. {Denote  $\APPROX_{n,1} = \sum_{j \in [\Gamma / \beta]}(\Approx_{n, j, 1} + \Approx_{n, j, 1}')$ (see \eqref{eq-def-approx-1} and \eqref{eq-def-approx-prime} for definitions of $\Approx_{n, j, 1}$ and $\Approx_{n, j, 1}'$). 
From \eqref{eq:variance} and the translation invariance of $\eta$ (see also \eqref{eq:conditional_expect} and \eqref{eq-def-approx-1}) we get for $n\geq 1$
\begin{equation}
\label{eq:cost_compute1}
\E \Lambda_{\gamma, n, \main} = \E \e^{\gamma \eta_{\Gamma^{-2} \vee 2^{-n}}^{0.5}(\bm 0)}\E (\APPROX_{n, 1}) 
 = (1 + O(\gamma^2 \log \gamma^{-1}))\E (\APPROX_{n, 1})\,.
\end{equation}
Next we control the difference between $\APPROX_{n,1}$ and $\APPROX_{n,2} = \sum_{j \in [\Gamma / \beta]}(\Approx_{n, j, 2} 
+ \Approx_{n, j, 2}')$ (see \eqref{eq-def-approx-2} and \eqref{eq-def-approx-prime} for their definitions).} Since $\e^x \geq 1 + x$, it follows that $\APPROX_{n, 1} \geq \APPROX_{n, 2}$. {In addition, applying Fubini to the expressions of $\Approx_{n, j, 1} - \Approx_{n, j, 2}$ and $\Approx_{n, j, 1}' - \Approx_{n, j, 2}'$ and using the translation invariance of $\eta$ we get that
\begin{eqnarray*}
\label{eq:cost_compute2}
\E (\APPROX_{n, 1} - \APPROX_{n, 2}) &\leq& \E(\e^{\gamma \eta_{0.5}(\bm 0)} - 1 - \gamma \eta_{0.5}(\bm 0))\sum_{R \in \block_{\gamma}}\E \int_{\cross_{R, n}} \e^{\gamma \eta_{2^{-n}}^{\Gamma^{-2} \vee 2^{-n}}(z)}|dz| \\
&=& O(\gamma^2)|\block_\gamma|\Gamma^{-2}\lambda_{\gamma, n - 2m_\Gamma}\,.
\end{eqnarray*}
Combined with the fact that
$$|\block_\gamma| = \sum_{i \in [2], j \in [\Gamma/ \beta]}\Big(\frac{\mbox{length of }R_{i, j}}{\Gamma^{-1}} + \frac{\mbox{length of }S_{i, j}}{\Gamma^{-1}}\Big) = O(\Gamma / \beta)O(\beta \Gamma) = O(\Gamma^2),$$
it follows that (recall $\lambda_{\gamma, m} = \Gamma$ for $m \leq 0$)
\begin{equation*}
\label{eq:cost_compute3}
\E (\APPROX_{n, 1} - \APPROX_{n, 2}) = O(\gamma^2)\lambda_{\gamma, n - 2m_\Gamma}\,.
\end{equation*}}Since $\E (\APPROX_{n, 2}) \leq \lambda_{\gamma, n - 2m_\Gamma} + E_{\gamma, n}$ (see \eqref{eq:induct_strat2_4}, \eqref{eq:induct_strat2_6}), we combine the preceding display with \eqref{eq:cost_compute1} and \eqref{eq:induct_strat2_11}  and get that 
\begin{equation}
\label{eq:cost_compute8}
\E \Lambda_{\gamma, n, \main} \leq \lambda_{\gamma, n - 2m_\Gamma}(1 - \Omega(\gamma^{4/3}))\,.
\end{equation}

Next we bound $\E \Lambda_{\gamma, n, \gadg}$. 
Recall from Section~\ref{subsec:strat2} that we use three additional crossings for tying the pair $(\cross_{R, n}, 
\cross_{R', n})$ for any pair of adjacent blocks $(R, R')$. 
Two of these crossings are constructed using $\mathcal A_{n - 3m_\Gamma}$ and the other one is constructed using 
$\mathcal A_{n - 4m_\Gamma + 1}$. Hence by a similar reasoning as used for \eqref{eq:cost_compute_strat1_1} and the independent increment and the translation invariance of $\eta$, we obtain the following upper bound on the expected total weight of these crossings (computed with respect to $\eta_{2^{-n}}$):
\begin{equation}
\label{eq:cost_compute_strat1_3}
2\E \e^{\gamma \eta_{\Gamma^{-3}\vee 2^{-n}}(\bm 0)}\Gamma^{-3}\lambda_{\gamma, n - 3m_\Gamma} + \E \e^{\gamma \eta_{2\Gamma^{-4}\vee 2^{-n}}(\bm 0)}2\Gamma^{-4}\lambda_{\gamma, n - 4m_\Gamma + 1}\,.
\end{equation}
Since there are at most $|\block_\gamma| = O(\Gamma^2)$ many tyings, this along with \eqref{eq:variance} implies  that
\begin{equation}
\label{eq:cost_compute11}
\E \Lambda_{\gamma, n, \gadg} \leq (2 + O(\gamma^2\log \gamma^{-1}))(\Gamma^{-1}\lambda_{\gamma, n - 3m_\Gamma} + \Gamma^{-2}\lambda_{\gamma, n - 4m_\Gamma + 1})\,.
\end{equation}
Since $\lambda_{\gamma, n} = \E \Lambda_{\gamma, n, \main} + \E \Lambda_{\gamma, n, \gadg}$, by \eqref{eq:cost_compute8}, \eqref{eq:cost_compute11} we conclude the proof of the lemma.
\end{proof}

\begin{proof}[Proof of Lemma~\ref{lem-upper-bound-lambda}]
The proof is based on Lemma~\ref{lem-lambda-recursion} and an induction argument.  Note that \eqref{eq:induct} obviously holds for $n = 0$ since $\lambda_{\gamma, n} = \Gamma$ for $n \leq 0$. 
Now fix $n \in \N$ and assume that \eqref{eq:induct} holds for all $m < n$. Combined with Lemma~\ref{lem-lambda-recursion} and the fact $\Gamma \geq \gamma^{-2}$, this implies
\begin{eqnarray*}
\label{eq:upper_bound}
\lambda_{\gamma, n} &\leq& \Gamma (1 - 0.5c\gamma^{4/3})^{\lfloor n / 2m_\Gamma \rfloor - 1}(1 - c\gamma^{4/3}) + O(\gamma^2)\Gamma(1 - 0.5c\gamma^{4/3})^{\lfloor n / 2m_\Gamma - 1.5\rfloor} \\
&=& \Gamma (1 - 0.5c\gamma^{4/3})^{\lfloor n / 2m_\Gamma \rfloor}(1 - 0.5c\gamma^{4/3} + O(\gamma^2)) \leq \Gamma (1 - 0.5c\gamma^{4/3})^{\lfloor n / 2m_\Gamma \rfloor}\,,
\end{eqnarray*}
thus completing the induction step. Therefore, \eqref{eq:induct} holds for all $n\geq 1$.
\end{proof}

\subsection{Proof of Theorem~\ref{thm:main}}
\label{sec:main_thm}
We are now ready to give the proof for Theorem~\ref{thm:main}. By Lemma~\ref{lem-identity-in-law}, we can couple the processes $h_\delta^{\mathcal U}$ and  $\hat h^{\mathcal U}_\delta$ so that they are identical. Let us define, for $v, w \in V$, 
$$D_{\eta, \gamma, \delta}(v, w) = \inf_{P} \int_{P}\e^{\gamma \eta_\delta(z)}|dz|\,$$ 
where $P$ ranges over all piecewise smooth paths in $V$ connecting $v$ and $w$. Also denote by $\Lambda_{h^\mathcal U, \gamma, \delta}^\mathrm{straight}(v, w)$ the weight of the 
straight line joining $v$ and $w$ when the underlying process is $h_\delta^{\mathcal U}$. Then, we have
\begin{equation}
\label{eq:field_change}
D_{\gamma, \delta}^{\mathcal U}(v, w) \leq \e^{\gamma(C_{\mathcal U, \ep} + 1)(\log \delta^{-1})^{2/3}}D_{\eta, \gamma, \delta}(v, w)\mathbf 1_{E_{\mathrm{couple}}} + \Lambda_{h^\mathcal U, \gamma, \delta}^\mathrm{straight}(v, w)\mathbf 1_{E_{\mathrm{couple}}^c}\,,
\end{equation}
where $C_{\mathcal U, \epsilon}$ is the constant from Proposition~\ref{prop:coupling} and $E_{\mathrm{couple}}$ is defined by
$$E_{\mathrm{couple}} = \{\max_{u \in V}(h_\delta^{\mathcal U}(u) - \eta_\delta(u)) \leq (C_{\mathcal U, \ep} + 1)(\log \delta^{-1})^{2/3}\}\,.$$
Thus, we can conclude the proof of Theorem~\ref{thm:main}, provided with Lemmas~\ref{lem-bound-straight-Lambda} and \ref{lem-bound-eta-distance} below.
\begin{lemma}\label{lem-bound-straight-Lambda}
Assume that $\delta<\ep/4$. Then we have $\E \Lambda_{h^\mathcal U, \gamma, \delta}^\mathrm{straight}(v, w)\mathbf 1_{E_{\mathrm{couple}}^c} =  O_{\mathcal U, \epsilon}(1)\e^{-\Omega_{\mathcal U, \epsilon}(\log \delta^{-1})}$.
\end{lemma}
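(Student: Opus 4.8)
The plan is to bound the expected weight of the straight segment joining $v$ and $w$ under the original field $h_\delta^{\mathcal U}$, using the crude fact that this field is a Gaussian process with uniformly bounded variance on $V$ (of order $\log\delta^{-1}$), and to pay for the intersection with the rare event $E_{\mathrm{couple}}^c$ via Cauchy--Schwarz together with the tail bound from Proposition~\ref{prop:coupling}.

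First I would note that the straight line $L_{v,w}$ from $v$ to $w$ has Euclidean length $|v-w| \leq \diam(V) = O(1)$, so that
$$
\Lambda_{h^\mathcal U, \gamma, \delta}^\mathrm{straight}(v, w) = \int_{L_{v,w}} \e^{\gamma h_\delta^{\mathcal U}(z)}|dz| \leq \int_{L_{v,w}} \e^{\gamma h_\delta^{\mathcal U}(z)}|dz|\,,
$$
and by Fubini,
$$
\E\big(\Lambda_{h^\mathcal U, \gamma, \delta}^\mathrm{straight}(v, w)\mathbf 1_{E_{\mathrm{couple}}^c}\big) \leq \int_{L_{v,w}} \E\big(\e^{\gamma h_\delta^{\mathcal U}(z)}\mathbf 1_{E_{\mathrm{couple}}^c}\big)|dz|\,.
$$
For each fixed $z \in L_{v,w} \subseteq V \subseteq \mathcal U_\epsilon$, apply Cauchy--Schwarz:
$$
\E\big(\e^{\gamma h_\delta^{\mathcal U}(z)}\mathbf 1_{E_{\mathrm{couple}}^c}\big) \leq \big(\E \e^{2\gamma h_\delta^{\mathcal U}(z)}\big)^{1/2}\,\P(E_{\mathrm{couple}}^c)^{1/2}\,.
$$
The first factor is $\e^{2\gamma^2 \var(h_\delta^{\mathcal U}(z))}$; since $\var(h_\delta^{\mathcal U}(z)) = \log\delta^{-1} + O_{\mathcal U,\epsilon}(1)$ (this follows from \eqref{eq:GFF_cov}, or alternatively from \eqref{eq:variance} together with the variance bound $\var(\Delta_\delta(v)) = O_{\mathcal U,\epsilon}(1)$ established in the proof of Proposition~\ref{prop:coupling}), this factor is at most $\e^{O_{\mathcal U,\epsilon}(1)}\delta^{-2\gamma^2}$. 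For the second factor, by Proposition~\ref{prop:coupling} with $x = (\log\delta^{-1})^{2/3} - C_{\mathcal U,\ep}\sqrt{\log\delta^{-1}}$ (which is $\Omega((\log\delta^{-1})^{2/3})$ for $\delta$ small, since $2/3 > 1/2$), we get
$$
\P(E_{\mathrm{couple}}^c) = \P\big(\max_{u\in V}(\hat h^{\mathcal U}_\delta(u) - \eta_\delta(u)) > (C_{\mathcal U,\ep}+1)(\log\delta^{-1})^{2/3}\big) = \e^{-\Omega_{\mathcal U,\epsilon}((\log\delta^{-1})^{4/3})}\,,
$$
using the coupling from Lemma~\ref{lem-identity-in-law} to replace $h_\delta^{\mathcal U}$ by $\hat h^{\mathcal U}_\delta$. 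Multiplying the two factors and integrating over $L_{v,w}$ (whose length is $O(1)$), the superpolynomially small term $\e^{-\Omega((\log\delta^{-1})^{4/3})}$ dominates the polynomial blow-up $\delta^{-2\gamma^2} = \e^{2\gamma^2\log\delta^{-1}}$, so the product is $O_{\mathcal U,\epsilon}(1)\e^{-\Omega_{\mathcal U,\epsilon}(\log\delta^{-1})}$, as claimed. (Here it is important that $\gamma$ is a fixed constant, so $2\gamma^2\log\delta^{-1}$ is only linear in $\log\delta^{-1}$ while the exponent from the tail is of order $(\log\delta^{-1})^{4/3}$; in fact any fixed power of $\log\delta^{-1}$ strictly below $4/3$ could be absorbed.)

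I do not anticipate a genuine obstacle here; the only point requiring a little care is making sure the Gaussian-tail exponent $(\log\delta^{-1})^{4/3}$ from Proposition~\ref{prop:coupling} genuinely beats the exponential moment $\e^{2\gamma^2\log\delta^{-1}}$ — which it does comfortably for all small fixed $\gamma$ and small $\delta$ — and in correctly bounding $\var(h_\delta^{\mathcal U}(z))$ uniformly over $z \in V$, which reduces to the already-established bounds on $\var(\eta_\delta(z))$ and $\var(\Delta_\delta(z))$.
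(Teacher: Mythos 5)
Your proposal is correct and takes essentially the same route as the paper's proof: Cauchy--Schwarz to decouple the indicator of $E_{\mathrm{couple}}^c$ from the weight, a Gaussian exponential-moment bound yielding the harmless polynomial factor $\delta^{-O(\gamma^2)}$, and the superpolynomial tail $\e^{-\Omega_{\mathcal U,\epsilon}((\log \delta^{-1})^{4/3})}$ obtained from Proposition~\ref{prop:coupling}. The only cosmetic difference is that you apply Cauchy--Schwarz pointwise after Fubini, whereas the paper applies it directly to the full weight $\Lambda_{h^\mathcal U, \gamma, \delta}^\mathrm{straight}(v,w)$ and then bounds its second moment by Fubini.
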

\begin{proof}
To this end, we can use Proposition~\ref{prop:coupling} 
and the Cauchy-Schwarz inequality to obtain
\begin{equation}
\label{eq:display_cs1}
\E \Lambda_{h^\mathcal U, \gamma, \delta}^\mathrm{straight}(v, w)\mathbf 1_{E_{\mathrm{couple}}^c} \leq \P(E_{\mathrm{couple}}^c)\sqrt{\E \big(\Lambda_{h^\mathcal U, \gamma, \delta}^\mathrm{straight}(v, w)\big)^2} \leq \e^{-\Omega_{\mathcal U, \epsilon}(\log \delta^{-4/3})}\sqrt{\E \big(\Lambda_{h^\mathcal U, \gamma, \delta}^\mathrm{straight}(v, w)\big)^2}\,.
\end{equation}
Since $\var(h_\delta^{\mathcal U}(v) - \eta_\delta(v)) = O_{\mathcal U, \epsilon}(1)$ by \eqref{eq:coupling1} and $\var(\eta_\delta(v)) = O(\log \delta^{-1})$ by \eqref{eq:variance}, we get from Fubini
\begin{equation}
\label{eq:display_cs2}
\E \big(\Lambda_{h^\mathcal U, \gamma, \delta}^\mathrm{straight}(v, w)\big)^2 = \int_{[0, 1]^2}\E \e^{\gamma (h_{\delta}^{\mathcal U}(v + |v-w|s) + h_{\delta}^{\mathcal U}(v + |v-w|t))}|v-w|^2dsdt = O_{\mathcal U, \epsilon}(1) \delta^{-O(\gamma^2)}\,.
\end{equation}
Combining \eqref{eq:display_cs1} and \eqref{eq:display_cs2}, we complete the proof of the lemma.
\end{proof}
\begin{lemma}\label{lem-bound-eta-distance}
For all $\delta>0$ we have that $\E D_{\eta, \gamma, \delta}(v, w) = O(\Gamma)\delta^{\Omega(\frac{\gamma^{4/3}}{\log \gamma^{-1}})}$. 
\end{lemma}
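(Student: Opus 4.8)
The plan is to produce, for the given $v,w\in V$, a single random polypath $P^{*}$ joining $v$ and $w$ inside $V$, measurable with respect to $\eta_{2^{-M}}$ for a suitable dyadic scale $2^{-M}\asymp\delta$, and to bound $\E\int_{P^{*}}e^{\gamma\eta_{2^{-M}}}$ using only Lemma~\ref{lem-upper-bound-lambda} and the three invariance properties of $\eta$. (For $\delta$ bounded away from $0$ the claimed bound is trivial from the straight line, so I would assume $\delta$ small.)

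\emph{Reduction to a dyadic scale.} I would first take $2^{-M}$ to be the smallest power of $2$ with $2^{-M}\ge\delta$, so that $\delta\le 2^{-M}<2\delta$, and use the independent-increment property to write $\eta_\delta=\eta_{2^{-M}}+\eta_\delta^{2^{-M}}$ with the two terms independent and $\var\bigl(\eta_\delta^{2^{-M}}(z)\bigr)=\log(2^{-M}/\delta)\le\log 2$ for every $z$. Then, for any polypath $P$ measurable with respect to $\eta_{2^{-M}}$, conditioning on $\eta_{2^{-M}}$ and applying Fubini gives $\E\int_P e^{\gamma\eta_\delta(z)}|dz|=e^{\gamma^2\var/2}\,\E\int_P e^{\gamma\eta_{2^{-M}}(z)}|dz|=O(1)\,\E\int_P e^{\gamma\eta_{2^{-M}}(z)}|dz|$. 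Hence it suffices to build $P^{*}$ as above with $\E\int_{P^{*}}e^{\gamma\eta_{2^{-M}}}=O(\Gamma)(2^{-M})^{\Omega(\gamma^{4/3}/\log\gamma^{-1})}$: then $D_{\eta,\gamma,\delta}(v,w)\le\int_{P^{*}}e^{\gamma\eta_\delta}$, and $2^{-M}\le 2\delta$ gives the lemma.

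\emph{A multiscale funnel at each endpoint.} To reach $v$ cheaply I would fix a nested chain $R^{(0)}_v\supseteq R^{(1)}_v\supseteq\cdots\supseteq R^{(J_v)}_v\ni v$ of rectangles inside $V$, each of aspect ratio $\Gamma{:}1$, with long directions alternating between horizontal and vertical: $R^{(0)}_v$ is a $\Gamma^{-1}$-wide strip spanning $V$ that contains $v$, and $R^{(j+1)}_v$ is an aspect-$\Gamma{:}1$ sub-rectangle of $R^{(j)}_v$ having full extent in the short direction of $R^{(j)}_v$ (hence thin, of width $\Gamma^{-1}$ times the short side of $R^{(j)}_v$, in its long direction) and containing $v$. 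Thus the short side of $R^{(j)}_v$ is $s_j:=\Gamma^{-(j+1)}=2^{-(j+1)m_\Gamma}$, and I would stop at $J_v$ with $s_{J_v}\asymp 2^{-M}$ (arranging the last step so that $s_{J_v}=2^{-M}$). Using the scaling and isometry invariance of $\eta$ together with $\eta_{2^{-M}}=\eta_{2^{-M}}^{s_j}+\eta_{s_j}$, I would take $\cross_{R^{(j)}_v}$ to be a rescaled, relocated copy of $\cross_{M-(j+1)m_\Gamma}$, measurable with respect to $\eta_{2^{-M}}^{s_j}$; peeling off the independent factor $\eta_{s_j}$ (of variance $(j+1)m_\Gamma\log 2$) exactly as in the reduction step yields
\[
\E\int_{\cross_{R^{(j)}_v}}e^{\gamma\eta_{2^{-M}}(z)}|dz|\ \le\ s_j^{1-\gamma^2/2}\,\lambda_{\gamma,\,M-(j+1)m_\Gamma}\ =\ \Gamma^{-(j+1)(1-\gamma^2/2)}\,\lambda_{\gamma,\,M-(j+1)m_\Gamma}.
\]
Since $\cross_{R^{(j)}_v}$ joins the two shorter boundaries of $R^{(j)}_v$ and $R^{(j+1)}_v$ spans $R^{(j)}_v$ in its short direction, the portion of $\cross_{R^{(j)}_v}$ lying in $R^{(j+1)}_v$ joins the two longer boundaries of $R^{(j+1)}_v$, whereas $\cross_{R^{(j+1)}_v}$ joins its two shorter boundaries; by the elementary planar fact that a connected set joining the two longer sides of a rectangle must meet a connected set joining the two shorter sides (the same fact underlying the tying gadgets), $\bigcup_{j}\cross_{R^{(j)}_v}$ is a connected polypath, and adjoining the straight segment from $v$ to a nearest point of $\cross_{R^{(J_v)}_v}$ (it stays in the convex rectangle $R^{(J_v)}_v\subseteq V$, has length $\le\operatorname{diam}R^{(J_v)}_v=O(\Gamma 2^{-M})$, hence expected weight $O(\Gamma)(2^{-M})^{1-\gamma^2/2}$) gives a polypath $\mathrm{conn}_v$ from $v$ to $\cross_{R^{(0)}_v}$. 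I would build $\mathrm{conn}_w$ symmetrically.

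\emph{Gluing and summation.} I would then let $\hat\xi$ be a crossing of an arbitrary $\Gamma^{-1}$-wide strip of $V$ whose orientation is perpendicular to that of both $\cross_{R^{(0)}_v}$ and $\cross_{R^{(0)}_w}$; by the same intersection fact $\hat\xi$ meets both, so $P^{*}:=\mathrm{conn}_v\cup\hat\xi\cup\mathrm{conn}_w$ is a connected, $\eta_{2^{-M}}$-measurable polypath joining $v$ and $w$ in $V$. Taking expectations and inserting Lemma~\ref{lem-upper-bound-lambda} (which gives $\lambda_{\gamma,M-(j+1)m_\Gamma}\le O(\Gamma)(1-\tfrac12 c\gamma^{4/3})^{M/2m_\Gamma}(1-\tfrac12 c\gamma^{4/3})^{-(j+1)/2}$ and $\E\int_{\hat\xi}e^{\gamma\eta_{2^{-M}}}=O\bigl((1-\tfrac12 c\gamma^{4/3})^{M/2m_\Gamma}\bigr)$), the funnel contributions form a geometric series in $j$ whose ratio is $\Gamma^{-(1-\gamma^2/2)}(1-\tfrac12 c\gamma^{4/3})^{-1/2}\le 2\gamma<\tfrac12$ — here the choice $\Gamma\ge\gamma^{-2}$ is essential — hence bounded up to a constant by its first term; combined with the $O(\Gamma)(2^{-M})^{1-\gamma^2/2}$ from the two terminal segments, and using $m_\Gamma=\log_2\Gamma\le 1+2\log_2\gamma^{-1}$ to rewrite $(1-\tfrac12 c\gamma^{4/3})^{\Theta(M/m_\Gamma)}$ as $(2^{-M})^{\Omega(\gamma^{4/3}/\log\gamma^{-1})}$ and $1-\gamma^2/2>\Omega(\gamma^{4/3}/\log\gamma^{-1})$ for small $\gamma$, this gives $\E\int_{P^{*}}e^{\gamma\eta_{2^{-M}}}=O(\Gamma)(2^{-M})^{\Omega(\gamma^{4/3}/\log\gamma^{-1})}$, and the reduction step concludes.

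\emph{Expected main difficulty.} The quantitative core — a multiscale sum controlled by its coarsest term and equal to a small power of $\delta$ — is essentially immediate from Lemma~\ref{lem-upper-bound-lambda}; the real work is the geometry of the funnel: designing the nested rectangles so they reach the arbitrary point $v$ without leaving $V$, verifying the topological intersections, and — crucially — realizing each $\cross_{R^{(j)}_v}$, via the scaling and isometry invariance of $\eta$ with the coarse scales peeled off as $O(1)$ multiplicative factors, as a rescaled $\cross_{\cdot}$ measurable with respect to the right $\eta^{s_j}$, which is exactly what legitimizes the $\eta_{2^{-M}}$-measurability used in the reduction.
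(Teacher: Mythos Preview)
Your proposal is correct and follows essentially the same approach as the paper's proof: a multiscale funnel of nested aspect-ratio-$\Gamma$ rectangles at each endpoint, glued via strip crossings, with expected weights bounded through Lemma~\ref{lem-upper-bound-lambda} and summed as a geometric series dominated by its coarsest term. The differences---indexing the funnel from large to small rather than small to large, using one connecting strip $\hat\xi$ instead of the paper's two ($P_\up$ and $P_\lt$), and making the dyadic reduction $\eta_\delta=\eta_{2^{-M}}+\eta_\delta^{2^{-M}}$ explicit---are cosmetic; the paper likewise handles the non-dyadic last step and the terminal segment at $v$ in an ad hoc manner (its Properties~(a), (f), (g)), so your vagueness there is no worse than the original.
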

\begin{proof}
The proof of the lemma consists of two steps.

\smallskip
\noindent {\bf Step 1: constructing a light path.} Let $n$ be the unique positive integer satisfying $2^{-n - 1} < \delta \leq 2^{-n}$. Choose a rectangle $R$ from either $V_{m_\Gamma}^{\Gamma; (1 - \Gamma^{-1})\iota}$ or $V_{m_\Gamma}^{\Gamma}$ such that $v 
\notin R$ (there is always such a rectangle). For convenience of exposition we assume that $R = V_{m_\Gamma}^{\Gamma; (1 - \Gamma^{-1})\iota}$. Now notice that there exists a sequence of rectangles $R_{1, v}, \ldots R_{K_v, v}$ with sides parallel to the coordinate axes such that:\\
(a) The shorter boundary of $R_{1, v}$ has width  
$2^{-n}$ and has $v$ as one of its endpoints.\\
(b) $R_{K_v, v}$ intersects $\ub V$.\\
(c) The aspect ratio of each $R_{i, v}$ is $\Gamma:1$ for all $i\in [K_v - 1]$.\\
(d) $R_{i, v} \subseteq V$ for all $i \leq K_v - 2$. \\
(e) $R_{i, v} \subseteq R_{i + 1, v}$ for all $i \leq K_v - 2$. Furthermore one of the shorter boundary segments of $R_{i + 1, v}$ is same as one of the longer boundary segments of $R_{i, v}$ for all such $i$.\\
(f) $R_{K_v - 1, v} \cap V$ (also $R_{K_v, v} \cap V$) is a non-degenerate rectangle whose one boundary segment is same as one of the shorter boundary segments of $R_{K_v - 1, v}$ (respectively $R_{K_v, v}$).\\
(g) $R_{K_v - 1, v} \cap V \subseteq R_{K_v, v} \cap V$ and one of the shorter boundary segments of $R_{K_v, v}$ is contained in one of the longer boundary segments of $R_{K_v - 1, v}$.
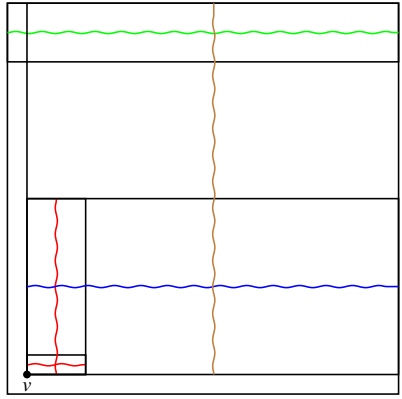
\begin{figure}[!htb]
\centering
\begin{tikzpicture}[semithick, scale = 2.6]
\draw (-1, -1) rectangle (1, 1);

\fill (-0.9, -0.9) circle [radius = 0.02];
\node [scale = 0.7, below] at (-0.9, -0.9) {$v$};


\draw (-1, 0.7) rectangle (1, 1);

\draw [green, style = {decorate, decoration = {snake, amplitude = 0.4}}] (-1, 0.85) -- (1, 0.85);

\draw (-0.9, -0.9) rectangle (-0.6, -0.8);
\draw [red, style = {decorate, decoration = {snake, amplitude = 0.4}}] (-0.9, -0.85) -- (-0.6, -0.85);
\draw (-0.9, -0.9) rectangle (-0.6, 0);
\draw [red, style = {decorate, decoration = {snake, amplitude = 0.4}}] (-0.75, -0.9) -- (-0.75, 0);
\draw (-0.9, -0.9) rectangle (1, 0);
\draw [blue, style = {decorate, decoration = {snake, amplitude = 0.4}}] (-0.9, -0.45) -- (1, -0.45);
\draw (-0.9, 0) -- (-0.9, 1);
\draw [brown, style = {decorate, decoration = {snake, amplitude = 0.4}}] (0.055, -0.9) -- (0.055, 1);
\end{tikzpicture}
\caption{{\bf The rectangles $R_{1, v}, \ldots, R_{K_v, v}$.} In this case $K_v = 4$. The portions of $R_{3, u}$ and $R_{4, v}$ lying outside $V$ have been omitted. The red curved lines indicate $P_{1, v}$ and $P_{2, v}$. The blue and brown curved lines respectively indicate the portions of $P_{3, v}$ and $P_{4, v}$ that lie within $V$. The green curved line indicates $P_\up$.}
\label{fig:pt2bdry}
\end{figure}

Let $P_v$ be the shorter boundary segment of $R_{1, v}$ 
containing $v$. By Properties~(a), (b), (d), (e), (f) and (g), we see that given any choice of a crossing $P_{i, v}$ through $R_{i, v}$ and a crossing $P_\up$ through $V_{m_\Gamma}^{\Gamma; (1 - \Gamma^{-1})\iota}$, the union of $P_v, P_{1, v}, \ldots, P_{K_v, v}$ contains a path between $v$ and $\ub V$ which is contained in $V$ and also intersects $P_\up$ (see Figure~\ref{fig:pt2bdry}). 
Similarly if $w \notin [0, \Gamma^{-1}] \times [0, 1]$ (otherwise we can work with $[1 - \Gamma^{-1}, 1] \times [0, 1]$), we can construct a sequence of rectangles $R_{1, w}, \ldots, R_{K_w, w}$ such that given any choice of a crossing $P_{i, w}$ through $R_{i, w}$ and a crossing $P_\lt$ through $[0, \Gamma^{-1}] \times [0, 1]$, the union of $P_w, P_{1, w}, \ldots, P_{K_w, w}$ contains a path between $w$ and $\lb V$ which is contained in $V$ 
and intersects $P_\lt$. Since $P_\up$ and $P_\lt$ always intersect, it follows that the union of $P_v, P_{1, v}, \ldots, P_{K_v, v}, P_\up, P_\lt, P_w, P_{1, w}, \ldots, 
P_{K_w, w}$ contains a path between $v$ and $w$. Therefore it remains to choose these crossings so that their combined weight is small, as in the next step.

\smallskip

\noindent {\bf Step 2: bounding the weight}. Note that the rectangles $[0, \Gamma^{-1}] \times [0, 1]$ and $V_{m_\Gamma}^{\Gamma; (1 - \Gamma^{-1})\iota}$ have aspect ratio $\Gamma:1$ and width $2^{-m_\Gamma}$. So we can use $\mathcal A_{n - m_\Gamma}$ to construct $P_\lt$ and $P_\up$. Recall from Section~\ref{subsec:strat2} that we only use the processes $\{\eta_{2^{-\ell'}}^{2^{- \ell }}\}_{ \ell  < \ell' \leq n}$ to construct a crossing at scale 
$n$ using $\mathcal A_{n - \ell}$. Thus by Lemma~\ref{lem-upper-bound-lambda}, the independent increment and the scaling property of $\eta$ and \eqref{eq:variance}, we can bound the expected weights of both $P_\up$ and $P_\lt$  from above as follows (recall that we used similar arguments for deriving \eqref{eq:cost_compute_strat1_3}):
\begin{equation}
\label{eq:display_main2**}
 O(\Gamma)2^{-(n - m_\Gamma)\Omega(\frac{\gamma^{4/3}}{\log \gamma^{-1}})}  2^{-m_\Gamma}  2^{O(\gamma^2)m_\Gamma} = O(\Gamma)\delta^{\Omega(\frac{\gamma^{4/3}}{\log \gamma^{-1}})}2^{-\Omega(m_\Gamma)}\,.
\end{equation}
 Now fix $u \in 
\{v, w\}$. Properties~(c) and (e) imply that for all $i \in [K_u-1]$, $R_{i, u}$ is a copy of $V_{\ell_i}^\Gamma$ where $\ell_i \coloneqq n - m_\Gamma(i-1) \in [n]$. Hence we can choose $P_{i, u}$ to be 
the crossing constructed by $\mathcal A_{n - \ell_i}$. Similarly by Lemma~\ref{lem-upper-bound-lambda}, we can bound the expected weight of each $P_{i, u}$ (with respect to $\eta_\delta$) by
\begin{equation}
\label{eq:display_main2*}
\Gamma 2^{-(n - \ell_{i})\Omega(\frac{\gamma^{4/3}}{\log \gamma^{-1}})}2^{-\ell_{i}} 2^{O(\gamma^2) \ell_{i}} =O(\Gamma)\delta^{\Omega(\frac{\gamma^{4/3}}{\log \gamma^{-1}})}2^{-\Omega(\ell_i)}= O(\Gamma)\delta^{\Omega(\frac{\gamma^{4/3}}{\log \gamma^{-1}})}2^{-\Omega(n - m_\Gamma(i-1))}\,.
\end{equation}
 The width of $R_{K_u, u}$, however, may not be an integer power of 2. So suppose, without loss of generality, that $R_{K_u, u} = V_{\ell_u}^{\Gamma; u'}$ 
for some $\ell_u \in [0, n]$ and $u' \in \R^2$. From the construction that we used to derive \eqref{eq:induct} it is clear that it can be adapted to construct a crossing through $V_{\lceil \ell_{u} \rceil}^{2\Gamma; u'}$ that uses only the processes $\{\eta_{2^{-\ell'}}^{2^{- \lceil \ell_{u} \rceil }}\}_{ \lceil \ell_{u} \rceil  < \ell' \leq n}$ as input and has expected weight (computed with respect to $\eta_{2^{-n}}^{2^{-\lceil \ell_{u} \rceil}}$) bounded by
\begin{equation*}
\label{eq:induct2}
2\Gamma (1 - \Omega(\gamma^{4/3}))^{\lfloor (n - \lceil \ell_{u} \rceil) / 2m_\Gamma\rfloor}\,
\end{equation*}
where $\lceil \ell \rceil$ is the smallest integer $\geq \ell$. This crossing contains a crossing through 
$R_{K_{u}, u}$ which we choose as $P_{K_{u}, u}$. Hence like in \eqref{eq:display_main2*}, its expected weight (with respect to $\eta_\delta$) is bounded by
\begin{equation}
\label{eq:display_main2***}
2\Gamma 2^{-(n - \lceil \ell_{u}\rceil)\Omega(\frac{\gamma^{4/3}}{\log \gamma^{-1}})}2^{-\lceil\ell_{u}\rceil} 2^{O(\gamma^2) \lceil \ell_{u} \rceil} = O(\Gamma)\delta^{\Omega(\frac{\gamma^{4/3}}{\log \gamma^{-1}})}2^{-\Omega(\lceil \ell_u\rceil)}\,.
\end{equation}

Summing all the weights from displays \eqref{eq:display_main2**}, \eqref{eq:display_main2*} and \eqref{eq:display_main2***} over $i\in [K_u]$ and $u\in \{v, w\}$ we conclude the proof of the lemma.
\end{proof}

\section{Proof of Theorem~\ref{thm:LQG}}
\label{sec:LQG}
In order to prove Theorem~\ref{thm:LQG}, we need to find a (short) sequence of $(M_\gamma^{\mathcal U}, \delta)$-balls (see the discussion following the statement of Theorem~\ref{thm:main}) whose union contains a path between two endpoints. 
To this end, a natural attempt is to construct a sequence of $(M_\gamma^{\mathcal U}, \delta)$-balls that covers the geodesic of LFPP with respect to some appropriately chosen $\delta'>\delta$. For every $\delta'$-sized piece in the geodesic, we will use a simple and (very) suboptimal strategy to cover it with  $(M_\gamma^{\mathcal U}, 
\delta)$-balls. While implementing this proof idea, one technical complication is that  we wish to construct $(M_\gamma^{\mathcal U}, \delta)$-balls to cover a piece of the geodesic as if we were covering a fixed line segment (as illustrated in Proposition~\ref{prop:second_moment}). To this end, it requires us to ``separate the randomness'' such that the construction of the $(M_\gamma^{\mathcal U}, \delta)$-balls at a location on the geodesic is conditionally independent of the geodesic given the LFPP weight at that 
location. To address this, we introduce a lattice approximation of LFPP, reveal its geodesic, and then construct the $(M_\gamma^{\mathcal U}, \delta)$-balls along a path that is near this geodesic (see Figure~\ref{fig:LQG_covering} for an illustration) --- then the separation of randomness follows from Markov field property for the circle average process. 
  An ancillary benefit of introducing the lattice version of LFPP is that we will reference to it in Section~\ref{sec-discrete}. 
  
We now describe our proof. For that we need some notations which will appear repeatedly throughout this section. Let us denote by $\D$ the open unit ball (in $\R^2$) centered at the 
origin. For any closed ball $B$, we let $B^*$ denote the concentric \textbf{closed} ball with doubled radius, and $B^{**}$ denote the concentric \textbf{open} ball with radius four times the radius of $B$ (we choose $B^{**}$ to be an open ball since harmonicity mentioned below fails on the boundary of $B^{**}$). 

Now consider a (continuum) GFF $h^{\mathfrak D}$ on a bounded domain $\mathfrak D$ with Dirichlet boundary 
condition (in later applications, we may choose $\mathfrak D$ to be $\mathcal U$ or $\D$). {If $B$ is a closed ball such that $\overline {B^{**}}$, i.e., the closure of $B^{**}$, is contained in $\mathfrak D$}, then by the \emph{Markov field property} (see \cite[Section~2.6]{S07} or \cite[Theorem~1.17 ]{berestycki16}) of GFF we can write
$h^{\mathfrak D} = h^{\mathfrak D, B^{**}} + \varphi^{\mathfrak D, B^{**}}\,,$
where 
\begin{itemize}
\item $h^{\mathfrak D, B^{**}}$ is a GFF on $B^{**}$ with Dirichlet boundary condition ($=$ 0 outside $B^{**}$).

\item  $\varphi^{\mathfrak D, B^{**}}$ is harmonic on $B^{**}$.

\item $h^{\mathfrak D, B^{**}}, \varphi^{\mathfrak D, B^{**}}$ are independent. 
\end{itemize}
This decomposition has a useful consequence for us as follows. Since $\varphi^{\mathfrak D, B^{**}}$ is harmonic on $B^{**}$, we get
\begin{equation}
\label{eq:decomposition}
h^{\mathfrak D}_\delta(v) = h^{\mathfrak D, B^{**}}_\delta(v) + \varphi^{\mathfrak D, B^{**}}(v)\,
\end{equation}
for all $v \in B^*$ and $\delta \in (0, 
r]$ where $r$ is the radius of $B$. The process $\{h_\delta^{\mathfrak D, B^{**}}(v): v \in B^*, 0 < \delta \leq r\}$ is independent with $\{\varphi^{\mathfrak D, B^{**}}(v): v \in B^*\}$ and also with $\{h_{\delta'}^{\mathfrak D}(w): w \in \mathfrak D \setminus B^{**}, \delta' < d_{\ell_2}(w, 
B^{**})\}$. The following lemma shows that the process $\varphi^{\mathfrak D, B^{**}}$ is smooth on $B^*$.
\begin{lemma}
\label{lem:smoothness2}
Let $B$ be a closed ball centered at $c_B$ with radius  $r$  such that $\overline {B^{**}}\subseteq \mathfrak D$. 
Then we have for all $v, w \in B^*$
$$\var(\varphi^{\mathfrak D, B^{**}}(v) - \varphi^{\mathfrak D, B^{**}}(w)) = O(|v - w|/r) \mbox{ and }\sup_{v \in B^*}\var(h_r^{\mathfrak D}(c_B) - \varphi^{\mathfrak D, B^{**}}(v)) = O(1)\,.$$
\end{lemma}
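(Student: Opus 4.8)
\textbf{Proof proposal for Lemma~\ref{lem:smoothness2}.}

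The plan is to reduce both statements to estimates on the GFF and its circle averages that are essentially already packaged in the excerpt, using the harmonicity of $\varphi^{\mathfrak D, B^{**}}$ crucially. For the first (modulus of continuity) bound: after rescaling and translating so that $B$ is the unit ball centered at the origin (which is legitimate since the statement is scale-covariant — the GFF is conformally invariant and circle averages transform correspondingly), it suffices to show $\var(\varphi^{\mathfrak D, B^{**}}(v) - \varphi^{\mathfrak D, B^{**}}(w)) = O(|v-w|)$ for $v, w$ in the closed ball of radius $2$, given that $\varphi^{\mathfrak D, B^{**}}$ is a harmonic function on the open ball of radius $4$ arising from the Markov decomposition $h^{\mathfrak D} = h^{\mathfrak D, B^{**}} + \varphi^{\mathfrak D, B^{**}}$. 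The key point is that $\varphi := \varphi^{\mathfrak D, B^{**}}$, being the harmonic extension into $B^{**}$ of a GFF-type boundary data, can be written as $\varphi(v) = h^{\mathfrak D}_{\delta_0}(v) - h^{\mathfrak D, B^{**}}_{\delta_0}(v)$ for any fixed $\delta_0 \in (0, r]$ and $v \in B^*$, by \eqref{eq:decomposition}. Better, because $\varphi$ is harmonic on $B^{**}$, its circle average at any radius equals its value at the center, so in fact $\varphi(v) = h^{\mathfrak D}_{\rho(v)}(v) - h^{\mathfrak D, B^{**}}_{\rho(v)}(v)$ where $\rho(v) = d_{\ell_2}(v, \partial B^{**})$ (which is $\geq 2r$ for $v \in B^*$, hence bounded below after normalization). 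One then estimates $\var(\varphi(v) - \varphi(w))$ by the triangle inequality for the $L^2$-norm, bounding $\var(h^{\mathfrak D}_{\rho(v)}(v) - h^{\mathfrak D}_{\rho(w)}(w))$ and the analogous quantity for $h^{\mathfrak D, B^{**}}$ separately. These differences of circle averages (of the GFF, at comparable radii $\gtrsim r$, at points at distance $|v-w|$) satisfy a Lipschitz-type bound $O(|v-w|/r)$; this is exactly the type of estimate established for the circle average process in \cite{DS11} (the joint Hölder continuity statement quoted in the introduction, used here at fixed large scale where it gives a genuine Lipschitz bound), and it is also what underlies the bound on $\var(\hat h^{\mathcal U}_\delta(v) - \hat h^{\mathcal U}_\delta(w))$ cited from \cite{HMPeres2010} in the proof of Proposition~\ref{prop:coupling}.

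For the second statement, $\sup_{v \in B^*}\var(h_r^{\mathfrak D}(c_B) - \varphi^{\mathfrak D, B^{**}}(v)) = O(1)$: again normalize so $r = 1$, $c_B = 0$. Write $h_1^{\mathfrak D}(0) = h_1^{\mathfrak D, B^{**}}(0) + \varphi(0)$ by \eqref{eq:decomposition}, so $h_1^{\mathfrak D}(0) - \varphi(v) = h_1^{\mathfrak D, B^{**}}(0) + (\varphi(0) - \varphi(v))$. The first term has variance $O(1)$: it is a circle average at radius $1$ of a GFF on $B^{**}$ (a domain of diameter $8$ containing the unit circle well inside), so its variance is bounded by a universal constant (one can bound it by the variance of the circle average of the free-space GFF plus a correction, or simply note the Green's function of $B^{**}$ against the uniform measure on the unit circle is $O(1)$). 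The second term, $\var(\varphi(0) - \varphi(v))$, is $O(1)$ by the first part of the lemma (since $|0 - v| \leq 2 = 2r$). Adding, $\var(h_1^{\mathfrak D}(0) - \varphi(v)) = O(1)$ uniformly in $v \in B^*$.

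The main obstacle — really the only non-bookkeeping point — is justifying the Lipschitz bound $\var(h^{\mathfrak D}_{\rho(v)}(v) - h^{\mathfrak D}_{\rho(w)}(w)) = O(|v - w|/r)$ for circle averages of the GFF at comparable macroscopic radii, and the corresponding bound for $h^{\mathfrak D, B^{**}}$ on its own domain, with constants uniform over all admissible balls $B$. The cleanest route is to invoke the explicit covariance formula \eqref{eq:GFF_cov} (equivalently the identity-in-law with the white-noise decomposition of Lemma~\ref{lem-identity-in-law}) and differentiate/expand in $(v, \delta)$; the $\sqrt{}$-Hölder continuity of \cite{DS11} is for small $\delta$, but on the region $\delta \asymp r$ with points at distance $\ll r$ the covariance kernel $\pi\int G_{\mathfrak D}(z,z')\mu^v_\delta(dz)\mu^{v'}_{\delta'}(dz')$ is smooth in its arguments away from the boundary, giving a genuine $O(|v-w|/r)$ (indeed $O((|v-w|/r)^2)$ would also hold, but $O(|v-w|/r)$ suffices and matches the statement). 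One must be slightly careful that the constant does not blow up as $B^{**}$ approaches $\partial\mathfrak D$: this is handled because all points involved ($v, w \in B^*$) are at distance $\geq 2r$ from $\partial B^{**}$, and for $h^{\mathfrak D, B^{**}}$ the relevant Green's function is that of $B^{**}$ itself, which after scaling is a fixed domain; for $h^{\mathfrak D}$ the extra contribution from $\partial \mathfrak D$ only makes the field smoother on $B^*$ (the difference $G_{\mathfrak D} - G_{B^{**}}$ is harmonic in each variable on $B^{**}$), so no deterioration occurs. Assembling these observations, which are all standard, completes the proof.
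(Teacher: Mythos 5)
Your argument is correct in substance, and the second half (writing $h_r^{\mathfrak D}(c_B)-\varphi^{\mathfrak D,B^{**}}(v)$ as $h_r^{\mathfrak D,B^{**}}(c_B)+(\varphi^{\mathfrak D,B^{**}}(c_B)-\varphi^{\mathfrak D,B^{**}}(v))$, controlling the first term by scale/translation invariance of the GFF on $B^{**}$ and the second by the first part) is exactly what the paper does. For the first bound, however, you take a slightly longer route than necessary. The paper's one clean idea there is to use the \emph{independence} of $h^{\mathfrak D,B^{**}}$ and $\varphi^{\mathfrak D,B^{**}}$ in the Markov decomposition: applying \eqref{eq:decomposition} at $\delta=r$ to both $v$ and $w$ and taking variances of the two independent summands gives immediately
\[
\var\bigl(\varphi^{\mathfrak D,B^{**}}(v)-\varphi^{\mathfrak D,B^{**}}(w)\bigr)\le \var\bigl(h_r^{\mathfrak D}(v)-h_r^{\mathfrak D}(w)\bigr),
\]
so only the single circle-average Lipschitz estimate \eqref{eq:smoothness} (from \cite{HMPeres2010}) is needed, for $h^{\mathfrak D}$ alone. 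Your triangle-inequality route instead requires the same estimate for \emph{both} $h^{\mathfrak D}$ and $h^{\mathfrak D,B^{**}}$ (fine, since $v,w$ are at distance $\ge 2r$ from $\partial B^{**}$ and $B^{**}$ rescales to a fixed domain), and your ``better'' variant with the varying radii $\rho(v)\ne\rho(w)$ additionally forces you to control the dependence on the radius, which is avoidable: the fixed-$\delta_0=r$ identity you state first already suffices, so I would drop the $\rho(\cdot)$ version. One small inaccuracy worth flagging: your parenthetical claim that $O((|v-w|/r)^2)$ ``would also hold'' is false for circle averages --- the circle-average kernel is only Lipschitz, not smooth, in the spatial variable, and $\var(h_\delta^{\mathfrak D}(v)-h_\delta^{\mathfrak D}(w))$ is genuinely of order $|v-w|/\delta$ (the quadratic bound is a feature of the heat-kernel-mollified field of Lemma~\ref{lem:smoothness}, not of circle averages). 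Since you do not use that claim, it does not affect the validity of the proof.
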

\begin{proof}
Since $h_r^{B^{**}}$ and $\varphi^{\mathfrak D, B^{**}}$ are independent, we get from \eqref{eq:decomposition}
$$\var(\varphi^{\mathfrak D, B^{**}}(v) - \varphi^{\mathfrak D, B^{**}}(w)) \leq \var(h_r^{\mathfrak D}(v) - h_r^{\mathfrak D}(w))\,,$$
for all $v, w \in B^*$. But we know (see the proof of \cite[Proposition~2.1]{HMPeres2010})
\begin{equation}
\label{eq:smoothness}
\var(h_r^{\mathfrak D}(v) - h_r^{\mathfrak D}(w)) = O(|v - w|/r)\,,
\end{equation}
which gives the required bound on $\var(\varphi^{\mathfrak D, B^{**}}(v) - \varphi^{\mathfrak D, B^{**}}(w))$. For the second bound, notice that
$$\var(h_r^{\mathfrak D}(c_B) - \varphi^{\mathfrak D, B^{**}}(c_B)) = \var(h^{\mathfrak D, B^{**}}_r(c_B))\,.$$
Thus it suffices to prove $\var(h^{\mathfrak D, B^{**}}_r(c_B)) = O(1)$ in view of the bound on $\var(\varphi^{\mathfrak D, B^{**}}(v) - \varphi^{\mathfrak D, B^{**}}(w))$. But $h^{\mathfrak D, B^{**}}_r(c_B)$ is identically distributed as $h^{\D}_{0.25}(\mathbf{0})$ by the scale and translation invariance of GFF and hence $\var(h^{\mathfrak D, B^{**}}_r(c_B))$ is a finite constant (see the discussions in \cite[Section~2.1]{S07} and \cite[Theorem~1.9]{berestycki16}). 
\end{proof}

Now consider a Radon measure $\mu$ on $\mathfrak D$ and 
some $\delta > 0$. We call a closed Euclidean ball $B \subseteq \mathfrak D$ with a rational center as a \emph{$(\mu, \delta)$-ball} if $\mu(B) \leq 
\delta^2$. For any compact $A \subseteq \mathfrak D$, let $N(\mu, \delta, A)$ denote the minimum number of $(\mu, \delta)$-balls required to cover $A$. Our next proposition provides a crude upper bound on the \emph{second moment} of $N(M_\gamma^{\D}, \delta, A)$ (see \eqref{eq-limit-LQG} for the definition of $M_\gamma^\D$) when $A$ is a 
segment inside $\D$. We remark that the KPZ relation proved in \cite{DS11} gives the sharp exponent for a typical $N(M_\gamma^{\D}, \delta, A)$, and our analysis resembles that of \cite{DS11}.
	
\begin{proposition}
\label{prop:second_moment}
Let $L$ denote the straight line segment 
joining $-\bm{0.25}$ and $\bm{0.25}$. 
For any $\delta \in (0, 1)$, we can find a collection of $(M_\gamma^\D, \delta)$-balls $\mathscr S(M_\gamma^{\D}, \delta, L)$ such that
\begin{enumerate}[(a)]
\item Balls in $\mathscr S(M_\gamma^{\D}, \delta, L)$ cover $L$.
\item All the balls in $\mathscr S(M_\gamma^{\D}, \delta, L)$ are contained in $0.25\overline \D$. 
\item For some positive absolute constant $C$, we have
$\E(|\mathscr S(M_\gamma^{\D}, \delta, L)|^2) = O_\gamma(\delta^{-2 - C\gamma})$.
\end{enumerate}
\end{proposition}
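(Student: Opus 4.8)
The plan is to cover $L$ by a dyadic scheme of balls whose sizes are determined by the circle average process. Fix $\delta \in (0,1)$ and set $n_0$ to be the integer with $2^{-n_0} \approx \delta^{2/(2+\gamma Q)}$ for a suitable constant (the precise relation will be dictated by the KPZ-type bookkeeping below); roughly, balls at Euclidean scale $2^{-k}$ will have LQG mass $\asymp 2^{-k(2+\gamma^2/2)}\e^{\gamma h_{2^{-k}}(v)}$ by \eqref{eq-limit-LQG}. First I would tile a neighborhood of $L$ inside $0.25\overline{\D}$ by dyadic squares (equivalently, balls of comparable radius) and, starting from some coarse scale, recursively decide for each ball $B$ of radius $2^{-k}$ centered at a point $v$ on (or very near) $L$: if $M_\gamma^{\D}(2B) \le \delta^2$, stop and include $B$ (suitably inflated to a genuine $(M_\gamma^\D,\delta)$-ball with rational center, which only costs a constant factor); otherwise subdivide into $O(1)$ children at scale $2^{-k-1}$ and recurse. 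Stopping is forced at the finest scale $2^{-n_1}$ where $n_1$ is chosen so that even the maximal plausible value of the field makes the mass drop below $\delta^2$; since the circle average has Gaussian tails with variance $\log(2^k)$, one takes $n_1 \asymp \frac{2}{2+\gamma^2/2}\log_2\delta^{-1}$ up to logarithmic corrections, and a crude deterministic bound (e.g. via the a.s.\ H\"older regularity of the circle average process, or via a high-probability event controlling $\max h_{2^{-k}}$) guarantees termination. Property~(a) holds because every point of $L$ lies in some accepted ball, and Property~(b) holds because all balls stay in a small neighborhood of $L$, which we arrange to sit inside $0.25\overline{\D}$ by only working with scales $2^{-k}$ smaller than some fixed constant.

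The heart of the matter is Property~(c), the second moment bound. Write $\mathscr S = \mathscr S(M_\gamma^\D,\delta,L)$ and decompose $|\mathscr S|^2 = \sum_{B,B'} \mathbf 1_{\{B \in \mathscr S\}}\mathbf 1_{\{B' \in \mathscr S\}}$ over pairs of dyadic balls centered on $L$. For a single ball $B$ at scale $2^{-k}$ centered at $v$, the event $\{B \in \mathscr S\}$ requires (at least) that the parent was not accepted, but it is cleaner and sufficient to bound $\P(B \in \mathscr S) \le \P(M_\gamma^\D(2B_{\mathrm{parent}}) > \delta^2)$. Using the scaling relation for the LQG measure from \cite{DS11} (the KPZ computation: $\E M_\gamma^\D(B_{2^{-k}}(v))^q \asymp 2^{-k(2 + \gamma^2 q/2 - \gamma^2 q^2/2)\cdot}$-type bounds after peeling off the coarse-field contribution using \eqref{eq:decomposition} and Lemma~\ref{lem:smoothness2} — or more simply a one-point estimate $\E M_\gamma^\D(B_{2^{-k}}(v)) \asymp 2^{-k(2+\gamma^2/2)}$ together with a Gaussian moment of $\e^{\gamma h}$), one gets $\P(B \in \mathscr S) \le 2^{-k \xi}\delta^{-2s}$-type bounds by Markov/Chebyshev on a well-chosen moment; optimizing the moment order against $k$ yields that the dominant contribution comes from a single scale $k^* \asymp \frac{2}{2+\gamma^2/2}\log_2 \delta^{-1}$, with $\E|\mathscr S| = O(\delta^{-1 + \Theta(\gamma^2)})$, consistent with the KPZ dimension. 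For the second moment, the key input is a \emph{two-point} estimate: for two balls $B, B'$ at scales $2^{-k}, 2^{-k'}$ with centers at distance $\rho$ along $L$, one needs $\P(B \in \mathscr S, B' \in \mathscr S) \lesssim \P(B \in \mathscr S)\cdot \P(B' \in \mathscr S) \cdot (\text{local correction factor in } \rho)$. This follows from the near-independence of $M_\gamma^\D$ restricted to well-separated regions — made precise via the white-noise / Markov-field decomposition \eqref{eq:decomposition}, with the harmonic part $\varphi$ controlled by Lemma~\ref{lem:smoothness2} — combined with the standard moment computation for Gaussian multiplicative chaos of a segment, exactly as in the KPZ analysis of \cite{DS11}. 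Summing the two-point bound over all pairs, the diagonal-ish pairs ($\rho \lesssim 2^{-\min(k,k')}$) contribute $O(\E|\mathscr S| \cdot \max_k \#\{B' \text{ near } B\})$ and the off-diagonal pairs sum to $O((\E|\mathscr S|)^2)$ up to a $\delta^{-O(\gamma)}$ loss coming from the fact that the GMC two-point function of a segment integrates to something of order $\delta^{-2-C\gamma}$ rather than $\delta^{-2}$; this is precisely where the extra $C\gamma$ in the exponent is born.

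The main obstacle, and where I would spend the most care, is the two-point estimate and the attendant separation-of-randomness argument: one must show that accepting $B$ and accepting $B'$ are approximately independent once the balls are separated, despite the long-range correlations of the GFF. The clean way is to condition on the coarse field (the average of $h^\D$ on a ball of radius $\asymp \rho$ around each center, which is common to both), use \eqref{eq:decomposition} to write the restrictions of $M_\gamma^\D$ to $2B$ and to $2B'$ as (deterministic multiplicative constants depending on the coarse field) times independent GMC measures, and then bound the resulting Gaussian moment-generating factor — the point being that $\var(\text{coarse field}) = \log\rho^{-1} + O(1)$, so $\E[(\text{constant})^q \cdot (\text{constant}')^q]$ produces exactly the factor $\rho^{-\gamma^2 q^2/(\cdots)}$ that, after summation over $\rho$ and optimization in $q$, gives the $\delta^{-C\gamma}$ overshoot. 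A secondary technical nuisance is the rational-center requirement and the inflation from $2B$ to a genuine $(M_\gamma^\D,\delta)$-ball, but this is handled by a routine covering argument at a constant multiplicative cost and does not affect the exponent. Everything else — termination of the recursion, Property~(b), passing from $\E|\mathscr S|$ to $\E|\mathscr S|^2$ — is bookkeeping built on the single-scale and two-scale GMC moment bounds of \cite{DS11}.
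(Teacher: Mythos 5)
Your covering construction (a dyadic stopping rule along $L$, accepting a ball once its mass drops below $\delta^2$) is essentially the one in the paper, and your handling of (a), (b) and termination is fine (the paper gets termination for free from the a.s.\ finiteness and non-atomicity of $M_\gamma^{\D}$ rather than from a deterministic finest scale, but either works). Where you diverge is the second moment, and here the comparison is instructive. You propose the ``standard'' multifractal route: a genuine two-point estimate $\P(B,B'\in\mathscr S)\lesssim \P(B\in\mathscr S)\P(B'\in\mathscr S)\cdot(\text{correction in }\rho)$ obtained by conditioning on the coarse field and exploiting near-independence of the GMC mass in separated regions. The paper avoids this entirely with a counting trick: since the candidate balls at level $k$ form a one-parameter family of cardinality $2^{k-1}$ along $L$, the total number of candidates at all scales coarser than $k$ is at most $2^{k}$, so one can drop one of the two indicators and write
$$\Big(\sum_{k}|\ball(M_\gamma^{\D},k,\delta)|\Big)^2\le \sum_{k}2^{k+1}\sum_{B\in\ball_k}\mathbf 1_{\{M_\gamma^{\D}(B)>\delta^2\}}\,,$$
reducing the second moment to a \emph{single-ball} tail bound. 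That tail bound, $\P(M_\gamma^{\D}(B)>\delta^2)=O_\gamma(\delta^{-8/3})2^{-8k/3}$ for $k>(1+C'\gamma)\log_2\delta^{-1}$, is obtained by a three-way union bound (Gaussian tail of $h_{2^{-k}}^{\D}(c_B)$, the harmonic correction via Lemma~\ref{lem:smoothness2} and Lemma~\ref{lem:gaussian_tail}, and the fourth moment of $M_\gamma^{\D}(\tfrac14\D)$ via Markov), and its decay beats the factor $2^{2k}$ coming from the counting. What the paper's approach buys is that no decorrelation statement is ever needed; what your approach would buy, if carried out, is a sharper and more conceptual bound closer to the KPZ computation of \cite{DS11}.

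That said, as written your proposal does not yet constitute a proof: the two-point estimate is exactly the step you defer, and it is the only genuinely hard one. Two specific points would need care. First, for pairs $B,B'$ that are nested or overlapping (which your stopping rule produces in abundance, since every accepted ball has all its ancestors rejected), there is no approximate independence to exploit, and your ``diagonal-ish'' bookkeeping undercounts: for a fixed $B$ at level $k$ the number of finer-scale candidates $B'$ near $B$ grows like $\sum_{k'\ge k}2^{k'-k}$, so this regime cannot be dismissed as $O(\E|\mathscr S|)$ times a constant. Second, your heuristic $\E|\mathscr S|=O(\delta^{-1+\Theta(\gamma^2)})$ reflects the typical ball size but not the moment, which is dominated by atypical balls; the correct target is $\delta^{-1-O(\gamma)}$ (the paper's deterministic term $2\delta^{-1-C'\gamma}$ already forces this), and getting the signs of these corrections right is precisely where the exponent $-2-C\gamma$ comes from. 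I would recommend adopting the paper's counting reduction, after which your single-ball moment estimates suffice.
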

\begin{proof}
For each $k \in \N$, let $\ball_k$ denote the collection of all (closed) balls of radius $2^{-k - 1}$ whose centers lie in the set $\{-\tfrac{\bm{1}}{\bm{4}} + \bm{2^{-k-1}} 
+ j\bm{2^{-k}}: j \in [0, 2^{k-1} - 1] \cap \Z\}$. The balls in $\ball_k$ are nested in a natural way. In particular any ball $B$ in $\ball_k$ has a unique \emph{parent} $B^{k'}$ in $\ball_{k'}$ (where $k' \leq k$) such that $B \subseteq B^{k'}$. 
 We include a $(M_\gamma^{\D}, \delta)$-ball $B\in \ball_k$ in $\mathscr S(M_\gamma^{\D}, \delta, L)$ if the $M_\gamma^\D$ volume of the \emph{most recent} parent of 
$B$ is bigger than $\delta^2$. Since the measure $M_\gamma^\D$ is almost surely finite and has no atoms (see \cite{DS11} and \cite[Theorem~2.1]{berestycki16}), it follows that $\mathscr S(M_\gamma^{\D}, \delta, \mathcal 
L)$ satisfies Condition~(a) (and obviously (b)). It also follows from the construction that
\begin{equation}
\label{eq:second_moment1*}
|\mathscr S(M_\gamma^{\D}, \delta, L)| \leq 2\delta^{-1 - C'\gamma} + \sum_{k > (1 + C'\gamma)\log_2 \delta^{-1}} |\ball(M_\gamma^\D, k, \delta)|\,,
\end{equation}
where $\ball(M_\gamma^\D, k, \delta)$ denotes the collection of balls in $\ball_k$ with 
$M_\gamma^\D$ volume $> \delta^2$, and $C' > 1$ is a constant to be specified 
later.  By a simple bound we get that
\begin{eqnarray}
\label{eq:second_moment1**}
\Big(\sum_{k > (1 + C'\gamma)\log_2 \delta^{-1}} |\ball(M_\gamma^\D, k, \delta)|\Big)^2 &\leq& \sum_{k > (1 + C'\gamma)\log_2 \delta^{-1}} \sum_{B \in \ball_k}2\sum_{k' \leq k}\sum_{B' \in \ball_{k'}} \mathbf 1_{\{M_\gamma^\D( B) > \delta^2, M_\gamma^\D( B') > \delta^2\}}\nonumber \\
&\leq& \sum_{k > (1 + C'\gamma)\log_2 \delta^{-1}} \sum_{B \in \ball_k}2\sum_{k' \leq k}\sum_{B' \in \ball_{k'}} \mathbf 1_{\{M_\gamma^\D( B) > \delta^2\}}\nonumber \\
&\leq& \sum_{k > (1 + C'\gamma)\log_2 \delta^{-1}} 2^{k + 1}\sum_{B \in \ball_k}\mathbf 1_{\{M_\gamma^\D( B) > \delta^2\}}\,.
\end{eqnarray}
Next we compute the probability that any given ball $ B \coloneqq c_B + 2^{-k}\overline \D$ in $\ball_k$ has $M_\gamma^\D$ 
volume at least $\delta^2$. To this end, denote the LQG measure on $B^{**}$ obtained from $h^{\D, B^{**}}$ as $M_\gamma^{\D, B^{**}}$ (recall the decomposition in \eqref{eq:decomposition}). More precisely, $M_\gamma^{\D, B^{**}}$ is the almost sure weak limit of the sequence of measures $M_{\gamma, n}^{\D, B^{**}}$ given by
\begin{equation}\label{eq-limit-LQG2}
M_{\gamma, n}^{\D, B^{**}} = \e^{\gamma h_{2^{-n}}^{\D, B^{**}}(z)}2^{- n\gamma^2/2}\sigma(dz)\,
\end{equation}
(see \eqref{eq-limit-LQG}). Since $M_\gamma^\D$ and $M_\gamma^{\D, B^{**}}$ are the weak limits of $M_{\gamma, n}^{\D}$'s and $M_{\gamma, n}^{\D, B^{**}}$'s respectively, we get from the decomposition in \eqref{eq:decomposition}
\begin{equation}
\label{eq:second_moment1}
M_\gamma^\D( B) \leq 4^{-k}2^{-\frac{k\gamma^2}{2}} \times \e^{\gamma h_{2^{-k}}^\D(c_B)}\times \e^{\gamma \max_{v \in  B^*}(\varphi^{\D, B^{**}}(v) - h_{2^{-k}}^\D(c_B))} \times 4^k M_{\gamma}^{\D, B^{**}}( B)\,.
\end{equation}
From the scale and translation invariance property of GFF it follows that $4^{k}M_{\gamma}^{B^{**},\D}(B)$ is 
identically distributed as $M_\gamma^\D(\tfrac{1}{4}\D)$. Also observe that if each of the last three factors in \eqref{eq:second_moment1} is smaller than $4^{k/3}\delta^{2/3} = \e^{\gamma\frac{2}{3\gamma}\log(\delta 2^k)}$, we will have $M_\gamma^{\D}(B) \leq \delta^2$. Thus, we obtain that
\begin{equation}
\label{eq:second_moment2}
\P(M_\gamma^\D(B) > \delta^2) \leq \P\big(h_{2^{-k}}^\D(c_B) \geq \frac{2}{3\gamma}\log(\delta 2^k)\big) + \P\big(\Max_{B, \D; k} \geq \frac{2}{3\gamma}\log(\delta 2^k)\big) + \P(M_\gamma^\D(\tfrac{1}{4} \D) \geq 4^{k/3}\delta^{2/3})\,,
\end{equation}
where $\Max_{B, \D; k} = \max_{v \in B^*}(\varphi^{\D, B^{**}}(v) - h_{2^{-k}}^\D(c_B))$.
Since $\var(h_{2^{-k}}^\D(c_B)) = k\log 2 +  O(1)$ (see, e.g., \eqref{eq:variance} and \eqref{eq:coupling1}) and $\delta^{1 + C'\gamma} > 2^{-k}$, the first term on the right hand side of \eqref{eq:second_moment2} can be bounded as
$$\P\big(h_{2^{-k}}^{\D}(c_B) \geq \frac{2}{3\gamma}\log(\delta 2^k)\big) \leq 
\P\Big(Z \geq \frac{C'k\log2}{3\sqrt{k\log 2 + O(1)}}\Big) \leq \e^{-C'^2\Omega(k)\log2} = 2^{-C'^2\Omega(k)}\,,$$
where $Z$ is a standard Gaussian variable. Thus we can choose $C'$ big enough so that the 
bound above becomes $< 2^{-10k}$. From Lemma~\ref{lem:smoothness2} we know that
$\max_{v \in B^*}\var(\varphi^{\D, B^{**}}(v) - h_{2^{-k}}^\D(c_B)) = O(1)$ and that $\var(\varphi^{\D, B^{**}}(v) - \varphi^{\D, B^{**}}(w)) \leq 
O\big(\tfrac{|v - w|}{2^{-k}}\big)$ for all $v, w \in B^*$. Hence by Lemma~\ref{lem:gaussian_tail} we get that
\begin{eqnarray*}
\P\big(\Max_{B, \D; k} \geq \frac{2}{3\gamma}\log(\delta 2^k)\big) &\leq& \P\big(\Max_{B, \D; k} \geq \frac{2}{3\gamma}\log(2^{-\frac{k}{1 + C'\gamma}} 2^k)\big) = \P\big(\Max_{B, \D; k} \geq \frac{2C'}{3 + 3C'\gamma}\log 2^k\big)\\
&=& \e^{-\Omega(k^2)} = O(2^{-10k})\,.
\end{eqnarray*}
The only remaining term is $\P(M_\gamma^\D(\tfrac{1}{4} \D) \geq 4^{k/3}\delta^{2/3})$. In order to bound this probability we will use the fact that $\E \big(M_\gamma^\D(\tfrac{1}{4} \D)\big)^4 < \infty$ for small $\gamma$ (see \cite{Kahane85} and also \cite[Theorem~2.11 and Theorem~5.5]{RV14}). Hence by Markov's inequality
$$\P(M_\gamma^\D(\tfrac{1}{4} \D) \geq 4^{k/3}\delta^{2/3})= O_\gamma(\delta^{-8/3})2^{-8k/3}\,.$$
Plugging the last three estimates into \eqref{eq:second_moment2} we get
$$\P(M_\gamma^\D(B) > \delta^2) \leq O_\gamma(\delta^{-8/3})2^{-8k/3}\,.$$
Taking expectation on both sides in \eqref{eq:second_moment1**} and using the bound above one gets:
\begin{eqnarray*}
&&\E\Big(\sum_{k > (1 + C'\gamma)\log_2 \delta^{-1}} |\ball(M_\gamma, k, \delta)|\Big)^2 \leq \sum_{k > (1 + C'\gamma)\log_2 \delta^{-1}} 2^{2k + 1}O_\gamma(\delta^{-8/3})2^{-8k/3} \nonumber \\
&=& O_\gamma(\delta^{-8/3})\sum_{k > (1 + C'\gamma)\log_2 \delta^{-1}}2^{-2k/3} = O_\gamma(\delta^{-8/3})\delta^{2/3(1 + \gamma C')} = O_\gamma(\delta^{-2 + 2C'\gamma/3})\,.
\end{eqnarray*}
The lemma follows from this bound and \eqref{eq:second_moment1*} for $C =2C'$.
\end{proof}
The proof of Proposition~\ref{prop:second_moment} can be easily adapted to accommodate the following set-up. 
\begin{proposition}
\label{cor:second_moment}
Let $S \subseteq V$ be a closed square of length $2^{-k}$ whose vertices lie in $2^{-k} \Z^2$. Then for any $\delta \in (0, 2^{-k})$ we have
$$\E N(M_\gamma^{\mathcal U}, \delta, S)^2 = O_{\gamma, \mathcal U, \epsilon}((2^k\delta)^{-4 - O(\gamma)}2^{kO(\gamma)})\,.$$
\end{proposition}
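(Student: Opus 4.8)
\textbf{Proof plan for Proposition~\ref{cor:second_moment}.}

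The plan is to mimic the proof of Proposition~\ref{prop:second_moment}, with three modifications: we work with the square $S$ rather than the segment $L$, we use the domain $\mathcal U$ rather than $\D$, and we need a scaling step that reduces the case of a length-$2^{-k}$ square to a fixed-size configuration. First I would apply the scale and translation invariance of the GFF. Let $f(z) = 2^k(z - c_S)$ be the affine map sending $S$ to a fixed unit square $S_0 = [-\tfrac12,\tfrac12]^2$ centered at the origin, and correspondingly $\mathcal U$ to $\mathcal U' = f(\mathcal U) \supseteq S_0$; note $\mathcal U'$ contains a neighborhood of $S_0$ of size $\Omega(2^k \epsilon)$. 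Under this map $h^{\mathcal U}_{\delta'}(v)$ has the same law as $h^{\mathcal U'}_{2^k\delta'}(f(v))$, and the LQG measure transforms with the appropriate Jacobian: $M_\gamma^{\mathcal U}(B)$ is distributed (after the change of variables, tracking the $2^{-2k}\cdot 2^{-k^2\gamma^2/2}$-type prefactor absorbed into comparing $\delta$ with $\delta'' := 2^k\delta\cdot(\text{lower-order factor})$) as a constant multiple of $M_\gamma^{\mathcal U'}$ of the image ball. The upshot is that $N(M_\gamma^{\mathcal U}, \delta, S)$ is stochastically dominated by $N(M_\gamma^{\mathcal U'}, \delta'', S_0)$ for $\delta'' = \Theta(2^k\delta)$ up to a factor that is a $2^{kO(\gamma)}$ correction (this is exactly where the $2^{kO(\gamma)}$ and the shift in the exponent of $2^k\delta$ come from). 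The key point is that the diameter of $\mathcal U'$ only enters the constants through $O_{\mathcal U,\epsilon}(\cdot)$-type bounds, and since $\mathcal U' \supseteq S_0$ with a uniformly controlled safety margin, the constants from Proposition~\ref{prop:second_moment}'s proof carry over with dependence only on $(\gamma,\mathcal U,\epsilon)$.

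Next I would rerun the dyadic covering argument from the proof of Proposition~\ref{prop:second_moment} with $S_0$ in place of $L$. Instead of a one-dimensional family $\ball_k$ of intervals' worth of balls along $L$, I take $\ball_j$ to be the collection of balls of radius $2^{-j-1}$ centered at the points of a $2^{-j}$-grid inside $S_0$, so $|\ball_j| = \Theta(4^j)$ now (two-dimensional) rather than $\Theta(2^j)$. The nesting/parent structure is identical. A ball is included in the covering collection if it is a $(M_\gamma^{\mathcal U'},\delta'')$-ball whose most recent parent has $M_\gamma^{\mathcal U'}$-volume $>(\delta'')^2$. Exactly as before, the cardinality is at most $O((\delta'')^{-2-C'\gamma})$ plus $\sum_{j > (1+C'\gamma)\log_2(\delta'')^{-1}} |\ball(M_\gamma^{\mathcal U'}, j, \delta'')|$; squaring and using the same crude two-index bound gives a sum of $4^j \sum_{B\in\ball_j}\mathbf 1_{\{M_\gamma^{\mathcal U'}(B)>(\delta'')^2\}}$ over the relevant range of $j$. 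The per-ball tail bound $\P(M_\gamma^{\mathcal U'}(B) > (\delta'')^2) = O_\gamma((\delta'')^{-8/3})2^{-8j/3}$ is proved verbatim as in Proposition~\ref{prop:second_moment}: decompose $h^{\mathcal U'} = h^{\mathcal U', B^{**}} + \varphi^{\mathcal U', B^{**}}$ via the Markov field property (valid since $\overline{B^{**}}\subseteq\mathcal U'$ because $B\subseteq S_0$ and $\mathcal U'$ has the safety margin), bound the circle-average term by a Gaussian tail, the harmonic maximum $\Max_{B,\mathcal U';j}$ via Lemma~\ref{lem:smoothness2} and Lemma~\ref{lem:gaussian_tail}, and the rescaled mass $4^j M_\gamma^{\mathcal U', B^{**}}(B) \myeq M_\gamma^{\mathcal U'}(\tfrac14\D)$ (up to bounded constants depending on the safety margin) by its finite fourth moment and Markov. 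Taking expectations and summing the geometric series over $j$ yields $\E N(M_\gamma^{\mathcal U'}, \delta'', S_0)^2 = O_{\gamma,\mathcal U,\epsilon}((\delta'')^{-2 + 2C'\gamma/3})$, and undoing the rescaling (which costs the aforementioned $2^{kO(\gamma)}$ and turns $(\delta'')^{-2}$ into $(2^k\delta)^{-4-O(\gamma)}$ after accounting for the fact that the two-dimensional covering number squared scales like the fourth power) gives the claimed bound.

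I expect the main obstacle to be bookkeeping the exponents correctly through the scaling step: one must carefully track how the normalization $2^{-n\gamma^2/2}$ in the definition of $M_\gamma$ interacts with the affine rescaling by $2^k$, since the LQG measure is not simply the pushforward of Lebesgue measure but involves the Gaussian exponential, so $M_\gamma^{\mathcal U}(2^{-k}\cdot)$ picks up a random factor $2^{-k\gamma^2/2}\e^{\gamma h_{2^{-k}}(\cdot)}$ on top of the deterministic $2^{-2k}$. This is precisely the source of the apparently mysterious $(2^k\delta)^{-4-O(\gamma)}2^{kO(\gamma)}$ form of the bound (as opposed to a clean $(2^k\delta)^{-2-O(\gamma)}$): the squared two-dimensional covering number contributes the power $4$, while the Gaussian prefactor and the harmonic-correction maximum over the larger domain contribute the $2^{kO(\gamma)}$ slack. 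Everything else --- the Markov field decomposition, the Gaussian tail estimates, the fourth-moment bound for the LQG mass of a fixed ball --- transfers from Proposition~\ref{prop:second_moment} with only cosmetic changes, the dependence of constants on $\mathcal U$ and $\epsilon$ entering only through the uniform lower bound on the safety margin $d_{\ell_2}(S_0, \partial\mathcal U')$.
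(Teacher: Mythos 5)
Your overall architecture (rescale $S$ to a unit square, redo the dyadic covering two-dimensionally, reuse the per-ball three-factor decomposition) is the right one, but two steps that you dismiss as ``verbatim''/``cosmetic'' actually break, and both breakages are quantitatively relevant in exactly the regime where the proposition is used. First, the Gaussian tail for the circle average does \emph{not} carry over with constants depending only on $(\gamma,\mathcal U,\epsilon)$: the rescaled domain $\mathcal U'=2^k(\mathcal U-c_S)$ has diameter $\Theta(2^k)$, so for $c_B\in S_0$ one has $\var(h^{\mathcal U'}_{2^{-j}}(c_B))=(j+k)\log 2+O_{\mathcal U,\epsilon}(1)$ rather than $j\log 2+O(1)$. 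Consequently the bound $\P(h^{\mathcal U'}_{2^{-j}}(c_B)\geq \tfrac{2}{3\gamma}\log(\delta''2^j))\leq \e^{-\Omega(C'^2)j^2/(j+k)}$ is useless for $j\ll k$, and the levels $j$ just above $(1+C'\gamma)\log_2(\delta'')^{-1}$ \emph{are} in that range whenever $2^k\delta=\Theta(1)$ --- which is precisely how the proposition is invoked in the proof of Theorem~\ref{thm:LQG} (there $2^{m'}\delta=\delta^{2C_2\gamma}$). Summing $16^j$ over those uncontrolled levels costs $2^{\Theta(k/C'^2)}$, which is not $2^{kO(\gamma)}$ for an absolute $C'$. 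The fix is to peel off the scale-$2^{-k}$ field \emph{before} rescaling: take a ball $B_S\supseteq S$ of radius $\Theta(2^{-k})$ with $\overline{B_S^{**}}\subseteq\mathcal U$, write $M_\gamma^{\mathcal U}(A)\leq \e^{\gamma\max_{B_S^*}\varphi^{\mathcal U,B_S^{**}}}M_\gamma^{\mathcal U,B_S^{**}}(A)$, run your argument for the exactly scale-invariant zero-boundary measure $M_\gamma^{\mathcal U,B_S^{**}}$ conditionally on $\varphi^{\mathcal U,B_S^{**}}$ (now the variance really is $j\log 2+O(1)$), and only at the end integrate $\E\,\e^{(4+O(\gamma))\gamma\max_{B_S^*}\varphi/2}=2^{kO(\gamma)}\cdot O_{\mathcal U,\epsilon}(1)$ using Lemma~\ref{lem:smoothness2}. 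This is the actual source of the $2^{kO(\gamma)}$ factor, alongside the $2^{-k\gamma^2/2}$ normalization you did identify.

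Second, the moment step does not transfer. In two dimensions the analogue of \eqref{eq:second_moment1**} carries the weight $|\ball_j|\cdot\sum_{j'\leq j}|\ball_{j'}|=\Theta(16^j)$ per level, not $\Theta(4^j)$, and against this the fourth-moment Markov bound $\P(M_\gamma(\tfrac14\D)\geq 4^{j/3}(\delta'')^{2/3})=O_\gamma((\delta'')^{-8/3})2^{-8j/3}$ produces the divergent series $\sum_j 2^{(4-8/3)j}$. You need $\P\leq O_\gamma((\delta'')^{-2p/3})2^{-2pj/3}$ with $p>6$, i.e.\ a $p$-th moment of the LQG mass with $p\geq 7$ (finite for $\gamma$ small, since the critical moment is $4/\gamma^2$); then $\sum_j 16^j2^{-2pj/3}$ converges and yields $(\delta'')^{-4-O(\gamma)}$. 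Relatedly, your stated intermediate conclusion $\E N(M_\gamma^{\mathcal U'},\delta'',S_0)^2=O_\gamma((\delta'')^{-2+2C'\gamma/3})$ cannot be right --- the second moment of a covering number of typical size $(\delta'')^{-2}$ must be at least $(\delta'')^{-4}$ --- and the claim that the exponent $4$ only appears ``when undoing the rescaling'' is a bookkeeping error: the fourth power must already be present in the rescaled bound, and undoing the rescaling merely substitutes $\delta''=\Theta(2^k\delta\cdot 2^{kO(\gamma)})$. These are all repairable, but as written the plan does not yield the stated estimate.
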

The other ingredient we need for the proof of Theorem~\ref{thm:LQG} is the lattice approximation of LFPP. 
Toward this end fix any integer $k > 4$ and define the set $\mathbb 
L_k$ as $2^{-(k - 3)} \Z^2 \cap V$. We can treat $\mathbb L_k$ as a subgraph of the lattice $2^{-(k - 3)} \Z^2$. The centers of the squares in $\mathbb L_k$ (i.e. the squares of side length $2^{-(k -3)}$ with vertices in $\mathbb L_k$) form another set $\mathbb L_k^\star \subseteq V$ which will be treated as the dual graph of $\mathbb L_k$. 
We can define a LFPP distance $D_{\gamma, k}^\star(\cdot, \cdot)$ on $\mathbb L^\star_k$ as follows:
\begin{equation}\label{eq-LFPP-def2}
D_{\gamma, k}^\star(v, w)=\min_{P}\sum_{u \in \pi}e^{\gamma(1 + C\gamma)h_{2^{-k}}^{\mathcal U}(u)/2} \mbox{ for } v, w\in \mathbb L_k^\star,
\end{equation}
where $P$ ranges over all paths in $\mathbb L_k^\star$ connecting $v$ and $w$, and $C$ is the same absolute constant as in 
Proposition~\ref{prop:second_moment}. Our next lemma is a consequence of Theorem~\ref{thm:main}.
\begin{lemma}
\label{lem:star_lfpp}
There exists an absolute constant $\gamma_0>0$ such that for all $0<\gamma \leq \gamma_0$ and  any integer $k > 4$
$$\max_{v, w\in  \mathbb L_k^\star}\E D_{\gamma, k}^\star(v, w) = O_{\gamma, \mathcal U, \epsilon}(1)2^{k(1 - \Omega(\gamma^{4/3} / \log \gamma^{-1}))}\,.$$
\end{lemma}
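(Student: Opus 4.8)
\textbf{Proof plan for Lemma~\ref{lem:star_lfpp}.}

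The plan is to compare the lattice distance $D_{\gamma, k}^\star$ on $\mathbb L_k^\star$ with the continuum LFPP distance $D_{\gamma', 2^{-k}}^{\mathcal U}$ for a slightly larger parameter $\gamma' = \gamma(1 + C\gamma)/2$, and then to invoke Theorem~\ref{thm:main} with $\gamma'$ in place of $\gamma$. First I would observe that $\gamma' = \tfrac{\gamma}{2}(1 + C\gamma)$ is still a small absolute constant whenever $\gamma \leq \gamma_0$ for $\gamma_0$ chosen small (depending on $C$), so Theorem~\ref{thm:main} applies to $\gamma'$: there is a $\delta_{\gamma', \mathcal U, \epsilon}$ and $c^* > 0$ with $\max_{v,w \in V}\E D_{\gamma', \delta}^{\mathcal U}(v,w) \leq \delta^{c^* (\gamma')^{4/3}/\log (\gamma')^{-1}}$ for $\delta \leq \delta_{\gamma', \mathcal U, \epsilon}$. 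Since $(\gamma')^{4/3}/\log(\gamma')^{-1} = \Theta(\gamma^{4/3}/\log\gamma^{-1})$ (the factor $\tfrac12(1+C\gamma)$ only changes constants), substituting $\delta = 2^{-k}$ gives $\max_{v,w}\E D_{\gamma', 2^{-k}}^{\mathcal U}(v,w) \leq 2^{-k \cdot \Omega(\gamma^{4/3}/\log\gamma^{-1})}$ for all $k$ large enough (say $k \geq k_0(\gamma, \mathcal U, \epsilon)$), and for the finitely many remaining $k$ one absorbs everything into the constant $O_{\gamma, \mathcal U, \epsilon}(1)$.

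The main work is the discretization step: showing that for any two points $v, w \in \mathbb L_k^\star$ one has $\E D_{\gamma, k}^\star(v,w) = O_{\gamma,\mathcal U,\epsilon}(1)\, 2^{k}\,\E D_{\gamma', 2^{-k}}^{\mathcal U}(v,w) + (\text{negligible})$. Given a realization of $h_{2^{-k}}^{\mathcal U}$, take a near-optimal piecewise-$C^1$ path $P$ in $V$ realizing $D_{\gamma', 2^{-k}}^{\mathcal U}(v, w)$ (up to a factor $2$, say). I would cover $P$ by the lattice path $\pi$ in $\mathbb L_k^\star$ consisting of all dual vertices $u$ whose associated square $S_u$ (of side $2^{-(k-3)}$) is hit by $P$, traversed in order along $P$; consecutive such squares share an edge or a corner, and one can always route through edge-adjacent squares at the cost of inserting at most one extra square per step, so $\pi$ is a genuine path in $\mathbb L_k^\star$ whose length (number of vertices) is $O(2^{k}\,\mathrm{length}(P \cap S_u \text{ pieces}))$; more precisely the number of vertices of $\pi$ is at most a constant times $2^{k}$ times the Euclidean length of $P$, which in turn is at most $2^{k}\cdot O(D_{\gamma',2^{-k}}^{\mathcal U}(v,w))$ on the event that $h_{2^{-k}}^{\mathcal U}$ is not too negative anywhere (a very-low-probability complement handled as in Lemma~\ref{lem-bound-straight-Lambda}). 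The weight $\sum_{u \in \pi} e^{\gamma'(1+C\gamma)^{-1}\cdot(1+C\gamma) h_{2^{-k}}^{\mathcal U}(u)} = \sum_{u\in\pi} e^{\gamma' h_{2^{-k}}^{\mathcal U}(u)}$ — wait, note $\gamma(1+C\gamma)/2 = \gamma'$, so the lattice weights are exactly $e^{\gamma' h_{2^{-k}}^{\mathcal U}(u)}$ — is then comparable to $2^{k}\int_P e^{\gamma' h_{2^{-k}}^{\mathcal U}(z)}|dz|$ up to the oscillation of $h_{2^{-k}}^{\mathcal U}$ within each square. By Lemma~\ref{lem:smoothness} (applied to the $\eta$-approximation, or directly to the circle average) together with a union bound over the $O(2^{2k})$ squares in $V$, the maximal oscillation of $h_{2^{-k}}^{\mathcal U}$ over squares of side $2^{-(k-3)}$ is $O(\sqrt{k})$ with overwhelming probability; this contributes a factor $e^{\gamma' O(\sqrt k)} = 2^{o(k)}$, which is harmless since it is subsumed into $2^{-k\Omega(\gamma^{4/3}/\log\gamma^{-1})}\cdot 2^{o(k)}$, still giving an exponent bounded away from $1$ by $\Omega(\gamma^{4/3}/\log\gamma^{-1})$ after adjusting the implicit constant in $\Omega$. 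Taking expectations, using Cauchy–Schwarz on the low-probability bad events exactly as in the proof of Lemma~\ref{lem-bound-straight-Lambda} (the second moment of $\sum_{u \in \text{straight lattice path}} e^{\gamma' h_{2^{-k}}^{\mathcal U}(u)}$ is at most $2^{O(k)}$), we obtain $\E D_{\gamma, k}^\star(v,w) \leq O_{\gamma,\mathcal U,\epsilon}(1)\, 2^{k(1-\Omega(\gamma^{4/3}/\log\gamma^{-1}))}$, as claimed.

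The step I expect to be the main obstacle is making the path-to-lattice-path comparison quantitatively clean, in particular controlling the total weight of $\pi$ by $2^k$ times $\int_P e^{\gamma' h^{\mathcal U}_{2^{-k}}}$ without the oscillation term blowing up: one must be careful that the near-optimal continuum path $P$ could in principle be very long (large Euclidean length) in regions where the field is very negative, so that naively $2^k \cdot \mathrm{length}(P)$ is not controlled by $2^k D_{\gamma',2^{-k}}^{\mathcal U}(v,w)$. The fix is to note that on the high-probability event $\{\min_{u \in \mathbb L_k} h_{2^{-k}}^{\mathcal U}(u) \geq -C_1\sqrt{k}\}$ one has $\int_P e^{\gamma' h^{\mathcal U}_{2^{-k}}}|dz| \geq e^{-\gamma' C_1 \sqrt k}\,\mathrm{length}(P)$, hence $\mathrm{length}(P) \leq e^{\gamma' C_1\sqrt k} D_{\gamma',2^{-k}}^{\mathcal U}(v,w)$, and the $e^{\gamma' C_1\sqrt k} = 2^{o(k)}$ factor is again absorbed. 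On the complementary (tiny-probability) event one bounds $D_{\gamma,k}^\star(v,w)$ crudely by the weight of any fixed lattice path and controls its second moment as in Lemma~\ref{lem-bound-straight-Lambda}; multiplied by the Gaussian-tail probability $e^{-\Omega(k)}$ this is negligible.
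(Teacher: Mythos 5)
Your proposal follows essentially the same route as the paper's proof: apply Theorem~\ref{thm:main} with exponent $\gamma(1+C\gamma)/2$ to produce a light continuum path, discretize it into a path in $\mathbb L_k^\star$, control the oscillation of $h_{2^{-k}}^{\mathcal U}$ over $2^{-(k-3)}$-squares by $O(\sqrt k)$ via the smoothness estimate and Lemma~\ref{lem:gaussian_tail}, and dispose of the complementary event by Cauchy--Schwarz against a fixed lattice path. The only substantive difference is that the paper selects the covering squares by an exit-time procedure guaranteeing the continuum path spends length $\Omega(2^{-k})$ in each selected square, which is exactly what legitimizes charging each lattice weight $e^{\gamma' h(u)}$ locally to $2^k\int_{P\cap S_u}e^{\gamma' h}$ (a global comparison via $|\pi|\le O(2^k\,\mathrm{length}(P))$ and the worst-case oscillation along the whole path would lose a factor $e^{\gamma' O(k)}$, which is fatal); with your ``all squares hit by $P$'' selection, a square that $P$ merely clips must instead be charged to the integral over its $3\times 3$ block of neighbors, inside which $P$ necessarily travels distance at least $2^{-(k-3)}$ --- a fixable point your sketch leaves implicit.
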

\begin{proof}
We assume without loss of generality that $\epsilon \geq 2^{-k}$. For $v, w \in \mathbb L_k^\star$, we see from Theorem~\ref{thm:main} that there exists a (random) simple, piecewise smooth path 
$P_{k, \gamma}(v, w)$ such that
\begin{equation}
\label{eq:star_lfpp1}
\E \big(\int_{P_{k, \gamma}(v, w)} \e^{\gamma(1 + C\gamma) h_{2^{-k}}^{\mathcal U}(z)/2}|dz|\big) = O_{\gamma, \mathcal U, \epsilon}(2^{-k\Omega(\gamma^{4/3} / \log \gamma^{-1})})\,.
\end{equation}
In order to create a lattice path (i.e. in $\mathbb L_k^\star$) between $v$ and $w$ from $P_{k, \gamma}(v, w)$ we follow a simple procedure. Let $p_{k, \gamma; 0} = v$. For $i\geq 1$, we employ the following procedure inductively: start at $p_{k, \gamma, i-1}\in P_{k, \gamma}(v, w)$ and continue along the path $P_{k, \gamma}(v, w)$ until it exits the smallest square $S_i$ satisfying (a) $p_{k, \gamma; i} \in S_i$, (b) $d_{\ell_2}({p_{k, \gamma; i}}, \partial S_i) \geq 2^{-(k - 3)}\cdot 2^{-1}$ and (c) the vertices of $S_i$ are in $\mathbb L_k^\star$. We stop the procedure until 
we reach $w$. At the end of this procedure we will get a sequence of squares $S_1, S_2, \cdots,$ where each $S_i$ has side length at most $2\cdot2^{-(k - 3)}$ and the union of $S_i \cap \mathbb L_k^\star$'s 
contains a lattice path $P_{k, \gamma}^\star(v, w)$ between $v$ and $w$. Now let us recall from \eqref{eq:smoothness} that
\begin{equation*}
\label{eq:continuity}
\var (h_{2^{-k}}^{\mathcal U}(z) - h_{2^{-k}}^{\mathcal U}(z')) = O\Big(\frac{|z - z'|}{2^{-k}}\Big)\,
\end{equation*}
for all $z, z' \in V$. Then by Lemma~\ref{lem:gaussian_tail} there exists a positive absolute constant $C_1$ such that
\begin{equation}
\label{eq:star_lfpp2}
\P\Big(\max_{z, z' \in V, |z - z'| \leq 2^{-(k - 4)}}(h_{2^{-k}}^{\mathcal U}(z) - h_{2^{-k}}^{\mathcal U}(z')) \geq C_1\sqrt{k} + x\Big) = \e^{-\Omega(x^2)}\,
\end{equation}
for all $x \geq 0$ (subdivide $V$ into a collection $\mathfrak R$ of squares of length $2^{-k}$ and on each $R \in \mathfrak R$ consider the process $Y_R \coloneqq \{h_{2^{-k}}^{\mathcal U}(z) - h_{2^{-k}}^{\mathcal U}(c_R): z \in R\}$ where $c_R$ is 
the center of $R$). Now define an event $E_k$ as
$$E_k = \big\{\max_{z, z' \in V, |z - z'| \leq 2^{-(k - 4)}}(h_{2^{-k}}^{\mathcal U}(z) - h_{2^{-k}}^{\mathcal U}(z')) \leq   (C_1 + 1)\sqrt{k}\big\}\,.$$
Since the Euclidean length of $P_{k, \gamma}(v, w)$ inside each $S_i$ is $\Omega(2^{-k})$ (by Condition (b) in our construction) and that $|S_i \cap \mathbb L_k^\star| = O(1)$, we get from \eqref{eq:star_lfpp1} that
\begin{eqnarray*}
\E\Big( \big(\sum_{z \in P_{k, \gamma}^\star(v, w)}\e^{\gamma(1 + C\gamma) h_{2^{-k}}^{\mathcal U}(z)}\big) \mathbf 1_{E_k}\Big) &=& O(2^k) \e^{(C_1 + 1)\sqrt{k}}O_{\gamma, \mathcal U, \epsilon}(2^{-k\Omega(\gamma^{4/3} / \log \gamma^{-1})}) \nonumber \\
&=& O_{\gamma, \mathcal U, \epsilon}(2^{k(1 -\Omega(\gamma^{4/3} / \log \gamma^{-1}))})\,.
\end{eqnarray*}
In addition, from \eqref{eq:star_lfpp2} and the Cauchy-Schwarz inequality (similar to \eqref{eq:display_cs1} and \eqref{eq:display_cs2}) we obtain
\begin{eqnarray*}
\E\Big( \big(\sum_{z \in P_k(v, w)}\e^{\gamma(1 + C\gamma) h_{2^{-k}}^{\mathcal U}(z)/2}\big) \mathbf 1_{E_k^c}\Big) = O_{\mathcal U, \epsilon}(2^k)2^{-k(\Omega_{\mathcal U, \epsilon}(1) - O(\gamma^2))} = O_{\mathcal U, \epsilon}\big(2^{k(1 - \Omega_{\mathcal U, \epsilon}(1))}\big)\,,
\end{eqnarray*}
where $P_k(v, w)$ is a path between $v$ and $w$ in the graph $\mathbb L_k^\star$ which minimizes the graph distance. Choosing $P_{k, \gamma}^\star(v, w)$ and $P_k(v, w)$ on $E_k$ and $E_k^c$ respectively as a lattice path between $v$ and $w$, we get the desired bound on $\E D^\star_{\gamma, k}(v, w)$ from the previous two displays.
\end{proof}
For $v, w\in V$, we call the path minimizing $D_{\gamma, k}^\star(v, w)$ as the \emph{$(\gamma, k)$-geodesic} between $v$ and $w$. Now we pick squares $[v]_k$ and 
$[w]_k$ in $\mathbb L_k$ that contain $v$ and $w$ 
respectively (when there are several choices, we will pick an arbitrary but fixed one). Define $\S^\star(k, v, w)$ as the collection of squares in $\mathbb L_k$ whose centers lie along the $(\gamma, k)$-geodesic between $c([v]_k)$ and $c([w]_k)$ in $\mathbb 
L_k^\star$. Here $c([v]_k)$ and $c([w]_k)$ are the centers 
of the squares $[v]_k$ and $[w]_k$ respectively. Thus $\S^\star(k, v, w)$ is  a sequence of neighboring squares connecting $v$ and $w$ (see Figure~\ref{fig:LQG_covering}). An important observation is the following. 
\begin{observation}
\label{observ:covering}
The Euclidean distance between the boundary of any square in $\S^\star(k, v, w)$ and $\mathbb L_k^\star$ is at least $2^{-(k - 2)}$.
\end{observation}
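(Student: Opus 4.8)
\textbf{Proof proposal for Observation~\ref{observ:covering}.}

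The plan is to track the dyadic scales carefully. Recall that $\mathbb L_k = 2^{-(k-3)}\Z^2 \cap V$, so the squares in $\mathbb L_k$ have side length $2^{-(k-3)}$, and $\mathbb L_k^\star$ consists of the \emph{centers} of these squares, i.e. points of the form $2^{-(k-3)}(\mathbf a + (\tfrac12,\tfrac12))$ for $\mathbf a\in\Z^2$ (restricted to lie in $V$). The collection $\S^\star(k, v, w)$ consists of those squares of $\mathbb L_k$ whose centers lie on the $(\gamma,k)$-geodesic, which is a path in the dual graph $\mathbb L_k^\star$. So any square $S\in \S^\star(k,v,w)$ has its center $c_S$ lying in $\mathbb L_k^\star$, and the key point is simply that \emph{all} points of $\mathbb L_k^\star$ are spaced $2^{-(k-3)}$ apart on a grid that is offset by half a cell from $\mathbb L_k$.

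First I would fix $S\in\S^\star(k,v,w)$ with center $c_S$ and take an arbitrary point $y$ on $\partial S$; then $|y - c_S| \le \tfrac{\sqrt 2}{2}\cdot 2^{-(k-3)}$, or one can just use the $\ell_\infty$-distance $\tfrac12 \cdot 2^{-(k-3)} = 2^{-(k-2)}$ to the boundary along the axis directions — actually the cleanest route is to observe that the boundary of $S$ is at $\ell_\infty$-distance exactly $2^{-(k-2)}$ from $c_S$ only along the coordinate directions, so I should instead directly estimate the distance from $\partial S$ to an arbitrary point $z\in\mathbb L_k^\star$. Write $z = c_{S'}$ for some square $S' \in \mathbb L_k$. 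If $S' = S$ then $z = c_S$ is the center, which is at distance $2^{-(k-3)}/2 = 2^{-(k-2)}$ from $\partial S$, giving the bound with equality-type slack. If $S' \neq S$, then $S$ and $S'$ are distinct squares in the grid $\mathbb L_k$, so $c_S$ and $c_{S'}$ differ by a nonzero vector in $2^{-(k-3)}\Z^2$; hence $\|c_S - c_{S'}\|_\infty \ge 2^{-(k-3)}$. Since every point of $\partial S$ is within $\ell_\infty$-distance $2^{-(k-3)}/2 = 2^{-(k-2)}$ of $c_S$, the triangle inequality gives that the $\ell_\infty$-distance (hence the Euclidean distance) from $\partial S$ to $c_{S'}$ is at least $2^{-(k-3)} - 2^{-(k-2)} = 2^{-(k-2)}$.

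Thus in either case the distance from $\partial S$ to any point of $\mathbb L_k^\star$ is at least $2^{-(k-2)}$, which is exactly the claimed bound; taking the infimum over $z\in\mathbb L_k^\star$ and over $y\in\partial S$ finishes the argument. I do not anticipate any real obstacle here — the only thing to be careful about is the bookkeeping of which grid has which spacing and the fact that $\mathbb L_k^\star$ is the \emph{half-integer} shifted copy of $\mathbb L_k$ (so that centers of squares never lie on boundaries of other squares at the same scale), but since the problem reduces to the elementary geometric fact that two distinct points of a grid of mesh $h$ are at $\ell_\infty$-distance $\ge h$ while the boundary of a grid square is within $h/2$ of its center, there is nothing deep. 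The main thing to state explicitly is that the geodesic lies in $\mathbb L_k^\star$ so that the vertices it passes through are precisely centers of squares in $\mathbb L_k$, after which the estimate is purely deterministic.
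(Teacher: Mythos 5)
Your argument is correct: the paper states this as an Observation with no proof precisely because it reduces to the elementary grid geometry you spell out (centers of $\mathbb L_k$-squares form a mesh-$2^{-(k-3)}$ grid, the boundary of a square lies at $\ell_\infty$-distance exactly half a mesh from its own center, and the reverse triangle inequality in $\ell_\infty$ handles all other centers). This is the intended justification, so there is nothing to compare beyond noting that you have filled in the omitted details correctly.
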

\begin{proof}[Proof of Theorem~\ref{thm:LQG}]
Gven a $\delta \in (0, 1)$ and $v, w \in V$, we will construct a collection of $(M_\gamma^{\mathcal U}, \delta)$-balls $\S(\delta, v, w)$ such that the union of these balls contains a path between 
$v$ and $w$. Then it suffices to show 
\begin{equation}\label{eq-to-show-LGD}
\E|\S(\delta, v, w)| = O_{\gamma, \mathcal U, \epsilon}(1)\delta^{-1 + \Omega(\tfrac{\gamma^{4/3}}{\log \gamma^{-1}})}
\end{equation}
for proving Theorem~\ref{thm:LQG}. Thus, the proof of Theorem~\ref{thm:LQG} is naturally divided into two steps.

\smallskip

\noindent {\bf Step 1. Constructing $\S(\delta, v, w)$.} Given $S \in \S^\star(k, v, w)$ (recall the definition of $\S^\star(k, v, w)$ from just above Observation~\ref{observ:covering}) with $S \neq [v]_k\mbox{ or }[w]_k$, divide each boundary segment of $S$ into 16 segments (with 
disjoint interiors) of length $2^{-(k + 1)}$. For any such segment $T$, let $B_T$ denote the closed ball of 
radius $2^{-(k + 2)}$ centered at the midpoint of $T$. Thus $T$ is a diameter segment of $B_{T}$. Cover $T$ with the minimum possible number of $(M_{\gamma}^{\mathcal U, B_T^{**}}, \delta\e^{-\gamma h_{2^{-k}}^{\mathcal U}(c(S)) / 2}\e^{-C_2\gamma\sqrt{k \log 2}})$-balls contained in $B_T$ where $M_{\gamma}^{\mathcal U, B_T^{**}}$ is the LQG measure on $B_T^{**}$ constructed from 
$h^{\mathcal U, B_T^{**}}$ (see \eqref{eq-limit-LQG2}), $c(S)$ is the center of $S$ and $C_2$ 
is an absolute constant to be specified in \eqref{eq-def-C-2} later. The reason behind the choice of $\delta\e^{-\gamma h_{2^{-k}}^{\mathcal U}(c(S)) / 2}\e^{-C_2\gamma\sqrt{k \log 2}}$ will become clear once we construct $\S(\delta, v, w)$ (see \eqref{eq-become-clear}). Denote the collection of all such balls from all the segments of $\partial S$ as 
$\S(S, \delta)$. If $S = [v]_k$ or $[w]_k$, we simply cover $S$ with minimum possible number of $(M_\gamma^{\mathcal U}, \delta)$-balls and include them in $\S(S, \delta)$. 
Finally define
$$\S^{\star\star}(k, \delta, v, w) = \bigcup_{S \in \S^{\star}(k, v, w)} \S(S, \delta)\,.$$
It is clear that the union of balls in $\S^{\star\star}(k, \delta, v, w)$ contains a path between $v$ and $w$. Figure~\ref{fig:LQG_covering} gives an illustration of this construction.
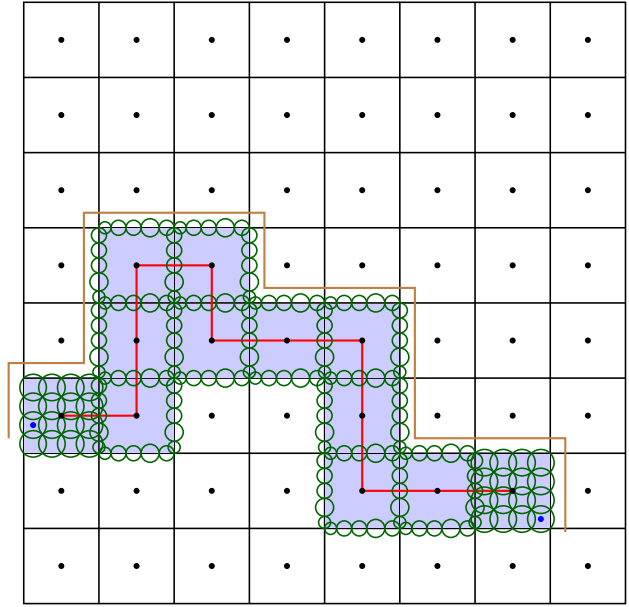
\begin{figure}[!htb]
	\centering
	\begin{tikzpicture}[semithick, scale = 2]
	\draw (0, 0) rectangle (4, 4);
	\foreach \x in {0.5, 1,  ..., 3.5}{
		\draw (\x, 0) -- (\x, 4);
		}
	\foreach \y in {0.5, 1,  ..., 3.5}{
			\draw (0, \y) -- (4, \y);
	}

	\foreach \x/\y in {0.25 / 1.25, 0.75 / 1.25, 0.75/1.75, 0.75 / 2.25, 1.25 / 2.25, 1.25/ 1.75, 1.75/1.75, 2.25/1.75, 2.25/1.25, 2.25/0.75, 2.75/0.75, 3.25/0.75}{
		\fill [blue!20] (\x - 0.245, \y - 0.245) rectangle (\x + 0.245, \y + 0.245);
		}

	\draw [red, thick] (0.25, 1.25) -- (0.75, 1.25) -- (0.75, 2.25) -- (1.25, 2.25) -- (1.25, 1.75) -- (2.25, 1.75) -- (2.25, 0.75) -- (3.25, 0.75);	
	
	\draw[green!40!black] (0.54, 1) circle [radius = 0.04];
	\draw[green!40!black] (0.63, 1) circle [radius = 0.05];
	\draw[green!40!black] (0.73, 1) circle [radius = 0.05];
	\draw[green!40!black] (0.84, 1) circle [radius = 0.06];
	\draw[green!40!black] (0.95, 1) circle [radius = 0.05];
	
	\draw[green!40!black] (0.5, 1.04) circle [radius = 0.04];
	\draw[green!40!black] (0.5, 1.14) circle [radius = 0.06];
	\draw[green!40!black] (0.5, 1.25) circle [radius = 0.05];
	\draw[green!40!black] (0.5, 1.35) circle [radius = 0.05];
	\draw[green!40!black] (0.5, 1.45) circle [radius = 0.05];
	
	\draw[green!40!black] (0.54, 1.5) circle [radius = 0.04];
	\draw[green!40!black] (0.63, 1.5) circle [radius = 0.05];
	\draw[green!40!black] (0.73, 1.5) circle [radius = 0.05];
	\draw[green!40!black] (0.84, 1.5) circle [radius = 0.06];
	\draw[green!40!black] (0.95, 1.5) circle [radius = 0.05];
	
	\draw[green!40!black] (1, 1.04) circle [radius = 0.04];
	\draw[green!40!black] (1, 1.14) circle [radius = 0.06];
	\draw[green!40!black] (1, 1.25) circle [radius = 0.05];
	\draw[green!40!black] (1, 1.35) circle [radius = 0.05];
	\draw[green!40!black] (1, 1.45) circle [radius = 0.05];

	\draw[green!40!black] (0.5, 1.54) circle [radius = 0.04];
	\draw[green!40!black] (0.5, 1.64) circle [radius = 0.06];
	\draw[green!40!black] (0.5, 1.75) circle [radius = 0.05];
	\draw[green!40!black] (0.5, 1.85) circle [radius = 0.05];
	\draw[green!40!black] (0.5, 1.95) circle [radius = 0.05];
	
	\draw[green!40!black] (0.54, 2) circle [radius = 0.04];
	\draw[green!40!black] (0.63, 2) circle [radius = 0.05];
	\draw[green!40!black] (0.73, 2) circle [radius = 0.05];
	\draw[green!40!black] (0.84, 2) circle [radius = 0.06];
	\draw[green!40!black] (0.95, 2) circle [radius = 0.05];
	
	\draw[green!40!black] (1, 1.54) circle [radius = 0.04];
	\draw[green!40!black] (1, 1.64) circle [radius = 0.06];
	\draw[green!40!black] (1, 1.75) circle [radius = 0.05];
	\draw[green!40!black] (1, 1.85) circle [radius = 0.05];
	\draw[green!40!black] (1, 1.95) circle [radius = 0.05];
	
	
	\draw[green!40!black] (0.5, 2.04) circle [radius = 0.04];
	\draw[green!40!black] (0.5, 2.14) circle [radius = 0.06];
	\draw[green!40!black] (0.5, 2.25) circle [radius = 0.05];
	\draw[green!40!black] (0.5, 2.35) circle [radius = 0.05];
	\draw[green!40!black] (0.5, 2.45) circle [radius = 0.05];
	
	\draw[green!40!black] (0.54, 2.5) circle [radius = 0.04];
	\draw[green!40!black] (0.63, 2.5) circle [radius = 0.05];
	\draw[green!40!black] (0.73, 2.5) circle [radius = 0.05];
	\draw[green!40!black] (0.84, 2.5) circle [radius = 0.06];
	\draw[green!40!black] (0.95, 2.5) circle [radius = 0.05];
	
	\draw[green!40!black] (1, 2.04) circle [radius = 0.04];
	\draw[green!40!black] (1, 2.14) circle [radius = 0.06];
	\draw[green!40!black] (1, 2.25) circle [radius = 0.05];
	\draw[green!40!black] (1, 2.35) circle [radius = 0.05];
	\draw[green!40!black] (1, 2.45) circle [radius = 0.05];
	
    \draw[green!40!black] (1.04, 2) circle [radius = 0.04];
    \draw[green!40!black] (1.13, 2) circle [radius = 0.05];
    \draw[green!40!black] (1.23, 2) circle [radius = 0.05];
    \draw[green!40!black] (1.34, 2) circle [radius = 0.06];
    \draw[green!40!black] (1.45, 2) circle [radius = 0.05];

    \draw[green!40!black] (1.04, 2.5) circle [radius = 0.04];
    \draw[green!40!black] (1.13, 2.5) circle [radius = 0.05];
    \draw[green!40!black] (1.23, 2.5) circle [radius = 0.05];
    \draw[green!40!black] (1.34, 2.5) circle [radius = 0.06];
    \draw[green!40!black] (1.45, 2.5) circle [radius = 0.05];
    
    \draw[green!40!black] (1.5, 2.04) circle [radius = 0.04];
    \draw[green!40!black] (1.5, 2.14) circle [radius = 0.06];
    \draw[green!40!black] (1.5, 2.25) circle [radius = 0.05];
    \draw[green!40!black] (1.5, 2.35) circle [radius = 0.05];
    \draw[green!40!black] (1.5, 2.45) circle [radius = 0.05];
\draw[green!40!black] (1.04, 1.5) circle [radius = 0.04];
\draw[green!40!black] (1.13, 1.5) circle [radius = 0.05];
\draw[green!40!black] (1.23, 1.5) circle [radius = 0.05];
\draw[green!40!black] (1.34, 1.5) circle [radius = 0.06];
\draw[green!40!black] (1.45, 1.5) circle [radius = 0.05];
	    
	    \draw[green!40!black] (1.5, 1.54) circle [radius = 0.04];
	    \draw[green!40!black] (1.5, 1.64) circle [radius = 0.06];
	    \draw[green!40!black] (1.5, 1.75) circle [radius = 0.05];
	    \draw[green!40!black] (1.5, 1.85) circle [radius = 0.05];
	    \draw[green!40!black] (1.5, 1.95) circle [radius = 0.05];

\draw[green!40!black] (1.54, 1.5) circle [radius = 0.04];
\draw[green!40!black] (1.63, 1.5) circle [radius = 0.05];
\draw[green!40!black] (1.73, 1.5) circle [radius = 0.05];
\draw[green!40!black] (1.84, 1.5) circle [radius = 0.06];
\draw[green!40!black] (1.95, 1.5) circle [radius = 0.05];

\draw[green!40!black] (1.54, 2) circle [radius = 0.04];
\draw[green!40!black] (1.63, 2) circle [radius = 0.05];
\draw[green!40!black] (1.73, 2) circle [radius = 0.05];
\draw[green!40!black] (1.84, 2) circle [radius = 0.06];
\draw[green!40!black] (1.95, 2) circle [radius = 0.05];

\draw[green!40!black] (2, 1.54) circle [radius = 0.04];
\draw[green!40!black] (2, 1.64) circle [radius = 0.06];
\draw[green!40!black] (2, 1.75) circle [radius = 0.05];
\draw[green!40!black] (2, 1.85) circle [radius = 0.05];
\draw[green!40!black] (2, 1.95) circle [radius = 0.05];

\draw[green!40!black] (2.04, 1.5) circle [radius = 0.04];
\draw[green!40!black] (2.13, 1.5) circle [radius = 0.05];
\draw[green!40!black] (2.23, 1.5) circle [radius = 0.05];
\draw[green!40!black] (2.34, 1.5) circle [radius = 0.06];
\draw[green!40!black] (2.45, 1.5) circle [radius = 0.05];

\draw[green!40!black] (2.04, 2) circle [radius = 0.04];
\draw[green!40!black] (2.13, 2) circle [radius = 0.05];
\draw[green!40!black] (2.23, 2) circle [radius = 0.05];
\draw[green!40!black] (2.34, 2) circle [radius = 0.06];
\draw[green!40!black] (2.45, 2) circle [radius = 0.05];

\draw[green!40!black] (2.5, 1.54) circle [radius = 0.04];
\draw[green!40!black] (2.5, 1.64) circle [radius = 0.06];
\draw[green!40!black] (2.5, 1.75) circle [radius = 0.05];
\draw[green!40!black] (2.5, 1.85) circle [radius = 0.05];
\draw[green!40!black] (2.5, 1.95) circle [radius = 0.05];
	\draw[green!40!black] (2.04, 1) circle [radius = 0.04];
	\draw[green!40!black] (2.13, 1) circle [radius = 0.05];
	\draw[green!40!black] (2.23, 1) circle [radius = 0.05];
	\draw[green!40!black] (2.34, 1) circle [radius = 0.06];
	\draw[green!40!black] (2.45, 1) circle [radius = 0.05];
	
	\draw[green!40!black] (2, 1.04) circle [radius = 0.04];
	\draw[green!40!black] (2, 1.14) circle [radius = 0.06];
	\draw[green!40!black] (2, 1.25) circle [radius = 0.05];
	\draw[green!40!black] (2, 1.35) circle [radius = 0.05];
	\draw[green!40!black] (2, 1.45) circle [radius = 0.05];

	\draw[green!40!black] (2.5, 1.04) circle [radius = 0.04];
	\draw[green!40!black] (2.5, 1.14) circle [radius = 0.06];
	\draw[green!40!black] (2.5, 1.25) circle [radius = 0.05];
	\draw[green!40!black] (2.5, 1.35) circle [radius = 0.05];
	\draw[green!40!black] (2.5, 1.45) circle [radius = 0.05];
\draw[green!40!black] (2.04, 0.5) circle [radius = 0.04];
\draw[green!40!black] (2.13, 0.5) circle [radius = 0.05];
\draw[green!40!black] (2.23, 0.5) circle [radius = 0.05];
\draw[green!40!black] (2.34, 0.5) circle [radius = 0.06];
\draw[green!40!black] (2.45, 0.5) circle [radius = 0.05];

\draw[green!40!black] (2, 0.54) circle [radius = 0.04];
\draw[green!40!black] (2, 0.64) circle [radius = 0.06];
\draw[green!40!black] (2, 0.75) circle [radius = 0.05];
\draw[green!40!black] (2, 0.85) circle [radius = 0.05];
\draw[green!40!black] (2, 0.95) circle [radius = 0.05];

\draw[green!40!black] (2.5, 0.54) circle [radius = 0.04];
\draw[green!40!black] (2.5, 0.64) circle [radius = 0.06];
\draw[green!40!black] (2.5, 0.75) circle [radius = 0.05];
\draw[green!40!black] (2.5, 0.85) circle [radius = 0.05];
\draw[green!40!black] (2.5, 0.95) circle [radius = 0.05];
    \draw[green!40!black] (2.54, 0.5) circle [radius = 0.04];
    \draw[green!40!black] (2.63, 0.5) circle [radius = 0.05];
    \draw[green!40!black] (2.73, 0.5) circle [radius = 0.05];
    \draw[green!40!black] (2.84, 0.5) circle [radius = 0.06];
    \draw[green!40!black] (2.95, 0.5) circle [radius = 0.05];

    \draw[green!40!black] (2.54, 1) circle [radius = 0.04];
    \draw[green!40!black] (2.63, 1) circle [radius = 0.05];
    \draw[green!40!black] (2.73, 1) circle [radius = 0.05];
    \draw[green!40!black] (2.84, 1) circle [radius = 0.06];
    \draw[green!40!black] (2.95, 1) circle [radius = 0.05];
    
    \draw[green!40!black] (3, 0.54) circle [radius = 0.04];
    \draw[green!40!black] (3, 0.64) circle [radius = 0.06];
    \draw[green!40!black] (3, 0.75) circle [radius = 0.05];
    \draw[green!40!black] (3, 0.85) circle [radius = 0.05];
    \draw[green!40!black] (3, 0.95) circle [radius = 0.05];
\foreach \x in {0.0625, 0.1875, ..., 0.4375}{
	
	\foreach \y in {1.0625, 1.1875, ..., 1.4375}{
		
		\draw [green!40!black] (\x, \y) circle [radius = 0.088];
		\draw [green!40!black] (\x + 3, \y - 0.5) circle [radius = 0.088];
		
		}
	
	}

\foreach \x in {0.25, 0.75,  ..., 3.75}{
	\foreach \y in {0.25, 0.75, ..., 3.75}{
		\fill (\x, \y) circle [radius = 0.02];
		
	}

}

\fill [blue] (0.0625, 1.1875) circle [radius = 0.02];

\fill [blue] (3.4375, 0.5625) circle [radius = 0.02];

\draw [brown, thick] (-0.1, 1.0995) -- (-0.1, 1.6) -- (0.4, 1.6) -- (0.4, 2.6) -- (1.6, 2.6) -- (1.6, 2.1) -- (2.6, 2.1) -- (2.6, 1.1) -- (3.6, 1.1) -- (3.6, 0.4755);

	
	\end{tikzpicture}
	\caption{{\bf An instance of $\S^{\star \star}(k, \delta, v, w)$.} Squares in $\S^{\star}(k, v, w)$ are filled with light blue color. The black dotted points lie in $\mathbb L_k^\star$. $v$ (left) and $w$ (right) are indicated as blue dotted points. The red (lattice) path is the LFPP path between $c([v]_k)$ and $c([w]_k)$. The green circles indicate the balls in $\S^{\star \star}(k, \delta, v, w)$. Balls that lie parallel to the brown segments define a chain of ball connecting $v$ and $w$.}
	\label{fig:LQG_covering}
\end{figure}

We will now describe the construction of $\S(\delta, v, 
w)$. In view of the variance bounds from Lemma~\ref{lem:smoothness2} and \eqref{eq:smoothness} and also the bound $\var(h_{\delta}^{\mathcal U}(v)) = O(\log \delta^{-1}) + O_{\mathcal U, \ep}(1)$ obtained from \eqref{eq:variance} and \eqref{eq:coupling1}, we can use Lemma~\ref{lem:gaussian_tail} to deduce the following. 
There exists a positive, absolute constant $C_2$ (which we choose) such that for all $k$ sufficiently large (depending on $(\mathcal U, \epsilon)$) we have
\begin{equation}\label{eq-def-C-2}
\begin{split}
&\P (\min_{u \in V} h_{2^{-k}}^{\mathcal U}(u) < -2C_2 k\log 2) \leq 2^{-3k} \\
\mbox{ and } \qquad &\P(\max_S\max_B\max_{v \in B^*} (\varphi^{\mathcal U, B^{**}}(v) - h_{2^{-k}}^{\mathcal U}(c(S))) > 2C_2 \sqrt{k \log 2}) \leq 2^{-3k}\,,
\end{split}
\end{equation}
where $S$ ranges over all squares in $\mathbb L_k$ and $B$ ranges over all balls of radius 
$2^{-(k + 2)}$ around $S$ that we described in the last 
paragraph. Choose $\delta'$ as the smallest number of the form $2^{-k}$ (where $k \in \N$) such that $\delta' \geq \delta^{1 - 2C_2\gamma}$. Now define
$$E_{\delta'} = \{\min_{u \in V} h_{\delta'}^{\mathcal U}(u) < -2C_2 \log {(1 / \delta')} \} \cup \{\max_S\max_B\max_{v \in B^*} (\varphi^{\mathcal U, B^{**}}(v) - h_{\delta'}^{\mathcal U}(c(S))) > 2C_2 \sqrt{\log (1/ \delta')}\}\,.$$
On the event $E_{\delta'}$, we simply cover the straight line segment $\overline{vw}$ joining $v$ and $w$ with the minimum possible number of $(M_\gamma^{\mathcal U}, 
\delta)$-balls. 
Otherwise (i.e., on $E^c_{\delta'}$) set $\S(\delta, v, w) = \S^{\star \star}(m', \delta, v, w)$ where $m' = \log_2 
(1 / \delta')$. We note that $\S^{\star \star}(m', \delta, v, w)$ is a valid choice for $\S(\delta, v, w)$ on $E^c_{\delta'}$, since from the definition of LQG measure as a weak limit and \eqref{eq-def-C-2}, we have
\begin{equation}\label{eq-become-clear}
M_{\gamma}^{\mathcal U}(A) \leq \e^{\gamma\max_S\max_B\max_{v \in B^*} (\varphi^{{\mathcal U}, B^{**}}(v) - h_{\delta'}^{\mathcal U}(c(S)))}\e^{\gamma h_{\delta'}^{\mathcal U}(c(S))}M_{\gamma}^{\mathcal U, B_T^{**}}(A) \leq \delta^2\,
\end{equation}
whenever $M_{\gamma}^{\mathcal U, B_T^{**}}(A) \leq \delta^2\e^{-\gamma h_{\delta'}^{\mathcal U}(c(S))}\e^{-2C_2\gamma\sqrt{\log(1 / \delta')}}$ 
(which holds by the definitions of $\S^{\star\star}(m', \delta, v, w)$). 

\smallskip

\noindent {\bf Step 2: bounding $\E (|\S(\delta, v, w)|)$.} Denote the $\sigma$-field generated by $\{h_{\delta'}^{\mathcal U}(v): v \in \mathbb 
L_{m'}^\star\}$ as $\mathfrak F_{\delta'}$ and the event $\{\min_{v \in \mathbb L_{m'}^\star} h_{\delta'}^{\mathcal U}(v) \geq -2C_2\log 
(1 / \delta')\}$ as $F_{\delta'}$. We  then have (note that $F_{\delta'}$ is measurable in $\mathfrak F_{\delta'}$ and $F_{\delta'} \cup E_{\delta'}$ contains all possible outcomes)
\begin{align}
&\E(|\S(\delta, v, w)| \big | \mathfrak F_{\delta'} ) \nonumber\\
 \leq& \sum_{S \in \S^\star(m', v, w)} \sum_T \E \big( N^\star(M_{\gamma}^{\mathcal U, B_T^{**}}, \delta\e^{-\gamma h_{\delta'}^{\mathcal U}(c(S)) / 2}\e^{-C_2\gamma\sqrt{m' \log 2}}, T) \big | \mathfrak F_{\delta'} \big)\mathbf 1_{F_{\delta'}}\nonumber\\
 &+ \E \big(N(M_\gamma^{\mathcal U}, \delta, \overline{vw})\mathbf 1_{E_{\delta'}} | \mathfrak F_{\delta'}\big) + \E \big(N(M_\gamma^{\mathcal U}, \delta, [v]_{m'}) | \mathfrak F_{\delta'}\big) + \E \big(N(M_\gamma^{\mathcal U}, \delta, [w]_{m'})|\mathfrak F_{\delta'} \big)\,, \label{eq-covering-decomposition}
\end{align}
where $T$ ranges over all the $16 \times 4$ segments of 
$\partial S$ and $N^\star(M_{\gamma}^{\mathcal U, B_T^{**}}, r, T)$ is the minimum possible number of $(M_{\gamma}^{\mathcal U, B_T^{**}}, r)$-balls contained in $B_T$ that are required to cover 
$T$. By the Markov field property of GFF (see the discussions around \eqref{eq:decomposition}) and 
Observation~\ref{observ:covering} it follows that $M_{\gamma}^{\mathcal U, B_T^{**}}$ is identically distributed as $M_\gamma^{B_T^{**}}$ and is independent with 
$\mathfrak F_{\delta'}$. The latter is identically distributed as $\tfrac{{\delta'}^2}{16}M_\gamma^{\D}$ by 
scale and translation invariance property of GFF. Also on $F_{\delta'}$, 
$$\delta\e^{-\gamma h_{\delta'}^{\mathcal U}(c(S)) / 2}\e^{-C_2\gamma\sqrt{m' \log 2}} < \delta {\delta'}^{-C_2\gamma} \leq \delta'^{(1 - 2C_2\gamma)^{-1} - C_2\gamma} < \delta'\,.$$
We can then apply Proposition~\ref{prop:second_moment} to the first term in the right hand side of \eqref{eq-covering-decomposition} and get
\begin{eqnarray*}
\E|\S(\delta, v, w)| &\leq& O_{\gamma}((\delta /\delta')^{-1 - C\gamma/2})\e^{C_2\gamma\sqrt{m'\log 2}(1 + C\gamma/2)}\E \Big(\sum_{S \in \S^\star(m', v, w)} \e^{\gamma (1 + C\gamma)h_{\delta'}^{\mathcal U}(c(S)) / 2}\Big) \\
&& + \E N(M_\gamma^{\mathcal U}, \delta, \overline{vw})\mathbf 1_{E_{\delta'}} +\mbox{ }\E N(M_\gamma^{\mathcal U}, \delta, [v]_{m'}) + \E N(M_\gamma^{\mathcal U}, \delta, [w]_{m'})\,.
\end{eqnarray*}
By Lemma~\ref{lem:star_lfpp}, The first term on the right hand side is at most
\begin{eqnarray*}
O_\gamma(\delta^{-2C_2\gamma(1 + C\gamma/2)})
O_{\gamma, \mathcal U, \epsilon}(1){\delta'}^{-1 + \Omega(\gamma^{4/3} / \log \gamma^{-1})} &=& O_{\gamma, \mathcal U, \epsilon}(\delta^{-2C_2\gamma(1 + C\gamma/2)})\delta^{2C_2\gamma}\delta^{-1 + \Omega(\gamma^{4/3} / \log \gamma^{-1})}\\
&=& O_{\gamma, \mathcal U, \epsilon}(\delta^{-1 + \Omega(\gamma^{4/3} / \log \gamma^{-1})})\,.
\end{eqnarray*}
The second term is $O(1)$ as a consequence of \eqref{eq-def-C-2}, Proposition~\ref{cor:second_moment} and the Cauchy-Schwarz inequality (similar to \eqref{eq:display_cs1} and \eqref{eq:display_cs2}). The last two terms are $O_{\gamma, \mathcal U, \epsilon}(\delta^{-O(\gamma)})$ by 
Proposition~\ref{cor:second_moment}. Adding up these four terms, we get the required bound on $\E|\S(\delta, v, w)|$. This concludes the proof of Theorem~\ref{thm:LQG}.
\end{proof}

\section{Adapting to discrete Gaussian free field} 
\label{sec-discrete}
In this section, we briefly explain how the proof of Theorem~\ref{thm:main} can be adapted to prove Theorem~\ref{thm-discrete} with minor modifications.
Let $N = 2^n$, $V_N^{\Gamma} \equiv ([0, \Gamma N - 1] \times [0 , N-1])  \cap \Z^2$ and $V_N^{\Gamma, \epsilon} = ([-\lfloor \epsilon \Gamma N \rfloor, \Gamma N + \lfloor \epsilon \Gamma N \rfloor - 1] \times [-\lfloor \epsilon N\rfloor, N + \lfloor \epsilon N\rfloor + 1]) \cap \Z^2$. Consider a discrete Gaussian free field $\{\eta_{N}(v): v \in V_N^{\Gamma, \epsilon}\}$ on $V_N^{\Gamma, \epsilon}$ with Dirichlet boundary condition. By interpolation we can extend $\eta_{N}$ to a continuous process on the rectangle $[-\epsilon \Gamma N, (1 + \epsilon)\Gamma N] \times [-\epsilon N, (1 + \epsilon)N]$. After appropriate scaling we then get a continuous Gaussian process $\tilde \eta_{N}$ on the domain $V^{\Gamma, \epsilon} = (-\epsilon \Gamma, (1 + \epsilon)\Gamma) \times (-\epsilon, 
(1 + \epsilon))$. It is clear that we need to find a suitable decomposition for the covariance kernel of $\eta_{N}$ in order to get a decomposition of $\tilde \eta_{N}$ similar to the white noise 
decomposition of $\eta_\delta$. The covariance between $\eta_{N}(v)$ and $\eta_{N}(w)$ is given by  Green's function (for the simple random walk)
$G_{V_N^{\Gamma, \epsilon}}(v, w)$. There is a simple representation of $G_{V_N^{\Gamma, \epsilon}}(\cdot, \cdot)$ as 
a sum of simple random walk probabilities. However here we represent it in terms of \emph{lazy} simple random walk probabilities for reasons that would become clear 
shortly. To this end we write
\begin{equation*}
\label{eq:green_decompose1}
G_{V_N^{\Gamma, \epsilon}}(v, w) = \frac{1}{2}\sum_{t = 0}^\infty \P^v(S_t = w, \tau_\epsilon > t)\,,
\end{equation*}
where $\{S_t\}_{t \geq 0}$ is a lazy simple random walk on $\Z^2$ i.e. it stays put for each step with probability $\tfrac{1}{2}$ and jumps to each of its four neighbors with probability $\tfrac{1}{8}$, $\P^v$ is the measure corresponding to the random walk starting from $v$ and $\tau_\epsilon$ is the first time the random walk hits $\partial 
V_N^{\Gamma, \epsilon}$. Emulating our approach to the approximation of circle average process with $\eta_\delta$, we replace $\tau_\epsilon$ in the above representation with the order of its expectation i.e. $N^2$ (on $V_N^\Gamma$, of course) and obtain a new kernel:
\begin{equation*}
\label{eq:green_decompose2}
K_N(v, w) = \frac{1}{2}\sum_{t = 0}^{N^2 - 1} \P^v(S_t = w)\,.
\end{equation*}
Notice that, thanks to the laziness of $(S_t)$, each matrix $(\P^v(S_t = w))_{v, w \in 
V_N^{\Gamma, \epsilon}}$ is non-negative definite. The similarity between this expression and the integral representation of $\cov(\eta_\delta(v), \eta_\delta(w))$ prompts the following decomposition of $K_N(\cdot, \cdot)$:
\begin{equation*}
\label{eq:green_decompose3}
K_{N}(v, w) = \sum_{k \in [n]}\frac{1}{2}\sum_{4^{k-1} \leq t < 4^{k}}\P^v(S_t = w) = \sum_{k \in [n]}K_{N, k}(v, w)\,.
\end{equation*}
Hence we can ``approximate'' $\tilde\eta_{N}$ with a sum of independent, translation invariant processes $\Delta \tilde \eta_{N, k}$ on $V^{\Gamma}$ where the covariance kernel of $\Delta \tilde \eta_{N, k}$ is given by 
$K_{N, k}$ (after appropriate scaling of the arguments). Denote $\tilde \eta_{N, k} = \sum_{j \in 
[k]} \Delta \tilde \eta_{N, j}$. It is immediate that the sequence of processes $\tilde \eta_{N, k}$'s are translation invariant and have independent increments. 
Using standard results on discrete planar random walk and local central limit theorem estimates (see, e.g.,  \cite[Chapters~2 and 4]{Lawler10}) one can also prove the following properties:
\begin{enumerate}[(a)]
\item $\var(\Delta \tilde \eta_{N, k}( \tilde v)) = O(1)$ and $\var(\Delta \tilde \eta_{N, k}(\tilde v) - \Delta \tilde \eta_{N, k}(\tilde w)) = 4^{n - k}O(| \tilde v - \tilde w|^2)$ for all $\tilde v, \tilde w \in V^{\Gamma, \epsilon}$ (compare this to Lemma~\ref{lem:smoothness}).
\item For any straight line segment $L$ of length at most $\Gamma2^{k - n}$, $\var(\int_{L} \Delta \tilde \eta_{N, k}(z)|dz|) = 4^{k - 
n}{O}(\|L\| 2^{n-k})$. Here $\|L\|$ is the length of $L$. Furthermore if $v \in \R^2$ is orthogonal to $L$, then 
$$\var\Big(\int_{L} \Delta \tilde \eta_{N, k}(z)|dz| - \int_{L + \nu} \Delta \tilde \eta_{N, k}(z)|dz|\Big) = 4^{k - n}\Theta(\|L\| 2^{n-k}) = 2^{k - n}\Theta(\|L\|)\,,$$
whenever $\|L\| \geq 2^{k - n}$ and $|\nu| = \Theta(1)$. Compare this to a similar estimate derived in the proof of Lemma~\ref{lem:convex_variance} and also to the Properties~(b) and (c) of the process $\zeta$ discussed in Section~\ref{sec-history}.
\end{enumerate}
We can now use strategies similar to those used for 
constructing $\cross_n$. Since the processes $\tilde \eta_{N, k}$'s do not have rotational invariance, we will actually construct crossings in all possible directions at any given scale (through appropriately scaled rectangles) and consider the \emph{maximum 
expected weight} of these crossings. In view of Properties~(a) and (b), we can then obtain a recursion relation like \eqref{eq:cost_bound_strat2} on 
the maximum expected weight without any significant 
change in the analysis. Next we build a (lattice) crossing $P_n^\star$ of $\tfrac{1}{N}V_N^\Gamma$ from the crossing $P_n$ which we constructed for $V^\Gamma$ so that 
$$\E \big(\sum_{\tilde v \in P_n^\star}\e^{\gamma \tilde \eta_{N, n}(\tilde v)}\big) = O_{\gamma, \epsilon}(N^{1 - \Omega(\gamma^{4/3} / \log \gamma^{-1})})\,.$$ 
We can do this by following the procedure detailed in the proof of Lemma~\ref{lem:star_lfpp}. 
Indeed, one can show by straightforward computation that
\begin{equation}
\label{eq:smoothness2}
\var( \tilde \eta_{N, n}(\tilde v) -  \tilde \eta_{N, n}(\tilde w)) = O(|\tilde v - \tilde w|N)\,
\end{equation}
for all $|\tilde v - \tilde w| \leq 1/N$ (compare 
this to \eqref{eq:smoothness}).
\eqref{eq:smoothness2} is enough for the arguments employed in the proof of Lemma~\ref{lem:star_lfpp} to work smoothly. The approximation of $\tilde \eta_{N}$ with $\tilde \eta_{N, n}$ can be tackled in a similar way as we tackled the approximation of $h_\delta$ with $\eta_{2^{-\lfloor \log_2 \delta \rfloor}}$ in 
Section~\ref{sec:main_thm}. Once we have bounds on expected weights of crossings between shorter boundaries of rectangles at all scales, we can use such crossings to build a light path connecting any two given points in $V_N$ (we discussed this idea in 
Section~\ref{sec:main_thm} in greater detail). This 
leads to a proof of Theorem~\ref{thm-discrete}. 


\begin{thebibliography}{10}

\bibitem{A90}
R.~J. Adler.
\newblock An introduction to continuity, extrema and related topics for general
  gaussian processes.
\newblock 1990.
\newblock Lecture Notes - Monograph Series. Institute Mathematical Statistics,
  Hayward, CA.

\bibitem{AB14}
J.~Ambj{\o}rn and T.~G. Budd.
\newblock Geodesic distances in {L}iouville quantum gravity.
\newblock {\em Nuclear Phys. B}, 889:676--691, 2014.

\bibitem{ANRBW98}
J.~Ambj{\o}rn, J.~L. Nielsen, J.~Rolf, D.~Boulatov, and Y.~Watabiki.
\newblock The spectral dimension of 2d quantum gravity.
\newblock {\em Journal of High Energy Physics}, 1998(02):010, 1998.

\bibitem{APS18}
J.~Aru, E.~Powell, and A.~Sep{\'u}lveda.
\newblock Critical {L}iouville measure as a limit of subcritical measures.
\newblock Preprint, available at \verb|https://arxiv.org/abs/1802.08433|.

\bibitem{ADH15}
A.~Auffinger, M.~Damron, and J.~Hanson.
\newblock 50 years of first passage percolation.
\newblock Preprint, available at \verb|http://arxiv.org/abs/1511.03262|.

\bibitem{Benjamini}
I.~Benjamini.
\newblock Random planar metrics.
\newblock In {\em Proceedings of the {I}nternational {C}ongress of
  {M}athematicians. {V}olume {IV}}, pages 2177--2187. Hindustan Book Agency,
  New Delhi, 2010.

\bibitem{berestycki16}
N.~Berestycki.
\newblock Introduction to the gaussian free field and liouville quantum
  gravity.
\newblock Available at
  \url{http://www.statslab.cam.ac.uk/~beresty/Articles/oxford4.pdf}.

\bibitem{B14}
N.~Berestycki.
\newblock Diffusion in planar {L}iouville quantum gravity.
\newblock {\em Ann. Inst. Henri Poincar\'e Probab. Stat.}, 51(3):947--964,
  2015.

\bibitem{Berestycki17}
N.~Berestycki.
\newblock An elementary approach to {G}aussian multiplicative chaos.
\newblock {\em Electron. Commun. Probab.}, 22:Paper No. 27, 12, 2017.

\bibitem{BGRV14}
N.~Berestycki, C.~Garban, R.~Rhodes, and V.~Vargas.
\newblock K{PZ} formula derived from {L}iouville heat kernel.
\newblock {\em J. Lond. Math. Soc. (2)}, 94(1):186--208, 2016.

\bibitem{BDG16}
M.~Biskup, J.~Ding, and S.~Goswami.
\newblock Return probability and recurrence for the random walk driven by
  two-dimensional gaussian free field.
\newblock Preprint, available at \verb|https://arxiv.org/abs/1611.03901|.

\bibitem{BDZ14}
M.~Bramson, J.~Ding, and O.~Zeitouni.
\newblock Convergence in law of the maximum of the two-dimensional discrete
  {G}aussian free field.
\newblock {\em Comm. Pure Appl. Math.}, 69(1):62--123, 2016.

\bibitem{BZ10}
M.~Bramson and O.~Zeitouni.
\newblock Tightness of the recentered maximum of the two-dimensional discrete
  {G}aussian free field.
\newblock {\em Comm. Pure Appl. Math.}, 65:1--20, 2011.

\bibitem{FM09}
F.~David and M.~Bauer.
\newblock Another derivation of the geometrical {KPZ} relations.
\newblock {\em J. Stat. Mech. Theory Exp.}, (3):P03004, 9, 2009.

\bibitem{DD16}
J.~Ding and A.~Dunlap.
\newblock Liouville first passage percolation: subsequential scaling limits at
  high temperatures.
\newblock Preprint, available at \verb|http://arxiv.org/abs/1605.04011|.

\bibitem{DG15}
J.~Ding and S.~Goswami.
\newblock First passage percolation on the exponential of two-dimensional
  branching random walk.
\newblock {\em Electron. Commun. Probab.}
\newblock accepted.

\bibitem{DG16}
J.~Ding and S.~Goswami.
\newblock Liouville first passage percolation: the weight exponent is strictly
  less than 1 at high temperature.
\newblock Preprint, available at \verb|https://arxiv.org/abs/1605.08392|.

\bibitem{DL16}
J.~Ding and L.~Li.
\newblock Chemical distances for level-set percolation of two-dimensional
  discrete gaussian free fields.
\newblock Preprint, available at \verb|http://arxiv.org/abs/1605.04449|.

\bibitem{DRZ15}
J.~Ding, R.~Roy, and O.~Zeitouni.
\newblock Convergence of the centered maximum of log-correlated gaussian
  fields.
\newblock Preprint, to appear in Ann. Probab.

\bibitem{DZZ17}
J.~Ding, O.~Zeitouni, and F.~Zhang.
\newblock On the {L}iouville heat kernel for k-coarse {MBRW} and
  nonuniversality.
\newblock 2017.
\newblock Preprint, arXiv:1701.01201.

\bibitem{DZ16}
J.~Ding and F.~Zhang.
\newblock Liouville first passage percolation: geodesic dimension is strictly
  larger than 1 at high temperatures.
\newblock Preprint, available at \verb|https://arxiv.org/abs/1610.02766|.

\bibitem{DZ15}
J.~Ding and F.~Zhang.
\newblock Non-universality for first passage percolation on the exponential of
  log-correlated gaussian fields.
\newblock {\em Probability Theory and Related Fields}, 2017.

\bibitem{DRSV14}
B.~Duplantier, R.~Rhodes, S.~Sheffield, and V.~Vargas.
\newblock Critical {G}aussian multiplicative chaos: convergence of the
  derivative martingale.
\newblock {\em Ann. Probab.}, 42(5):1769--1808, 2014.

\bibitem{DS11}
B.~Duplantier and S.~Sheffield.
\newblock Liouville quantum gravity and {KPZ}.
\newblock {\em Invent. Math.}, 185(2):333--393, 2011.

\bibitem{GRV13}
C.~Garban, R.~Rhodes, and V.~Vargas.
\newblock {L}iouville {B}rownian motion.
\newblock Preprint, available at \verb|http://arxiv.org/abs/1301.2876|.

\bibitem{GRV14}
C.~Garban, R.~Rhodes, and V.~Vargas.
\newblock On the heat kernel and the {D}irichlet form of {L}iouville {B}rownian
  motion.
\newblock {\em Electron. J. Probab.}, 19:no. 96, 25, 2014.

\bibitem{GK12}
G.~R. Grimmett and H.~Kesten.
\newblock Percolation since {S}aint-{F}lour.
\newblock In {\em Percolation theory at {S}aint-{F}lour}, Probab. St.-Flour,
  pages ix--xxvii. Springer, Heidelberg, 2012.

\bibitem{GHS16}
E.~Gwynne, N.~Holden, and X.~Sun.
\newblock A distance exponent for {L}iouville quantum gravity.
\newblock 2016.
\newblock Preprint, available at \url{http://arxiv.org/abs/1606.01214}.

\bibitem{HMPeres2010}
X.~Hu, J.~Miller, and Y.~Peres.
\newblock Thick points of the {G}aussian free field.
\newblock {\em Ann. Probab.}, 38(2):896--926, 2010.

\bibitem{Kahane85}
J.-P. Kahane.
\newblock Sur le chaos multiplicatif.
\newblock {\em Ann. Sci. Math. Qu\'ebec}, 9(2):105--150, 1985.

\bibitem{Lawler10}
G.~F. Lawler and V.~Limic.
\newblock {\em Random walk: a modern introduction}, volume 123 of {\em
  Cambridge Studies in Advanced Mathematics}.
\newblock Cambridge University Press, Cambridge, 2010.

\bibitem{L01}
M.~Ledoux.
\newblock The concentration of measure phenomenon, volume 89 of mathematical
  surveys and monographs.
\newblock 2001.
\newblock American Mathematical Society, Providence, RI.

\bibitem{LW16}
T.~Lupu and W.~Werner.
\newblock The random pseudo-metric on a graph defined via the zero-set of the
  gaussian free field on its metric graph.
\newblock Preprint, available at \verb|https://arxiv.org/abs/1607.06424|.

\bibitem{Madaule15}
T.~Madaule.
\newblock Maximum of a log-correlated {G}aussian field.
\newblock {\em Ann. Inst. Henri Poincar\'e Probab. Stat.}, 51(4):1369--1431,
  2015.

\bibitem{MRVZ14}
P.~Maillard, R.~Rhodes, V.~Vargas, and O.~Zeitouni.
\newblock Liouville heat kernel: regularity and bounds.
\newblock 2014.
\newblock Preprint, available at \verb|http://arxiv.org/abs/1406.0491|.

\bibitem{MS15b}
J.~Miller and S.~Sheffield.
\newblock Liouville quantum gravity and the brownian map {I}: The {QLE}(8/3,0)
  metric.
\newblock Preprint, available at \verb|http://arxiv.org/abs/1507.00719|.

\bibitem{MS16}
J.~Miller and S.~Sheffield.
\newblock Liouville quantum gravity and the brownian map {II}: The {QLE}(8/3,0)
  metric.
\newblock Preprint, available at \verb|https://arxiv.org/abs/1605.03563|.

\bibitem{MS16b}
J.~Miller and S.~Sheffield.
\newblock Liouville quantum gravity and the brownian map {III}: the conformal
  structure is determined.
\newblock Preprint, available at \verb|https://arxiv.org/abs/1608.05391|.

\bibitem{MS15}
J.~Miller and S.~Sheffield.
\newblock Quantum {L}oewner evolution.
\newblock {\em Duke Math. J.}, 165(17):3241--3378, 2016.

\bibitem{P81}
A.~M. Polyakov.
\newblock Quantum geometry of bosonic strings.
\newblock {\em Phys. Lett. B}, 103(3):207--210, 1981.

\bibitem{RV16}
R.~Rhodes and V.~Vargas.
\newblock Lecture notes on gaussian multiplicative chaos and liouville quantum
  gravity.
\newblock Preprint, available at \verb|http://arxiv.org/abs/1602.07323|.

\bibitem{RV17}
R.~Rhodes and V.~Vargas.
\newblock The tail expansion of {G}aussian multiplicative chaos and the
  {L}iouville reflection coefficient.
\newblock Preprint, available at \verb|https://arxiv.org/abs/1710.02096|.

\bibitem{RV11}
R.~Rhodes and V.~Vargas.
\newblock K{PZ} formula for log-infinitely divisible multifractal random
  measures.
\newblock {\em ESAIM Probab. Stat.}, 15:358--371, 2011.

\bibitem{RV14}
R.~Rhodes and V.~Vargas.
\newblock Gaussian multiplicative chaos and applications: a review.
\newblock {\em Probab. Surv.}, 11:315--392, 2014.

\bibitem{Shamov16}
A.~Shamov.
\newblock On {G}aussian multiplicative chaos.
\newblock {\em J. Funct. Anal.}, 270(9):3224--3261, 2016.

\bibitem{S07}
S.~Sheffield.
\newblock Gaussian free fields for mathematicians.
\newblock {\em Probab. Theory Related Fields}, 139(3-4):521--541, 2007.

\bibitem{Watabiki93}
Y.~Watabiki.
\newblock Analytic study of fractal structure of quantized surface in
  two-dimensional quantum gravity.
\newblock {\em Progress of Theoretical Physics Supplement}, 1993(114):1--17,
  1993.

\end{thebibliography}
\end{document}